\crefname{ineq}{inequality}{inequalities}
\newtheorem{theorem}{Theorem}
\newtheorem{proposition}[theorem]{Proposition}
\newtheorem{lemma}[theorem]{Lemma}
\newtheorem{corollary}[theorem]{Corollary}
\newtheorem{conjecture}[theorem]{Conjecture}
\theoremstyle{definition}
\newtheorem{definition}[theorem]{Definition}
\newtheorem{example}[theorem]{Example}
\newtheorem{remark}[theorem]{Remark}
\newtheorem*{note}{Note}
\newcommand{\defin}[1]{\textcolor{blue}{\emph{#1}}}
\newcommand{\setN}{\mathbb{N}}
\newcommand{\setZ}{\mathbb{Z}}
\newcommand{\setC}{\mathbb{C}}
\newcommand{\xvec}{\mathbf{x}}
\newcommand{\powerSum}{\mathrm{p}}
\newcommand{\schurS}{\mathrm{s}}
\newcommand{\thsup}{\textnormal{th}}
\newcommand{\rdsup}{\textnormal{rd}}
\newcommand{\fbound}{B}
\newcommand{\SSYT}{\mathrm{SSYT}}
\newcommand{\SYT}{\mathrm{SYT}}
\newcommand{\SYTT}{\mathrm{SYTT}}
\newcommand{\BST}{\mathrm{BST}}
\newcommand{\COF}{\mathrm{COF}}
\newcommand{\GL}{\mathrm{GL}}
\newcommand{\symS}{\mathfrak{S}}
\newcommand{\gl}{\mathfrak{gl}}
\DeclareMathOperator{\st}{st} 
\DeclareMathOperator{\shape}{sh}
\DeclareMathOperator{\Hom}{Hom}
\DeclareMathOperator{\End}{End}
\newcommand{\Perm}{\mathcal{P}} 
\newcommand{\Alt}[2]{\mathcal A_{#2}^{(#1)}} 
\newcommand{\alt}{\mathcal{A}} 
\newcommand{\wgt}{\mu} 
\DeclareMathOperator{\rot}{rot} 
\DeclareMathOperator{\pr}{pr} 
\def\longcycle{(1,\dots,n)}
\def\tocyclic{\!\downarrow_{\langle\longcycle\rangle}} 
\DeclareMathOperator{\height}{height}
\DeclareMathOperator{\maj}{maj}
\newcommand{\qbinom}{\genfrac{[}{]}{0pt}{}}
\tikzset{iTrio/.pic={%
\begin{scope}[yscale=-1]
\draw[black,x=1em,y=1em] (0,0)--(1,0)--(1,3)--(0,3)--(0,0);
\end{scope}
}}
\tikzset{hTrio/.pic={%
\begin{scope}[yscale=-1]
\draw[black,x=1em,y=1em] (0,0)--(0,1)--(3,1)--(3,0)--(0,0);
\end{scope}
}}
\tikzset{jTrio/.pic={%
\begin{scope}[yscale=-1]
\draw[black,x=1em,y=1em] (1,0)--(2,0)--(2,2)--(0,2)--(0,1)--(1,1)--(1,0);
\end{scope}
}}
\tikzset{tTrio/.pic={%
\begin{scope}[yscale=-1]
\draw[black,x=1em,y=1em] (0,0)--(2,0)--(2,1)--(1,1)--(1,2)--(0,2)--(0,0);
\end{scope}
}}
\title{Skew characters and cyclic sieving}
\thanks{
Alexandersson was partly supported by the Swedish Research Council (Vetenskapsrådet), grant 2015-05308.
Pfannerer and Rubey were supported by the the Austrian Science Fund (FWF): P 29275. Pfannerer is a recipient of a DOC Fellowship of the Austrian Academy of Sciences.}
\author{Per Alexandersson}
\email{per.w.alexandersson@gmail.com}
\address{Department of Mathematics,
Stockholm University,
S-10691, Stockholm, Sweden}
\author{Stephan Pfannerer}
\email{stephan.pfannerer@tuwien.ac.at}
\address{Fakultät für Mathematik und Geoinformation, TU Wien, Austria}
\author{Martin Rubey}
\email{martin.rubey@tuwien.ac.at}
\address{Fakultät für Mathematik und Geoinformation, TU Wien, Austria}
\author{Joakim Uhlin}
\email{joakim\_uhlin@hotmail.com}
\address{Department of Mathematics,
Stockholm University,
S-10691, Stockholm, Sweden}
\begin{document}

\begin{abstract}
  In 2010, B.~Rhoades proved that promotion together with the
  fake-degree polynomial associated with rectangular standard Young
  tableaux give an instance of the cyclic sieving phenomenon.

  We extend this result to all skew standard Young tableaux where the
  fake-degree polynomial evaluates to nonnegative integers at roots
  of unity, albeit without being able to specify an explicit group
  action.  Put differently, we determine in which cases a skew
  character of the symmetric group carries a permutation
  representation of the cyclic group.

  We use a method proposed by N.~Amini and the first author, which
  amounts to establishing a bound on the number of border-strip
  tableaux of skew shape.

  Finally, we apply our results to the invariant theory of tensor
  powers of the adjoint representation of the general linear group.
  In particular, we prove the existence of a bijection between
  permutations and J.~Stembridge's alternating tableaux, which
  intertwines rotation and promotion.
\end{abstract}

\maketitle

\setcounter{tocdepth}{1}
\tableofcontents

\pagebreak
\section{Introduction}
We determine which tensor powers of a skew character $\chi^{\lambda/\mu}$ of the symmetric group $\symS_n$ carry a permutation representation of the cyclic group of order $n$.

This problem can be rephrased in terms of V.~Reiner, D.~Stanton \&
D.~White's \emph{cyclic sieving
  phenomenon}~\cite{ReinerStantonWhite2004}.  Let $\SYT(\lambda/\mu)$
be the set of standard Young tableaux of skew shape $\lambda/\mu$,
and let $f^{\lambda/\mu}(q)$ be G.~Lusztig's fake degree polynomial
for $\chi^{\lambda/\mu}$.

Then there \emph{exists} an action $\rho$ of the cyclic group of
order $n=|\lambda/\mu|$ such that
\begin{equation*}
  \left(\underbrace{\SYT(\lambda/\mu)\times\dots\times\SYT(\lambda/\mu)}_m, \langle \rho \rangle, %
    f^{\lambda/\mu}(q)^m\right)
\end{equation*}
exhibits the cyclic sieving phenomenon, if and only if
$f^{\lambda/\mu}$ evaluates to nonnegative integers at $n^\thsup$ root
of unity.
If $m$ is even this is always the case.  If $m$ is odd, this is the
case if and only if there exists a tiling of $\lambda/\mu$ with
border-strips of size $k$ of even height for every $k\mid n$, see \cref{prop:positive-cyclic-sieving}.

We also show that for any skew shape $\lambda/\mu$ and integer $s>0$ there is an action $\tau$ of the cyclic group of order $s$ on \emph{stretched shapes} such that
\begin{equation*}
  \left(\SYT(s\lambda/s\mu), \langle \tau \rangle, %
    f^{s\lambda/s\mu}(q)\right)
\end{equation*}
exhibits the cyclic sieving phenomenon, see \cref{thm:stretchedCSPtheorem}.

At this point we are unable to present $\rho$ and $\tau$ explicitly for general skew shapes $\lambda/\mu$.
Instead, we use a characterization of P.~Alexandersson \& N.~Amini~\cite{AlexanderssonAmini2018}, which says that $f\in\setN[q]$ is a cyclic sieving polynomial for a group action of the cyclic group of order $n$, if and only if for a primitive $n^\thsup$ root of unity $\xi$ and all $k\mid n$ we have that $f(\xi^k)\in\setN$ and
\begin{equation*}
\sum_{d\mid k}\mu(k/d)f(\xi^d)\geq 0,
\end{equation*}
where $\mu$ is the number-theoretic Möbius function.

To apply this result, our main tool is a new bound for the absolute value of the skew
character evaluated at a power of the long cycle.  More precisely, with \cref{thm:BST-bound-skew} we
show that for any $k\mid n$
\begin{equation*}
  |f^{\lambda/\mu}(\xi^k)| \ge \sum_{d\mid k, d<k} |f^{\lambda/\mu}(\xi^d)|
\end{equation*}
provided $|f^{\lambda/\mu}(\xi^k)|\ge 2$.

To do so, we note that
$|f^{\lambda/\mu}(\xi^d)| = |\chi^{\lambda/\mu}((m^d))| =
|\BST(\lambda/\mu,m)|$, the number of border-strip tableaux of shape
$\lambda/\mu$ with strips of size $m$, extending the theorems for
straight shapes by T.~Springer~\cite{Springer1974} and G.~James \&
A.~Kerber~\cite{JamesKerber1984}.  We finally approximate the number
of border-strip tableaux using a bound by S.~Fomin \&
N.~Lulov~\cite{FominLulov1997}.

Our main motivation is an implication for the invariant theory of the
general linear group, as we now explain.  Let $\gl_r$ be the adjoint
representation of $\GL_r$, and consider its $n^\thsup$ tensor power
$\gl_r^{\otimes n}$.  The symmetric group $\symS_n$ acts on this space
by permuting tensor positions.  Thus, using Schur--Weyl duality, we
can determine the subspace of $\GL_n$-invariants of
$\gl_r^{\otimes n}$, regarded as a representation of $\symS_n$.  It
turns out to be isomorphic to
\[
  \bigoplus_{\substack{\lambda\vdash n\\\ell(\lambda)\leq r}} S_\lambda\otimes S_\lambda,
\]
where the direct sum is over all partitions of $n$ into at most $r$
parts, and $S_\lambda$ is the irreducible representation of $\symS_n$
corresponding to $\lambda$.  In particular, for $r\geq n$, the
dimension of the space of invariants equals the size of $\symS_n$.

A fundamental question of invariant theory is to find an explicit
basis of the space of invariants, and if possible enjoying further
desirable properties.  One such property is invariance under rotation
of tensor positions, following G.~Kuperberg's idea of web bases~\cite{Kuperberg1996}.

An elegant and useful solution would be to describe a set of
permutations in $\symS_n$ and a bijection from these to the basis
elements which intertwines rotation of permutations (that is,
conjugation with the long cycle) and rotation of tensor positions.
It would be even nicer if the set of permutations for the invariants
of $\gl_r^{\otimes n}$ would be a subset of the set of permutations
for the invariants of $\gl_{r+1}^{\otimes n}$.

Although it appears to be difficult to exhibit such an intertwining
bijection explicitly, our results , combined with previous work
of S.~Pfannerer, M.~Rubey \& B.~Westbury~\cite{PfannererRubeyWestbury2020},
implies that such a solution must exist, see \cref{thm:existence-pr-rot}.

The existence of such an intertwining bijection is closely related to the existence of a rotation invariant statistic $\st$ mapping permutations to partitions, such that $|\{\sigma\in \symS_n: \st(\sigma)=\lambda\}| = |\SYT(\lambda)\times \SYT(\lambda)|$, see \cref{cor:rotationInvatiantStat}.

\subsection{Outline of the paper}

In \cref{sec:csp} we recall the definition of the cyclic
sieving phenomenon and establish the connection with characters of cyclic group actions.
In \crefrange{sec:schur}{sec:borderStrips} we generalize T.~Springer's theorem to skew shapes and show, using the Murnaghan--Nakayma rule, the abacus of G.~James \& A.~Kerber and the Littlewood map, that the character evaluation of a skew character is, up to sign,
equal to a certain number of border-strip tableaux.
We stress that these identities are known for the straight shape case.
However, they are somewhat underappreciated gems
which deserve more attention.

In \cref{sec:BSTBounds} we provide the crucial bound on the number of border-strip
tableaux of given shape, building on the approximation
of S.~Fomin \& N.~Lulov.  In \cref{sec:mainCSP} we use this bound and
the characterization of P.~Alexandersson \& N.~Amini to prove the
existence of the group actions announced above for skew standard tableaux.

Finally in \cref{sec:gln} we apply our results to permutations and the invariant theory of the adjoint representation of the general linear group.

\section{Cyclic group actions and cyclic sieving}\label{sec:csp}

In this section we recall V.~Reiner, D.~Stanton \& D.~White's cyclic sieving
phenomenon, characters of cyclic group actions and a result of
P.~Alexandersson \& N.~Amini characterizing characters arising from cyclic
group actions.  We also recall R.~Brauer's permutation lemma, which
guarantees that two actions of the cyclic group which have the same
character as linear representations are even isomorphic as group actions.

\begin{definition}[\cite{ReinerStantonWhite2004}]
  Let $X$ be a finite set and let $\rho$ be a generator of an
  action of the cyclic group of order $n$ on $X$.

  Given a polynomial $f(q)\in \setN[q]$ we say that the triple
  $(X,\langle\rho\rangle,f(q))$ \defin{exhibits the cyclic sieving
    phenomenon} if for all $d \in \setZ$
  \begin{align}\label{eq:cspDef}
    \#\{ x\in X : \rho^d \cdot x = x \} = f(\xi^d),
  \end{align}
  where $\xi$ is a primitive $n^\thsup$ root of unity.  In this case
  $f(q)$ is a \defin{cyclic sieving polynomial} for the group
  action.
\end{definition}
In particular, the cardinality of $X$ is given by $f(1)$.
More generally, identifying the ring of
characters of the cyclic group of order $n$ with $\setZ[q]/(q^n-1)$,
the cyclic sieving polynomial $f(q)$ reduces to the character of the
group action modulo $q^n-1$.

The cyclic sieving phenomenon owes its name to the fact that,
mysteriously often, there is a particularly nice cyclic sieving
polynomial, for example a natural $q$-analogue of the counting
formula for the cardinality of $X$ as a function of $n$.  Moreover,
frequently the only known way to prove that a given $q$-analogue
indeed is a cyclic sieving polynomial is by enumerating the number of
fixed points of the group action and verifying that the evaluation of
the polynomial yields the same number.

\begin{remark}\label{rem:smallerGroupCSP}
  If $(X, \langle \rho \rangle, f(q))$ exhibits the cyclic sieving
  phenomenon, then so does $(X, \langle \rho^k \rangle, f(q))$ for
  any $k\in\setN$.  In this case also
  $(X^m, \langle \rho \rangle, f(q)^m)$ exhibits the cyclic sieving
  phenomenon for any $m\in\setN$, where $\langle \rho \rangle$ acts
  on $X^m$ via
  $\rho\cdot (x_1,\dotsc,x_m) = (\rho \cdot x_1, \dotsc, \rho \cdot
  x_m)$.
\end{remark}

Much attention has been given to prove cyclic sieving phenomena on certain sets of certain tableau objects.
Most famously, B.~Rhoades showed that $\SYT(a^b)$ exhibits the cyclic sieving phenomenon with the
group action of promotion and $f^\lambda(q)$ as cyclic sieving polynomial.
There are now several alternative proofs of this result, notably
\cite{Purbhoo2013,PetersenPylyavskyyRhoades2008} and \cite{ShenWeng2018}.
For an overview of some of these approaches, see \cite{Rhee2019}
\begin{table}[!ht]
	\centering
	\begin{tabular}{llllllc}
		\toprule
		Set &\phantom{}& Group action &\phantom{}&  Statistic/$f(q)$ &\phantom{}& Reference\\
		\midrule
		$\SYT(a^b)$ && Promotion && $\maj$ && \cite{Rhoades2010}\\
 		$\SYT((n-m,1^m))$ && Promotion\textsuperscript{\textdagger} && $\qbinom{n-1}{m}_q$ && \cite{BennettMadillStokke2014}\\
 		$\SYT(\lambda)$ && Evacuation\textsuperscript{\ddag} && $\maj$ && \cite{Stembridge1996}\\
		$\SSYT(a^b,k)$ && $k$-promotion && $q^{-\kappa(\lambda)}\schurS_{a^b}(1,q,\dotsc,q^{k-1})$ && \cite{Rhoades2010}\\
		$\COF(n \lambda/ n\mu)$ && Cyclic shift && (variant of) $\maj$ && \cite{AlexanderssonUhlin2019}\\
		\bottomrule
	\end{tabular}
	\caption{Summary of known cyclic sieving phenomena on tableau objects.
	\textsuperscript{\textdagger}Note that the group action on hook shaped SYT has order $n-1$.
	\textsuperscript{\ddag}Evacuation is an involution, so the cyclic group has order two.}

\end{table}

It turns out that it is possible to determine whether a polynomial
carries a cyclic group action of given order.
\begin{theorem}[{\cite[Thm. 2.7]{AlexanderssonAmini2018}}]\label{thm:alexanderssonAmini}
  Let $f(q)\in \setN[q]$ and suppose that $f(\xi^d)\in\setN$ for all
  $d\in\{1,\dots,n\}$, where $\xi$ is a primitive $n^\thsup$ root of
  unity.  Let $X$ be any set of size $f(1)$.  Then there exists a
  cyclic group action $\rho$ of order $n$ such that
  $(X,\langle\rho\rangle,f(q))$ exhibits the cyclic sieving phenomenon if and only if for
  every $k\mid n$,
  \[
  \sum_{d \mid k} \mu(k/d) f(\xi^d)\geq 0,
  \]
  where $\mu$ is the number-theoretic Möbius function.
\end{theorem}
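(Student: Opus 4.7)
My plan is to prove both directions of the equivalence by matching cyclic group actions with orbit-count data via Möbius inversion. The main obstacle will be showing \emph{integrality} of the candidate orbit counts in the ``if'' direction.

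For the ``only if'' direction, given the cyclic action $\rho$ with the stated CSP, I would decompose $X$ into $\rho$-orbits and let $a_k$ denote the number of orbits of size $k$ for each $k\mid n$. Since $\rho^d$ fixes an element exactly when its orbit size divides $d$, the CSP identity \eqref{eq:cspDef} reads $f(\xi^d)=\sum_{k\mid d}k\cdot a_k$ for $d\mid n$. Möbius inversion over divisors then yields $\sum_{d\mid k}\mu(k/d)f(\xi^d)=k\cdot a_k\geq 0$, as required.

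For the ``if'' direction, I would \emph{define} $a_k:=\frac{1}{k}\sum_{d\mid k}\mu(k/d)f(\xi^d)$ for each $k\mid n$. Non-negativity is immediate from the hypothesis; swapping the order of summation and using $\sum_{e\mid n/d}\mu(e)=[n=d]$ reduces $\sum_{k\mid n}k\cdot a_k$ to $f(\xi^n)=f(1)=|X|$. Granted integrality, I would then exhibit the desired action by partitioning $X$ into $a_k$ blocks of size $k$ for each $k\mid n$ and letting $\rho$ act on each $k$-block as a $k$-cycle. By construction $\rho^d$ has exactly $\sum_{k\mid d}k\cdot a_k=f(\xi^d)$ fixed points, establishing the CSP.

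The main obstacle is integrality of $a_k$, and my approach here is representation-theoretic. I would view $\chi(g^d):=f(\xi^d)$ as a virtual character of $\setZ/n\setZ$. Writing $f\bmod(q^n-1)=\sum_{i=0}^{n-1}c_iq^i$ with $c_i\in\setN$, one has $\chi=\sum_i c_i\psi_i$ where $\psi_i(g):=\xi^i$. The hypothesis $\chi(g^d)\in\setN$ for \emph{every} $d$ (not merely $d\mid n$) forces Galois-invariance, so the $c_i$ are constant on Galois orbits, and hence $\chi=\sum_{m\mid n}\tilde c_m\chi_m$ with $\tilde c_m\in\setN$, where $\chi_m$ is the $\mathbb{Q}$-irreducible character summing the $\psi_i$ of order $m$. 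Letting $\pi_k$ denote the transitive permutation character $\pi_k(g^d):=k\cdot[k\mid d]$, the identity $\pi_k=\sum_{j\mid k}\chi_j$ inverts by Möbius to $\chi_m=\sum_{k\mid m}\mu(m/k)\pi_k$, whence $\chi=\sum_{k\mid n}a_k\pi_k$ with $a_k=\sum_{m:\,k\mid m\mid n}\mu(m/k)\tilde c_m\in\setZ$. A direct check with Ramanujan sums confirms this $a_k$ agrees with the Möbius-inverted formula defined above, completing the argument.
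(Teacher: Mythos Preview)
The paper does not supply its own proof of this theorem; it is quoted from \cite{AlexanderssonAmini2018}, and the surrounding \cref{rmk:orbit-sizes} merely sketches the ``only if'' direction (orbit counting plus M\"obius inversion) and points to \cite[Lem.~2.5]{AlexanderssonAmini2018} for the key divisibility/integrality statement. So there is no in-paper argument to compare against beyond that remark.

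Your argument is correct. The ``only if'' direction is exactly the computation recorded in \cref{rmk:orbit-sizes}. For the ``if'' direction, the construction of $\rho$ from prescribed orbit counts $a_k$ is straightforward once $a_k\in\setN$ is known, and your reduction of integrality to rationality of the class function $\chi$ is sound: since $f(q)\in\setN[q]$, the reduction $f\bmod(q^n-1)$ has nonnegative integer coefficients $c_i$, and the hypothesis $f(\xi^d)\in\setN$ for \emph{all} $d\in\{1,\dots,n\}$ (not merely $d\mid n$) makes $\chi$ rational-valued, hence Galois-invariant, forcing the $c_i$ to be constant on orbits of $(\setZ/n\setZ)^\times$. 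The identity $\pi_k=\sum_{j\mid k}\chi_j$ and its M\"obius inverse then express $\chi$ integrally in the $\pi_k$, and the resulting coefficients agree with your $a_k$ by a second M\"obius inversion (your Ramanujan-sum check is not even needed: from $\chi=\sum_k a_k\pi_k$ one reads off $f(\xi^d)=\sum_{k\mid d}k\,a_k$ directly, and inverting recovers your defining formula for $a_k$). This is essentially the approach of the cited source, phrased in character-theoretic language.
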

\begin{remark}
  Except for its size, the nature of the set $X$ is irrelevant in
  this theorem.  Put differently, the theorem merely classifies the
  linear characters of the cyclic group of order $n$ which are the
  character of a group action.
\end{remark}
\begin{remark}\label{rmk:orbit-sizes}
  If $(X,\langle\rho\rangle,f(q))$ exhibits the cyclic sieving
  phenomenon, the expression
  \[\frac{1}{k}\sum_{d\mid k} \mu(k/d) f(\xi^d)\]
  is the number of orbits of size $k$ of the group action.
  Therefore, the sum
  \[
    \sum_{d\mid k} \mu(k/d) f(\xi^d)
  \]
  must be nonnegative and divisible by $k$.  The condition that the
  sum is divisible by $k$ follows from the hypothesis that
  $f(q)\in \setN[q]$ and $f(\xi^d)\in\setN$ for all
  $d\in\{1,\dots,n\}$, see~\cite[Lem.~2.5]{AlexanderssonAmini2018}.
\end{remark}
\begin{remark}
 It may be the case that $f(q)\in \setN[q]$ evaluates to nonnegative integers
 at $n^\thsup$ roots of unity, but is not a cyclic sieving polynomial.
 As an example (see~\cite[Ex.~2.10]{AlexanderssonAmini2018}), take $f(q)=q^5+3q^3+q+10$.
 At $6^\thsup$ roots of unity, $f(\xi^j)$ takes nonnegative integer values.
 However, for $k=3$ we have $\sum_{d\mid k} \mu(k/d) f(\xi^d) = -3$.
\end{remark}

We conclude this section by recalling a fact that makes cyclic groups
special.  In general, two non-isomorphic group actions may have the
same linear character. This is not the case for group actions of a
cyclic group, as R.~Brauer's permutation lemma shows:
\begin{theorem}[\cite{Brauer1941,Kovacs1982}]\label{thm:Brauer}
  Two cyclic group actions are isomorphic if and only if they are
  isomorphic as linear representation, that is, their characters coincide.
\end{theorem}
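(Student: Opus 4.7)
My plan is to handle the two directions separately, with only the converse requiring real argument. The forward implication is immediate: any isomorphism of $\langle\rho\rangle$-sets $X\to Y$ transports one permutation representation onto the other, so their characters coincide.

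For the converse I would proceed via orbit structure. Recall that the character of a permutation representation at a group element $g$ equals the number of points it fixes, so equal characters translate to the statement that for every $d\in\setZ$, the elements $\rho^d$ have the same number of fixed points on $X$ and on $Y$. The strategy is to show that this fixed-point data determines the orbit structure of the action, and that the orbit structure in turn determines the action up to isomorphism. The latter is almost by definition: every $\langle\rho\rangle$-orbit of cardinality $k$ (necessarily with $k\mid n$) is $\langle\rho\rangle$-isomorphic to $\langle\rho\rangle/\langle\rho^k\rangle$, so the action is classified by the vector $(a_k)_{k\mid n}$, where $a_k$ denotes the number of orbits of size $k$.

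To recover $(a_k)$ from the character, note that an orbit of size $k$ contributes $k$ fixed points to $\rho^d$ when $k\mid d$ and $0$ otherwise, so
\[
\#\{x\in X : \rho^d\cdot x = x\} = \sum_{k\mid\gcd(d,n)} k\, a_k;
\]
letting $d$ range over the divisors of $n$ yields a triangular system on the divisor lattice, invertible by number-theoretic Möbius inversion. Hence $(a_k)_{k\mid n}$ is determined by the character, and applying the same reasoning to $Y$ and comparing concludes the argument. The approach is elementary and I do not expect a genuine obstacle; the only point to make explicit is that the character values at $\rho,\rho^2,\dots,\rho^n$ depend only on $\gcd(d,n)$, so they collapse to exactly one equation per divisor of $n$ — precisely as many as one needs to recover the $a_k$.
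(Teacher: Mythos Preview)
Your argument is correct. The forward direction is trivial, and for the converse your reduction to orbit structure is exactly right: for a cyclic group of order $n$ there is a unique subgroup of each order $k\mid n$, so transitive actions are classified by their size, and the whole action by the vector $(a_k)_{k\mid n}$. Your fixed-point count $\#\{x:\rho^d\cdot x=x\}=\sum_{k\mid\gcd(d,n)}k\,a_k$ is correct, and the system is indeed upper-triangular on the divisor lattice, inverted by M\"obius inversion.

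As for comparison: the paper does not supply its own proof of this theorem; it simply quotes it as a classical result of Brauer (with a later account by Kov\'acs) and moves on. So there is nothing to compare your approach against. It is worth noting, though, that the paper already records the key formula you need: \cref{rmk:orbit-sizes} states that $\frac{1}{k}\sum_{d\mid k}\mu(k/d)f(\xi^d)$ is the number of orbits of size $k$, which is precisely the M\"obius-inverted version of your fixed-point identity. Your write-up thus makes explicit an argument whose ingredients are already scattered through \cref{sec:csp}.
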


\section{Some properties of skew Schur functions}\label{sec:schur}

In this section we recall some basic properties of skew Schur
functions, the Littlewood--Richardson rule and fake degree polynomials.

Let $\SYT(\lambda/\mu)$ and $\SSYT(\lambda/\mu)$ denote the set of
standard and, respectively, semi-standard Young tableaux of
skew shape $\lambda/\mu$.  We refer to the books by I.~G.~Macdonald~\cite{Macdonald1995}
and R.~Stanley~\cite{StanleyEC2} for definitions.  We use English
notation in all our figures.

Given a skew shape $\lambda/\mu$ with $n$ boxes, the
associated \defin{skew Schur function} $\schurS_{\lambda/\mu}$ is defined as
\begin{equation}\label{eq:skewSchurDef}
 \schurS_{\lambda/\mu}(\xvec) \coloneqq
 \sum_{T \in \SSYT(\lambda/\mu)} \prod_{\square \in \lambda/\mu} x_{T(\square)}.
\end{equation}
This generalizes the ordinary \defin{Schur function} $s_\lambda \coloneqq s_{\lambda/\emptyset}$.
It is well-known that $\{ s_\lambda\}_\lambda$, where $\lambda$
runs over all partitions, is a basis for the ring of symmetric functions.
The \defin{power sum symmetric functions} indexed by partitions are defined as
\begin{equation}\label{eq:powerSumDef}
 \powerSum_\nu(\xvec) \coloneqq \powerSum_{\nu_1}(\xvec)\powerSum_{\nu_2}(\xvec)\dotsm \powerSum_{\nu_\ell}(\xvec),
 \quad \powerSum_{j}(\xvec)\coloneqq  x_1^j+x_2^j+\dotsb.
\end{equation}
The \defin{skew characters} $\chi^{\lambda/\mu}(\nu)$ of the symmetric
group $\symS_n$ are then defined via
\begin{equation}\label{eq:schurPExp}
 \schurS_{\lambda/\mu}(\xvec) = \sum_{\nu} \chi^{\lambda/\mu}(\nu)\frac{\powerSum_\nu(\xvec)}{z_\nu},
\end{equation}
where the sum is over all partitions $\nu$ of the same size as $\lambda/\mu$, $\powerSum_\nu$ denotes a \defin{power sum symmetric function}, $z_\nu =
\prod_j m_j! j^{m_j}$ and $m_j$ is the number of parts in $\lambda$ equal to $j$.

We shall also make use of the \defin{Littlewood--Richardson rule}, (see for example \cite{Macdonald1995,StanleyEC2}):
\begin{equation}\label{eq:lrRule}
 \schurS_{\lambda/\mu}(\xvec) = \sum_{\nu} c^{\lambda}_{\mu,\nu} \schurS_\nu(\xvec)
\end{equation}
where $c^{\lambda}_{\mu\nu} \in \setN$ are the \defin{Littlewood--Richardson coefficients}.
Combining \cref{eq:lrRule} and \cref{eq:schurPExp},
we obtain the Littlewood--Richardson rule for characters,
\begin{equation}\label{eq:characterLRRule}
\chi^{\lambda/\mu} = \sum_{\nu} c^{\lambda}_{\mu,\nu} \chi^{\nu}.
\end{equation}

Although it is very difficult to determine whether a given Littlewood--Richardson coefficient vanishes, the following particular case is straightforward.
\begin{lemma}\label{lemma:vanishing LR-coeffs}
Let $\lambda/\mu$ be a skew shape with $n$ boxes.
Then $c^\lambda_{\mu,(n)}=c^\lambda_{\mu,(1^n)}=0$ if and only if the skew shape $\lambda/\mu$
has some column with at least two boxes, and some row containing at least two boxes.
\end{lemma}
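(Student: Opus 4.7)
The plan is to compute each of the two Littlewood--Richardson coefficients separately, show that each vanishes precisely under one of the two stated conditions, and observe that the simultaneous vanishing of both is exactly the condition in the lemma.

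First I would handle $c^\lambda_{\mu,(n)}$ via the Pieri rule. Since $\schurS_{(n)}$ is the complete homogeneous symmetric function $h_n$, one has
\[
  \schurS_\mu \cdot \schurS_{(n)} = \sum_\lambda \schurS_\lambda,
\]
the sum ranging over $\lambda\supseteq\mu$ with $\lambda/\mu$ a horizontal $n$-strip, i.e., no column of $\lambda/\mu$ contains more than one box. Comparing with \eqref{eq:lrRule} identifies $c^\lambda_{\mu,(n)}$ as $1$ if $\lambda/\mu$ is a horizontal strip and as $0$ otherwise. Equivalently, $c^\lambda_{\mu,(n)}=0$ if and only if some column of $\lambda/\mu$ contains at least two boxes.

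Next I would apply the conjugate Pieri rule. Since $\schurS_{(1^n)}$ is the elementary symmetric function $e_n$,
\[
  \schurS_\mu \cdot \schurS_{(1^n)} = \sum_\lambda \schurS_\lambda
\]
summed over $\lambda$ with $\lambda/\mu$ a vertical $n$-strip, so that $c^\lambda_{\mu,(1^n)}=0$ if and only if some row of $\lambda/\mu$ contains at least two boxes. Combining the two equivalences immediately yields the lemma.

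As an alternative, one can bypass Pieri's rule and argue combinatorially with Littlewood--Richardson tableaux: the only candidate filling of content $(n)$ is the all-ones tableau, which is semi-standard exactly when $\lambda/\mu$ is a horizontal strip; while for content $(1^n)$ the lattice condition forces the reverse reading word to be $1,2,\dots,n$, compatible with weakly increasing rows and strictly increasing columns only if every row of $\lambda/\mu$ contains at most one box. I do not expect a substantive obstacle here: the only care needed is to match the two shape-strip definitions with the column and row conditions stated in the lemma.
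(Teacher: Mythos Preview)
Your argument is correct. The paper takes a closely related but slightly different route: instead of invoking the Pieri rule as a black box, it expands both sides of \eqref{eq:lrRule} in the monomial basis and tracks the coefficient of $x_1^n$. On the left this monomial appears if and only if $\lambda/\mu$ admits an SSYT filled entirely with $1$'s, i.e., is a horizontal strip; on the right only $\schurS_{(n)}$ contributes $x_1^n$, so $c^\lambda_{\mu,(n)}\neq 0$ exactly when no column has two boxes. The paper then applies the involution $\omega$ to obtain the companion statement for $c^\lambda_{\mu,(1^n)}$. In effect the paper rederives the special case of Pieri's rule it needs, whereas you quote it; your version is shorter and more standard, while the paper's is more self-contained. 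Your alternative combinatorial argument with Littlewood--Richardson tableaux also mirrors the paper's second suggested proof via jeu-de-taquin.
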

\begin{proof}
We shall first prove the
statement
\[
c^\lambda_{\mu,(n)}=0 \; \iff \; \lambda/\mu \text{ has some some column with at least two boxes}.
\]
We expand both sides of the Littlewood--Richardson rule~\eqref{eq:lrRule} in the monomial basis.
The left hand side contains the monomial $x_1^n$ if and only if
there is no column of $\lambda/\mu$ with at least two boxes.
Since the only semi-standard Young tableau of straight shape that contains precisely $n$ times the letter $1$ has shape $(n)$, the monomial $x_1^n$ appears in the right hand side if and only if $c^\lambda_{\mu,(n)}\neq 0$.

The statement concerning $c^\lambda_{\mu,(1^n)}$ follows by applying the involution\footnote{With the property that $\omega(s_{\lambda/\mu})=s_{{\lambda'}/{\mu'}}$, see \cite{Macdonald1995,StanleyEC2}.}
$\omega$ on both sides of the Littlewood--Richardson rule~\eqref{eq:lrRule}, which yields
\[
\schurS_{\lambda'/\mu'}(\xvec) = \sum_{\nu} c^{\lambda}_{\mu,\nu} \schurS_{\nu'}(\xvec).
\]
A similar argument as in the previous paragraph now finishes the proof.

\end{proof}

Alternatively, \cref{lemma:vanishing LR-coeffs} can also be proved using the fact that
$c^{\lambda}_{\mu,\nu}$ counts the number of standard Young tableaux of
shape $\lambda/\mu$ which are jeu-de-taquin equivalent to some fixed
standard Young tableau of shape $\nu$.

Note that jeu-de-taquin does not decrease the number of cells in a row,
nor the number of cells in a column.  Thus, if $\nu=(n)$ but
$\lambda/\mu$ has two cells in a column, the Littlewood--Richardson
coefficient $c^{\lambda}_{\mu,\nu}$ must vanish.  Conversely, filling a
shape $\lambda/\mu$ which does not have any two cells in the same column
from left to right with the numbers $1$ to $n$ yields a standard Young
tableau which is jeu-de-taquin equivalent to the unique tableau of shape
$(n)$.

Evidently, the same argument works for $\nu=(1^n)$.

\medskip

\begin{definition}\label{def:majAndfakeDegreePoly}

Given a skew standard Young tableau $T$ with $n$ boxes,
we say that $j \in [n-1]$ is a \defin{descent} of $T$
if $j+1$ appear in a row with index strictly higher than that of $j$.
We let the \defin{major index} of $T$, denoted $\maj(T)$, be the sum of the descents of $T$.
The \defin{fake-degree polynomial} $f^{\lambda/\mu}(q)$
associated with a skew Young diagram $\lambda/\mu$
is defined as the sum
\begin{equation}
 f^{\lambda/\mu}(q) \coloneqq \sum_{T \in \SYT(\lambda/\mu)} q^{\maj(T)}.
\end{equation}
\end{definition}

The following lemma relates the skew Schur functions and the fake-degree polynomials.
\begin{lemma}[{\cite[Prop. 7.19.11 ]{StanleyEC2}}] \label{lem:principalSpecSchur}
Let $\lambda/\mu$ be a skew shape with $n$ boxes. Then
 \[
  \schurS_{\lambda/\mu}(1,q,q^2,\dotsc) =
  \frac{ f^{\lambda/\mu}(q) }{ (1-q)(1-q^2)\dotsm (1-q^n) }.
 \]
\end{lemma}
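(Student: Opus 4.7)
My plan is to expand the left-hand side using the combinatorial definition of the skew Schur function and reduce the computation to a classical enumeration of $P$-partitions. Substituting $x_i = q^{i-1}$ in~\eqref{eq:skewSchurDef} yields
\[
\schurS_{\lambda/\mu}(1, q, q^2, \ldots) = \sum_{T \in \SSYT(\lambda/\mu)} q^{\sum_{\square}(T(\square)-1)},
\]
and the goal is to recognise this expression as $f^{\lambda/\mu}(q)$ divided by $\prod_{i=1}^n(1-q^i)$.

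The first step is a \emph{standardization} bijection between $\SSYT(\lambda/\mu)$ and pairs $(P,\mathbf{a})$, where $P \in \SYT(\lambda/\mu)$ is obtained from $T$ by relabelling its boxes with $1, \dotsc, n$ in order of weakly increasing $T$-value, breaking ties from left to right, and $a_i$ records the $T$-value of the box that receives $P$-label $i$. I would verify that $\mathbf{a} = (a_1,\dotsc,a_n)$ is weakly increasing and that $a_i < a_{i+1}$ holds precisely when $i \in \operatorname{Des}(P)$: two boxes of $T$ carrying the same value form a horizontal strip inside which the leftmost sits weakly below the rightmost (ruling out ties across descents), while conversely between any two labels of $P$ sharing a column at least one descent must occur, which forces the column-strict condition on $T$. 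The weight transforms as $\sum_\square(T(\square)-1) = \sum_i(a_i-1)$.

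For each fixed $P$ with descent set $D$, the inner sum $\sum_{\mathbf{a}} q^{\sum_i(a_i-1)}$ can then be evaluated with the substitution $b_i := a_i - 1 - \#\{d \in D : d < i\}$, which is a bijection to weakly increasing sequences $0 \le b_1 \le \dotsb \le b_n$; the generating function for these is $\prod_{i=1}^n 1/(1-q^i)$ (partitions with at most $n$ parts) times a shift of $\sum_{d \in D}(n-d) = n|D| - \maj(P)$.

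The main obstacle is that this direct calculation produces $q^{n|D(P)|-\maj(P)}$ rather than $q^{\maj(P)}$ for each $P$. The cleanest remedy is to work inside Stanley's framework of $(P,\omega)$-partitions using the natural labelling $\omega$ that numbers the cells of $\lambda/\mu$ row by row from bottom to top and left to right within each row. For this choice of $\omega$ the vertical covers of the skew-shape poset become strict edges and the horizontal ones weak, so the $(P,\omega)$-partitions coincide with semi-standard tableaux of shape $\lambda/\mu$, while the $\omega$-descents of a linear extension coincide with the SYT descents of the associated standard Young tableau. Stanley's fundamental theorem of $(P,\omega)$-partitions then yields the inner sum as $q^{\maj(P)}/\prod_{i=1}^n(1-q^i)$ directly, and summation over $P \in \SYT(\lambda/\mu)$ recovers the right-hand side.
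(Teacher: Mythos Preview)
The paper does not supply its own proof of this lemma; it is quoted verbatim from Stanley~\cite[Prop.~7.19.11]{StanleyEC2}.  Your argument via standardization and Stanley's theory of $(P,\omega)$-partitions is precisely the proof given there, so in effect you have reproduced the source the paper cites.

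Two small remarks.  First, the sentence ``$a_i<a_{i+1}$ holds \emph{precisely} when $i\in\operatorname{Des}(P)$'' is not what you mean: the strict inequality is \emph{forced} at descents but merely \emph{permitted} elsewhere.  The bijection you want is between $\SSYT(\lambda/\mu)$ and pairs $(P,\mathbf a)$ with $a_1\le\dotsb\le a_n$ and $a_i<a_{i+1}$ whenever $i\in\operatorname{Des}(P)$; your subsequent substitution treats it correctly, so this is only a phrasing slip.  Second, the intermediate computation producing $q^{n|D|-\maj(P)}=q^{\operatorname{comaj}(P)}$ is not an error but a well-known artefact of the left-to-right tiebreak; one can repair it directly by the bijection on $\SYT(\lambda/\mu)$ that exchanges $\maj$ and $\operatorname{comaj}$ (e.g.\ via the 180-degree rotation symmetry of skew Schur functions), but your switch to the natural labelling $\omega$ reading rows from bottom to top is the cleaner route and is exactly how Stanley organises the argument.
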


\section{Skew characters and their fake degrees}\label{sec:skewCharacters}

A result by T.~Springer~\cite{Springer1974}
gives an expression for the evaluation of an irreducible character
of the symmetric group at a power of the \defin{long cycle} $\longcycle \in \symS_n$.
In this section, we extend this result to skew characters.
\begin{proposition}\label{thm:rootEvaluationGivesCharacter}
  Let $\lambda/\mu$ be a skew shape with $n$ boxes and let $\xi$ be a primitive
  $n^\thsup$ root of unity.  Then, for $n=dm$,
  \begin{align}\label{eq:rootEvaluationGivesCharacter}
    \chi^{\lambda/\mu}((m^d))  = f^{\lambda/\mu}(\xi^d).
  \end{align}
\end{proposition}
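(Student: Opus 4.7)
My plan is to reduce the skew case to the straight case via the Littlewood--Richardson rule, using T.~Springer's theorem \cite{Springer1974} as a black box. Springer's result states precisely that \eqref{eq:rootEvaluationGivesCharacter} holds whenever $\mu=\emptyset$, so the task is to show that both sides of \eqref{eq:rootEvaluationGivesCharacter} are linear in $\chi^{\lambda/\mu}$ when written in the irreducible basis.

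For the left-hand side, I would apply the Littlewood--Richardson rule for characters \eqref{eq:characterLRRule} directly, which gives
\[
  \chi^{\lambda/\mu}((m^d)) = \sum_{\nu \vdash n} c^{\lambda}_{\mu,\nu}\, \chi^{\nu}((m^d)) = \sum_{\nu \vdash n} c^{\lambda}_{\mu,\nu}\, f^{\nu}(\xi^d),
\]
where the last equality is Springer's theorem applied to each straight shape $\nu$.

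For the right-hand side, I would establish the analogous decomposition
\[
  f^{\lambda/\mu}(q) = \sum_{\nu \vdash n} c^{\lambda}_{\mu,\nu}\, f^{\nu}(q)
\]
by taking the principal specialization $x_i \mapsto q^{i-1}$ of the Littlewood--Richardson rule \eqref{eq:lrRule}, invoking \cref{lem:principalSpecSchur} on both sides, and clearing the common denominator $(1-q)(1-q^2)\cdots(1-q^n)$. Specializing at $q=\xi^d$ and comparing with the previous display yields the claim.

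I do not anticipate a serious obstacle: the argument is a direct bookkeeping reduction once one grants Springer's theorem, and both ingredients (the character form of Littlewood--Richardson and the principal specialization formula) are already recalled in the paper. The only minor subtlety worth a line in the proof is to note that the principal specialization $s_{\lambda/\mu}(1,q,q^2,\dots)$ converges as a formal power series even for skew shapes, so clearing denominators is a legal polynomial identity and in particular valid at $q = \xi^d$.
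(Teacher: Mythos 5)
Your proof is correct, and it takes a genuinely different route from the paper's.

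The paper proves \cref{thm:rootEvaluationGivesCharacter} as a corollary of a more general statement (\cref{prop:characterValuesFromPrincipalSpec}): for \emph{any} homogeneous symmetric function $F$ of degree $n$ whose ``fake degree'' $f^F(q) = \prod_{j=1}^n(1-q^j)\,F(1,q,q^2,\dots)$ is a polynomial with integer coefficients, one has $\chi^F((m^d)) = f^F(\xi^d)$. The paper's proof works directly from the power-sum expansion of $F$ and computes the limit as $q\to\xi^d$, using l'H\^opital to extract the surviving term $\nu=(m^d)$. In particular, the paper's argument reproves Springer's theorem for straight shapes as a special case rather than citing it.

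Your argument instead takes Springer's theorem as a black box and reduces the skew case to the straight case via the Littlewood--Richardson rule: on the character side you use \eqref{eq:characterLRRule} together with Springer term by term, and on the polynomial side you principally specialize \eqref{eq:lrRule}, apply \cref{lem:principalSpecSchur}, and clear the common factor $(1-q)\cdots(1-q^n)$ to obtain the polynomial identity $f^{\lambda/\mu}(q) = \sum_\nu c^\lambda_{\mu,\nu} f^\nu(q)$; evaluating at $q=\xi^d$ finishes the proof. Both steps check out, and the ``clearing denominators'' move is legitimate precisely because both sides of the principally specialized LR rule live in $\setZ[q]/\prod_{j=1}^n(1-q^j)$ (equivalently, because Lemma~\ref{lem:principalSpecSchur} guarantees each $f^{\lambda/\mu}$ is already a polynomial). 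Your approach buys simplicity and avoids the analytic limit argument entirely; the paper's approach buys generality (it applies to symmetric functions that are not single skew Schur functions) and is self-contained in that it does not invoke Springer's theorem.
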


We shall first prove a more general result, from which we deduce \eqref{eq:rootEvaluationGivesCharacter}.
The following proposition was first proved explicitly by B.~Sagan, J.~Shareshian and M.~Wachs~\cite[Prop. 3.1]{SaganShareshianWachs2011}.
However, as they note, it is already implicit in work of J.~D{\'{e}}sarm{\'{e}}nien~\cite{Desarmenien1983}.
We include yet another, different, proof.

\begin{proposition}\label{prop:characterValuesFromPrincipalSpec}
 Let $F(\xvec)$ be a homogeneous symmetric function of degree $n$, such that
 \[
  F(\xvec) = \sum_{\nu \vdash n} \chi^F(\nu) \frac{\powerSum_\nu(\xvec)}{z_\nu}.
 \]
Let us furthermore introduce  $f^F(q)$ as
 \[
  f^F(q) \coloneqq \prod_{j=1}^n (1-q^j) F(1,q,q^2,\dotsc).
 \]
 Suppose that $f^F(q) \in \setZ[q]$.  Then, for a primitive
 $n^\thsup$ root of unity $\xi$ and $n=dm$, we have
 $\chi^F((m^d)) = f^F(\xi^d)$.
\end{proposition}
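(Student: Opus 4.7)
The plan is to start from the power sum expansion of $F$ and compute the principal specialization directly. Since $p_k(1,q,q^2,\dots) = 1/(1-q^k)$, we have $p_\nu(1,q,q^2,\dots) = \prod_i 1/(1-q^{\nu_i})$, so
\begin{equation*}
  f^F(q) = \prod_{j=1}^n(1-q^j)\cdot F(1,q,q^2,\dots)
         = \sum_{\nu\vdash n} \frac{\chi^F(\nu)}{z_\nu}\, R_\nu(q),
  \qquad R_\nu(q) \coloneqq \frac{\prod_{j=1}^n(1-q^j)}{\prod_i(1-q^{\nu_i})}.
\end{equation*}
A quick order-of-vanishing check at any root of unity shows that each $R_\nu$ is a genuine polynomial in $q$, so there is no convergence issue when evaluating at $q=\xi^d$.

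The key step is then to show that at $q=\xi^d$, every $R_\nu(\xi^d)$ vanishes except for $\nu=(m^d)$. For this I would count zeros of numerator and denominator at $q=\xi^d$. Since $\xi^d$ has order $m$, the numerator $\prod_{j=1}^n(1-q^j)$ has a zero precisely for each $j\in\{1,\dots,n\}$ with $m\mid j$, contributing a zero of order $d$. The denominator $\prod_i(1-q^{\nu_i})$ has a zero exactly for each part $\nu_i$ with $m\mid\nu_i$; writing $r$ for the number of such parts, its order of vanishing is $r$. Because the parts of $\nu$ that are multiples of $m$ sum to at most $n=dm$, we have $r\le d$, with equality forcing every such part to equal $m$ and there to be no other parts, i.e.\ $\nu=(m^d)$. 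Hence if $\nu\neq(m^d)$ then $r<d$ and $R_\nu$ has a zero of order $d-r>0$ at $\xi^d$.

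It remains to evaluate the surviving term $R_{(m^d)}(\xi^d)$. Factoring out the multiples of $m$ in the numerator gives
\begin{equation*}
  R_{(m^d)}(q) = \prod_{k=1}^d \frac{1-q^{km}}{1-q^m}\cdot\!\prod_{\substack{1\le j\le n\\ m\nmid j}}\!(1-q^j).
\end{equation*}
At $q=\xi^d$ one has $q^m=1$, so the first product is $\prod_{k=1}^d k = d!$. For the second, the values $(\xi^d)^j$ with $m\nmid j$ run through the nontrivial $m^\thsup$ roots of unity, each attained exactly $d$ times, so the product equals $\bigl(\prod_{\zeta^m=1,\,\zeta\ne 1}(1-\zeta)\bigr)^d = m^d$. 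Thus $R_{(m^d)}(\xi^d)=d!\,m^d = z_{(m^d)}$, and substituting back yields
\begin{equation*}
  f^F(\xi^d) = \frac{\chi^F((m^d))}{z_{(m^d)}}\cdot z_{(m^d)} = \chi^F((m^d)).
\end{equation*}

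The main obstacle I expect is purely bookkeeping: matching the order of vanishing of the numerator against that of the denominator and extracting the exact limit $R_{(m^d)}(\xi^d)=z_{(m^d)}$. The hypothesis $f^F\in\setZ[q]$ is only used to ensure that evaluation at $\xi^d$ is unambiguous (in particular, $f^F$ is well-defined as an element of a localization where $\prod_j(1-q^j)$ is invertible), but the equality is really an identity of polynomials derived termwise from the power sum expansion.
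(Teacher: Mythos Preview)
Your proof is correct and follows essentially the same route as the paper's: expand $F$ in power sums, observe that only the term $\nu=(m^d)$ survives at $q=\xi^d$, and compute that surviving contribution to be exactly $\chi^F((m^d))$. The one cosmetic difference is that you first note each $R_\nu$ is a genuine polynomial and evaluate directly (using the geometric-sum identity $\tfrac{1-q^{km}}{1-q^m}=1+q^m+\dots+q^{(k-1)m}$), whereas the paper phrases the same computation as a limit $q\to\xi^d$ and invokes l'H\^opital; the resulting numbers $d!$ and $m^d$ are obtained identically.
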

\begin{proof}
We have that $\powerSum_{k}(1,q,q^2,\dotsc) = (1-q^k)^{-1}$.
By definition,
 \[
f^F(q)= \left( \prod_{j=1}^n (1-q^j)\right)
\sum_{\nu \vdash n} \frac{\chi^{F}(\nu)}{z_\nu}\prod_{k=1}^{\ell(\nu)}\frac{1}{1-q^{\nu_k}}.
\]
Each summand on the right hand side approaches $0$ as $q\to \xi^d$ unless $\nu=(m^d)$,
as the first product has a zero with multiplicity $d$ at $q  = \xi^d$.
By only considering the terms involving $\nu=(m^d)$ in
the sum and rearranging the expression slightly, we have
\[
\lim_{q\to \xi^d} f^F(q) =
\lim_{q\to \xi^d}
\left( \prod_{k=1}^{d}\frac{1}{1-q^{km}} \prod_{j=1}^n (1-q^j)\right)
\frac{\chi^{F}( (m^d) )}{z_{(m^d)}}\prod_{k=1}^{d}\frac{1-q^{k m}}{1-q^{m}}.
\]
The last product approaches $d!$ due to l'Hospital's rule. After taking the limit and using $z_{(m^d)} = d! m^d$, we obtain
\[
f^F( \xi^d ) =
\left( \frac{1}{m} \prod_{j=1}^{m-1} (1-\xi^{jd})\right)^d \chi^{F}((m^d)).
\]
From the fact that $x^n-1 = \prod_{j=0}^{n-1}(x-\xi^j)$ we deduce
that $\prod_{j=1}^{m-1} (1-\xi^{jd}) = m$.
This shows that $f^F( \xi^d ) =  \chi^{F}((m^d))$, which proves our claim.
\end{proof}

\begin{proof}[Proof of \cref{thm:rootEvaluationGivesCharacter}]
  By \cref{lem:principalSpecSchur}, this is the special case of
  \cref{prop:characterValuesFromPrincipalSpec} with
  $F(\xvec) = \schurS_{\lambda/\mu}(\xvec)$.
\end{proof}

\section{Skew border-strip tableaux and the abacus}\label{sec:abaci}
In this section, we recall the definition of border-strip tableaux,
abaci and the Littlewood map. We end with a generalization of a
theorem of G.~James and A.~Kerber.

A \defin{border-strip} (or \defin{ribbon} or \defin{skew hook})
is a connected non-empty skew Young diagram containing no $2 \times 2$-square of boxes,
as in \cref{fig:borderStripExample}.
The \defin{height} $\height(B)$ of a border-strip $B$ is one less than the number of rows it spans.
\begin{figure}[!ht]
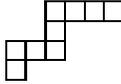

\ytableausetup{boxsize=0.7em}
    \[
    \ydiagram{2+4,2+1,3,1}
    \]
    \caption{A border-strip of height $3$.}
    \label{fig:borderStripExample}
\end{figure}

Let $\lambda/\mu$ be a skew shape. The \defin{size} of $\lambda/\mu$ is its number of boxes, denoted $|\lambda/\mu|$. Suppose that $\nu = (\nu_1, \dotsc, \nu_\ell)$ is a partition of $|\lambda/\mu|$. A \defin{border-strip tableau} of shape $\lambda/\mu$ and \defin{type} $\nu$
is a partition $B$ of the Young diagram of $\lambda/\mu$,
into labeled border-strips $B_1,\dotsc,B_\ell$
with the following properties:
\begin{itemize}
 \item the border-strip $B_j$ has label $j$ and size $\nu_j$,
 \item labeling all boxes in $B_j$ with $j$ results in a labeling of the diagram
 $\lambda/\mu$ where labels in every row and every column are weakly increasing.
\end{itemize}
We let $\BST(\lambda/\mu,\nu)$ denote the set of all such border-strip tableaux.
In particular, $\BST(\lambda/\mu,1^{n})$ may be identified with
the set of standard Young tableaux of shape $\lambda/\mu$.
In the remainder of the paper, we shall only concern ourselves with
border-strip tableaux where all strips have the same size.
We let \defin{$\BST(\lambda/\mu,d)$} denote the set of border-strip tableaux
where every strip have size $d$.

The \defin{height} of a border-strip tableau $T$, or any tiling of a tableau with border-strips, is the sum of the heights of the border-strips in
the partition.  The \defin{content} of a
box is given by its column index minus its row index.  Observe that
in a border-strip, the lowest leftmost box has the smallest content.
As a convention, the label of a strip is placed in the unique box with minimal
content in the strip, as done in \cref{fig:BSTExample}.

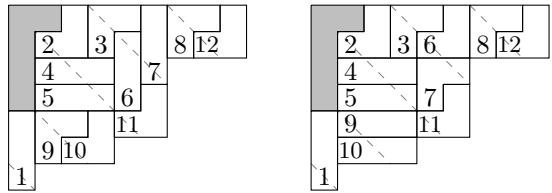
\begin{figure}[!ht]
\begin{tikzpicture}[baseline=(current bounding box.center)]
\begin{scope}[yscale=-1]
\draw[gray, very thin, dashed,x=1em,y=1em] (0,6) -- (1,7);
\draw[gray, very thin, dashed,x=1em,y=1em] (0,3) -- (3,6);
\draw[gray, very thin, dashed,x=1em,y=1em] (0,0) -- (5,5);
\draw[gray, very thin, dashed,x=1em,y=1em] (3,0) -- (6,3);
\draw[gray, very thin, dashed,x=1em,y=1em] (6,0) -- (8,2);
\draw[black,x=1em,y=1em,fill=lightgray] (0,0)--(2,0)--(2,1)--(1,1)--(1,4)--(0,4)--(0,0);
\begin{scope}[yshift=4em]
\pic{iTrio};
\end{scope}
\begin{scope}[xshift=1em,yshift=0em]
\pic{jTrio};
\end{scope}
\begin{scope}[xshift=3em,yshift=0em]
\pic{tTrio};
\end{scope}
\begin{scope}[xshift=1em,yshift=2em]
\pic{hTrio};
\end{scope}
\begin{scope}[xshift=1em,yshift=3em]
\pic{hTrio};
\end{scope}
\begin{scope}[xshift=4em,yshift=1em]
\pic{iTrio};
\end{scope}
\begin{scope}[xshift=5em,yshift=0em]
\pic{iTrio};
\end{scope}
\begin{scope}[xshift=6em,yshift=0em]
\pic{tTrio};
\end{scope}
\begin{scope}[xshift=1em,yshift=4em]
\pic{tTrio};
\end{scope}
\begin{scope}[xshift=2em,yshift=4em]
\pic{jTrio};
\end{scope}
\begin{scope}[xshift=4em,yshift=3em]
\pic{jTrio};
\end{scope}
\begin{scope}[xshift=7em,yshift=0em]
\pic{jTrio};
\end{scope}
\begin{scope}[xshift=-0.5em,yshift=-0.5em]
\node[x=1em,y=1em] at (1,4) {};
\node[x=1em,y=1em] at (1,7) {1};
\node[x=1em,y=1em] at (2,2) {2};
\node[x=1em,y=1em] at (4,2) {3};
\node[x=1em,y=1em] at (2,3) {4};
\node[x=1em,y=1em] at (2,4) {5};
\node[x=1em,y=1em] at (5,4) {6};
\node[x=1em,y=1em] at (6,3) {7};
\node[x=1em,y=1em] at (7,2) {8};
\node[x=1em,y=1em] at (2,6) {9};
\node[x=1em,y=1em] at (3,6) {\small{10}};
\node[x=1em,y=1em] at (5,5) {\small{11}};
\node[x=1em,y=1em] at (8,2) {\small{12}};
\end{scope}
\end{scope}
\end{tikzpicture}
\qquad
\begin{tikzpicture}[baseline=(current bounding box.center)]
\begin{scope}[yscale=-1]
\draw[gray, very thin, dashed,x=1em,y=1em] (0,6) -- (1,7);
\draw[gray, very thin, dashed,x=1em,y=1em] (0,3) -- (3,6);
\draw[gray, very thin, dashed,x=1em,y=1em] (0,0) -- (5,5);
\draw[gray, very thin, dashed,x=1em,y=1em] (3,0) -- (6,3);
\draw[gray, very thin, dashed,x=1em,y=1em] (6,0) -- (8,2);
\draw[black,x=1em,y=1em,fill=lightgray] (0,0)--(2,0)--(2,1)--(1,1)--(1,4)--(0,4)--(0,0);
\begin{scope}[yshift=4em]
\pic{iTrio};
\end{scope}
\begin{scope}[xshift=1em,yshift=0em]
\pic{jTrio};
\end{scope}
\begin{scope}[xshift=3em,yshift=0em]
\pic{tTrio};
\end{scope}
\begin{scope}[xshift=1em,yshift=2em]
\pic{hTrio};
\end{scope}
\begin{scope}[xshift=1em,yshift=3em]
\pic{hTrio};
\end{scope}
\begin{scope}[xshift=4em,yshift=2em]
\pic{tTrio};
\end{scope}
\begin{scope}[xshift=4em,yshift=0em]
\pic{jTrio};
\end{scope}
\begin{scope}[xshift=6em,yshift=0em]
\pic{tTrio};
\end{scope}
\begin{scope}[xshift=1em,yshift=4em]
\pic{hTrio};
\end{scope}
\begin{scope}[xshift=1em,yshift=5em]
\pic{hTrio};
\end{scope}
\begin{scope}[xshift=4em,yshift=3em]
\pic{jTrio};
\end{scope}
\begin{scope}[xshift=7em,yshift=0em]
\pic{jTrio};
\end{scope}
\begin{scope}[xshift=-0.5em,yshift=-0.5em]
\node[x=1em,y=1em] at (1,4) {};
\node[x=1em,y=1em] at (1,7) {1};
\node[x=1em,y=1em] at (2,2) {2};
\node[x=1em,y=1em] at (4,2) {3};
\node[x=1em,y=1em] at (2,3) {4};
\node[x=1em,y=1em] at (2,4) {5};
\node[x=1em,y=1em] at (5,2) {6};
\node[x=1em,y=1em] at (5,4) {7};
\node[x=1em,y=1em] at (7,2) {8};
\node[x=1em,y=1em] at (2,5) {9};
\node[x=1em,y=1em] at (2,6) {\small{10}};
\node[x=1em,y=1em] at (5,5) {\small{11}};
\node[x=1em,y=1em] at (8,2) {\small{12}};
\end{scope}
\end{scope}
\end{tikzpicture}
\caption{Two border-strip tableaux in $\BST((9^2,\!6^3,\!4,\!1)/(2,\!1^3), 3)$.
In each strip, the label has been placed in the box with minimal content.
}
\label{fig:BSTExample}
\end{figure}

In \cref{eq:schurPExp}, the skew characters $\chi^{\lambda/\mu}$ were defined.
The skew Murnaghan--Nakayama rule describes a way to compute these skew characters.
\begin{theorem}[{Murnaghan--Nakayama, see \cite[Cor. 7.17.5 ]{StanleyEC2}}]\label{thm:mnRule}
The skew characters are given by the signed sum
\begin{equation}\label{eq:mnRule}
\chi^{\lambda/\mu}(\nu) = \sum_{B \in \BST(\lambda/\mu,\nu)} (-1)^{\height(B)}.
\end{equation}
\end{theorem}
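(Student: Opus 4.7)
The plan is to reduce the skew Murnaghan--Nakayama rule to the classical (straight-shape) formula via the adjointness property of skew Schur functions with respect to the Hall inner product $\langle\cdot,\cdot\rangle$ on symmetric functions. Since the power-sum basis is orthogonal with $\langle \powerSum_\nu, \powerSum_\rho\rangle = z_\nu \delta_{\nu\rho}$, the defining expansion \eqref{eq:schurPExp} yields immediately
\begin{equation*}
\chi^{\lambda/\mu}(\nu) \;=\; \langle \schurS_{\lambda/\mu}, \powerSum_\nu\rangle \;=\; \langle \schurS_\lambda,\, \schurS_\mu\, \powerSum_\nu\rangle,
\end{equation*}
where the second equality uses the standard adjointness $\langle \schurS_{\lambda/\mu}, f\rangle = \langle \schurS_\lambda, \schurS_\mu f\rangle$, which in turn follows directly from the Littlewood--Richardson rule \eqref{eq:lrRule} combined with orthonormality of the Schur basis.

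The core ingredient is the single-power-sum identity
\begin{equation*}
\powerSum_k \cdot \schurS_\alpha \;=\; \sum_{\beta} (-1)^{\height(\beta/\alpha)}\, \schurS_\beta,
\end{equation*}
where $\beta$ ranges over partitions such that $\beta/\alpha$ is a border-strip of size $k$. This is the classical Murnaghan--Nakayama rule in product form; one standard derivation expresses $\powerSum_k$ in the $h$- and $e$-bases via Newton's identities, applies the Pieri rules, and telescopes the resulting alternating sum, so that the only surviving terms are single border-strip additions with sign $(-1)^{r-1}$ where $r$ is the number of rows traversed. I expect this step to be the most delicate, since carefully tracking the signs in the telescoping cancellation is precisely where the height statistic appears.

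Iterating the single-strip identity, multiplying $\schurS_\mu$ successively by $\powerSum_{\nu_1}, \powerSum_{\nu_2}, \dots, \powerSum_{\nu_\ell}$ adjoins a labeled border-strip of size $\nu_j$ at each step and multiplies the sign by $(-1)^{\height(\beta_j/\beta_{j-1})}$. The sequences of intermediate shapes that terminate at $\lambda$ are in bijection with the elements of $\BST(\lambda/\mu,\nu)$, and the total sign associated with a tableau $B$ is $(-1)^{\height(B)}$, so
\begin{equation*}
\schurS_\mu\, \powerSum_\nu \;=\; \sum_{\lambda}\Bigl(\sum_{B \in \BST(\lambda/\mu,\nu)} (-1)^{\height(B)}\Bigr)\schurS_\lambda.
\end{equation*}
Substituting this expansion into $\chi^{\lambda/\mu}(\nu) = \langle \schurS_\lambda, \schurS_\mu \powerSum_\nu\rangle$ and using orthonormality of the Schur basis extracts the coefficient of $\schurS_\lambda$, yielding \eqref{eq:mnRule}.
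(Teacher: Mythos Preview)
Your argument is correct and is essentially the standard derivation of the skew Murnaghan--Nakayama rule. Note, however, that the paper does not supply its own proof of this statement: \cref{thm:mnRule} is stated as background with a citation to \cite[Cor.~7.17.5]{StanleyEC2}, so there is no in-paper proof to compare against. Your route---extracting $\chi^{\lambda/\mu}(\nu)=\langle\schurS_\lambda,\schurS_\mu\powerSum_\nu\rangle$ via the adjointness of skew Schur functions, then iterating the single-strip identity $\powerSum_k\schurS_\alpha=\sum_\beta(-1)^{\height(\beta/\alpha)}\schurS_\beta$---is exactly the argument Stanley presents, so you have reproduced the cited reference rather than diverged from it.
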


\begin{definition}
  Given a partition $\lambda$ with $\ell$ parts and an integer
  $d\geq 1$, we define an \defin{abacus} with $d$ \defin{runners} as
  follows.

  Let $b_1<b_2<\dots<b_\ell$ be the hook lengths of the boxes in the
  first column of $\lambda$, see \cref{fig:binaryWord}. Then, proceeding row by row from left
  to right, label the positions on the runners of the abacus from $0$
  to $b_\ell$, placing a bead at each of the positions labeled
  $b_1,b_2,\dots,b_\ell$.
\end{definition}

We can now define the core and the quotient of a partition.
\begin{definition}
  Let $\lambda$ be a partition and let $d\geq 1$ be an integer.  Then
  the \defin{$d$-core} of $\lambda$ is the partition corresponding to
  the $d$-abacus with all beads moved up as far as possible.  The
  \defin{$d$-quotient} of $\lambda$ is the $d$-tuple of partitions
  obtained by regarding each runner as an individual $1$-abacus.
\end{definition}

Next, we extend these definitions to skew shapes. To do so,
we need the following observation.
\begin{proposition}
  Let $\lambda$ be a partition.  Suppose that $\mu$ is obtained from
  $\lambda$ by removing a border-strip of size $d$.  Then the
  $d$-abacus corresponding to $\mu$ is obtained from the $d$-abacus
  corresponding to $\lambda$ by moving a bead up by one position
  along its runner.
\end{proposition}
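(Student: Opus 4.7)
The plan is to reduce the claim to the classical beta-number encoding, where removing a border-strip of size $d$ amounts to decreasing exactly one bead position by $d$. First, recall that the bead positions $b_1<b_2<\dots<b_\ell$ appearing in the definition of the abacus are precisely the first-column hook lengths of $\lambda$ (possibly after padding with extra beads at $0,1,\dots,r-1$). The $d$-abacus is then just a display of the set $\{b_1,\dots,b_\ell\}$ with positions congruent to $r$ modulo $d$ stacked into the $r^{\textnormal{th}}$ runner, so two positions on the same runner differing by $d$ are vertically adjacent. Consequently, the proposition is equivalent to the statement that passing from $\lambda$ to $\mu$ replaces exactly one bead position $b_i$ by $b_i-d$, leaving every other $b_j$ fixed, with $b_i-d$ not already occupied.

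Next, I would establish this equivalent claim by encoding $\lambda$ via its southeast boundary. Reading this boundary from bottom-left to top-right as an infinite word in $\{U,R\}$ and indexing its letters by $\setZ$ so that far to the left one sees only $U$ and far to the right only $R$, the positions of the $U$-letters are precisely the bead positions on the $1$-abacus, hence include $b_1,\dots,b_\ell$. The key observation is that any border-strip of size $d$ in $\lambda$ occupies a contiguous segment of the boundary between some $R$ at position $p-d$ and some $U$ at position $p$, and the absence of a $2\times 2$ square in the strip is exactly what makes the relevant boundary segment a simple lattice path. A direct case analysis then shows that removing the border-strip transforms the boundary of $\lambda$ into the boundary of $\mu$ by swapping the $U$ at position $p$ with the $R$ at position $p-d$, leaving all other boundary letters unchanged; conversely, every pair $(p,p-d)$ with $U$ at $p$ and $R$ at $p-d$ arises from a unique border-strip of size $d$.

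The translation back to the $d$-abacus is then automatic: the bead at position $p=b_i$ and the empty slot at position $p-d$ lie on the same runner and are vertically adjacent, so removing the border-strip slides this one bead up by a single position along its runner while fixing every other bead. The main obstacle is the bead--gap bijection in the second step; it is the classical Nakayama/James--Kerber observation, but it requires one careful verification, namely that the no-$2\times 2$-square condition on the strip corresponds precisely to the change in the boundary being localized at the two positions $p$ and $p-d$. Once this is established, the rest of the argument is merely a translation between equivalent encodings of the same combinatorial data.
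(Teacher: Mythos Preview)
The paper does not actually supply a proof of this proposition; it is stated as a needed ``observation'' and used immediately, the implicit reference being the classical treatment of G.~James and A.~Kerber~\cite[Ch.~2.7]{JamesKerber1984}. Your outline is exactly the standard argument found there: pass to the boundary word (equivalently the $\beta$-set), identify a removable rim hook of size $d$ with a $U$ at some position $p$ and an $R$ at $p-d$, and check that deleting the hook swaps just these two letters. So there is nothing to compare against, and your route is the canonical one.

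Two small points of precision are worth tightening before this becomes a finished proof. First, where you write ``any border-strip of size $d$ in $\lambda$'', you should say \emph{removable} border-strip: it is precisely the hypothesis that $\lambda\setminus B$ is again a partition that forces $B$ to lie along the southeast rim, hence to correspond to a segment of the boundary word. Second, the sentence ``the absence of a $2\times 2$ square in the strip is exactly what makes the relevant boundary segment a simple lattice path'' is not quite the right formulation --- the boundary of $\lambda$ is always a simple lattice path. What the no-$2\times 2$ condition actually gives you is that the ribbon has width one, so its inner boundary (which becomes part of the boundary of $\mu$) is the unit diagonal translate of its outer boundary, and therefore carries the \emph{same} $U/R$ pattern on positions $p-d+1,\dots,p-1$. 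That is the reason only the two endpoint letters change, and it is the one step in your ``direct case analysis'' that deserves to be written out.
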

Therefore, if $\lambda/\mu$ is a skew shape such that
$\BST(\lambda/\mu, d)$ is non-empty, the abacus corresponding to
$\lambda$ can be obtained from the abacus corresponding to $\mu$ by
moving down beads along their runners.  Note that in this case the
$d$-cores of $\lambda$ and $\mu$ coincide.
\begin{definition}
  The \defin{$d$-quotient} of a skew shape $\lambda/\mu$ is the
  $d$-tuple of skew shapes obtained by regarding each
  corresponding pair of runners in the pair of $d$-abaci for
  $\lambda$ and $\mu$ as a pair of $1$-abaci.
\end{definition}

\begin{note}
  We use the slightly nonstandard convention fixing the number of
  beads to be equal to the number of parts $\ell(\lambda)$ of $\lambda$.
\end{note}

Finally, we recall the \defin{Littlewood map} as described by, for example,
G.~James \& A.~Kerber~\cite[Ch.~2.7]{JamesKerber1984} or I.~Pak~\cite[fig.~2.6]{Pak2000Ribbon}, where it is called the rim hook bijection.

Let \defin{$\SYTT(\lambda/\mu, d)$} be the set of $d$-tuples
$(T^1,T^2,\dotsc,T^d)$ of tableaux with the following properties:
\begin{itemize}
 \item the shape of $T^j$ is $\lambda^j/\mu^j$, the $j^\thsup$ entry of the $d$-quotient of $\lambda/\mu$,
 \item for each tableau $T^j$, the box labels in rows and columns increase,
 \item each of the numbers $\{1,2,\dotsc,n/d\}$ appears in precisely one tableau.
\end{itemize}

Then the Littlewood map is the following bijection between
$\BST(\lambda/\mu,d)$ and $\SYTT(\lambda/\mu, d)$.  Let
$B \in \BST(\lambda/\mu,d)$.  Let $c(x)$ denote the content of the unique box with minimal
content in the $d$-strip labeled $x$, as shown in \cref{fig:BSTExample}.
Then the boxes in $T^j$ are filled with
the labels $x$ such that $c(x) \equiv j-\ell(\lambda) \mod d$.  Furthermore,
the relative position of the labels in $T^j$ is the same as in the border-strip tableau.  In particular,
two labeled boxes in $T^j$ differ in content by $k$ if and only if the
corresponding labels differ in content by $dk$ in $B$.

The image of the two border-strip tableaux in \cref{fig:BSTExample}
are given by the triples in \eqref{eq:bstLittlewoodImage1} and
\eqref{eq:bstLittlewoodImage2}, respectively.

\begin{example}
The $3$-cores of $\lambda = (9^2,\!6^3,\!4,\!1)$ and $\mu=(2,\!1^3)$ are both given by the
partition $(2)$.
The $3$-quotient of $\lambda$ is $(4,\!3), (2), (2,\!1^2)$
and the $3$-quotient of $\mu$ is $(1), \emptyset, \emptyset$.
Thus, the $3$-quotient of $\lambda/\mu$ is $(4,\!3)/(1),\; (2),\; (2,\!1^2)$
and the two tuples
\begin{align}
\ytableausetup{boxsize=1em}
&\ytableaushort{{\none}27{12},1{10}{11}}\quad  \ytableaushort{56}\quad  \ytableaushort{38,4,{9}} \label{eq:bstLittlewoodImage1} \\
&\ytableaushort{{\none}26{12},1{9}{11}}\quad  \ytableaushort{57}\quad  \ytableaushort{38,4,{10}} \label{eq:bstLittlewoodImage2}
\end{align}
are elements in $\SYTT(\lambda/\mu,3)$, corresponding to the two tableaux in \cref{fig:BSTExample}.
\end{example}

\begin{figure}[!ht]
\begin{tikzpicture}[baseline=(current bounding box.center)]
\begin{scope}[yscale=-1]

\draw[gray, very thin, dashed,x=1em,y=1em] (0,6) -- (1,7);
\draw[gray, very thin, dashed,x=1em,y=1em] (0,3) -- (3,6);
\draw[gray, very thin, dashed,x=1em,y=1em] (0,0) -- (5,5);
\draw[gray, very thin, dashed,x=1em,y=1em] (3,0) -- (6,3);
\draw[gray, very thin, dashed,x=1em,y=1em] (6,0) -- (8,2);

\draw[black,x=1em,y=1em,fill=lightgray] (0,0)--(2,0)--(2,1)--(1,1)--(1,4)--(0,4)--(0,0);
\begin{scope}[yshift=4em]
\pic{iTrio};
\end{scope}
\begin{scope}[xshift=1em,yshift=0em]
\pic{jTrio};
\end{scope}
\begin{scope}[xshift=3em,yshift=0em]
\pic{tTrio};
\end{scope}
\begin{scope}[xshift=1em,yshift=2em]
\pic{hTrio};
\end{scope}
\begin{scope}[xshift=1em,yshift=3em]
\pic{hTrio};
\end{scope}
\begin{scope}[xshift=4em,yshift=1em]
\pic{iTrio};
\end{scope}
\begin{scope}[xshift=5em,yshift=0em]
\pic{iTrio};
\end{scope}
\begin{scope}[xshift=6em,yshift=0em]
\pic{tTrio};
\end{scope}
\begin{scope}[xshift=1em,yshift=4em]
\pic{tTrio};
\end{scope}
\begin{scope}[xshift=2em,yshift=4em]
\pic{jTrio};
\end{scope}
\begin{scope}[xshift=4em,yshift=3em]
\pic{jTrio};
\end{scope}
\begin{scope}[xshift=7em,yshift=0em]
\pic{jTrio};
\end{scope}

\begin{scope}[xshift=-0.5em,yshift=-0.5em]
\node[x=1em,y=1em] at (-1,7) {1};
\node[x=1em,y=1em] at (-1,6) {5};
\node[x=1em,y=1em] at (-1,5) {8};
\node[x=1em,y=1em] at (-1,4) {9};
\node[x=1em,y=1em] at (-1,3) {10};
\node[x=1em,y=1em] at (-1,2) {14};
\node[x=1em,y=1em] at (-1,1) {15};
\node[x=1em,y=1em] at (-2.5,4) {1};
\node[x=1em,y=1em] at (-2.5,3) {2};
\node[x=1em,y=1em] at (-2.5,2) {3};
\node[x=1em,y=1em] at (-2.5,1) {5};
\end{scope}
\end{scope}
\end{tikzpicture}
  \qquad
    \begin{tikzpicture}[baseline=2.2cm, xscale=2, yscale=0.5, %
      bead/.style={shape=circle, draw, fill=white, minimum size=8pt, inner sep = 0pt,%
execute at begin node=$\scriptstyle,%
execute at end node=$%
      }, %
      nonbead/.style={shape=circle, fill=white,inner sep=0.2pt,%
execute at begin node=$\scriptstyle,%
execute at end node=$},%
      runnerlabel/.style={shape=circle,inner sep=0.2pt,%
execute at begin node=$\scriptstyle,%
execute at end node=$%
      },
      bullet/.style={shape=circle, fill=black, minimum size=8pt}]
      \draw (-0.25, 9.5) -- (2.25, 9.5);
      \draw (0, 7) -- (0, 9.5);
      \draw (1, 7) -- (1, 9.5);
      \draw (2, 7) -- (2, 9.5);
      \node[bullet] at (0, 9) {};
      \node[nonbead] at (0, 8) {0};
      \node[bead] at (0, 7) {3};
      \node[bullet] at (1, 9) {};
      \node[bead] at (1, 8) {1};
      \node[nonbead] at (1, 7) {4};
      \node[bullet] at (2, 9) {};
      \node[bead] at (2, 8) {2};
      \node[bead] at (2, 7) {5};
      \draw (-0.25, 6.5) -- (2.25, 6.5);
      \draw (0, 1) -- (0, 6.5);
      \draw (1, 1) -- (1, 6.5);
      \draw (2, 1) -- (2, 6.5);
      \node[nonbead] at (0, 6) {0};
      \node[nonbead] at (0, 5) {3};
      \node[nonbead] at (0, 4) {6};
      \node[bead] at (0, 3) {9};
      \node[nonbead] at (0, 2) {12};
      \node[bead] at (0, 1) {15};
      \node[runnerlabel] at (0, 0) {\nu^1/\kappa^1=(4,3)/(1)};
      \node[bead] at (1, 6) {1};
      \node[nonbead] at (1, 5) {4};
      \node[nonbead] at (1, 4) {7};
      \node[bead] at (1, 3) {10};
      \node[nonbead] at (1, 2) {13};
      \node[runnerlabel] at (1, 0) {\nu^2/\kappa^2=\emptyset};
      \node[nonbead] at (2, 6) {2};
      \node[bead] at (2, 5) {5};
      \node[bead] at (2, 4) {8};
      \node[nonbead] at (2, 3) {11};
      \node[bead] at (2, 2) {14};
      \node[runnerlabel] at (2, 0) {\nu^3/\kappa^3=(2,1^2)};
    \end{tikzpicture}
\caption{The hook lengths of the first column of the inner and the outer shape of the diagram in \cref{fig:BSTExample}, and the pair of abaci corresponding to the shape.
}
\label{fig:binaryWord}
\end{figure}
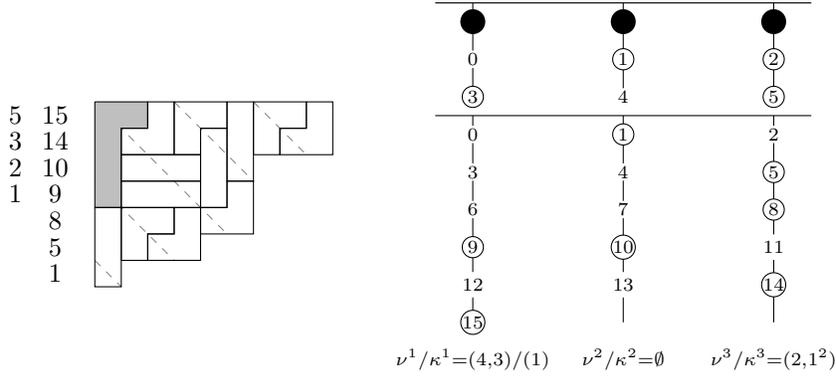

\begin{lemma}\label{lem:stripHook}
  Let $\lambda/\mu$ be a skew shape of size $n$ and let $k$ and $d$
  be positive integers with $dk \mid n$.  Suppose that
  $\BST(\lambda/\mu, dk)$ is non-empty and let
  $(\nu^1/\kappa^1,\dotsc,\nu^k/\kappa^k)$ be the skew $k$-quotient
  of $\lambda/\mu$.  Then
  $d\mid\gcd(|\nu^1/\kappa^1|,\dotsc,|\nu^k/\kappa^k|)$ and
  \begin{equation}\label{eq:stripHook}
    |\BST(\lambda/\mu, dk)| = %
    \binom{\sum_i|\nu^i/\kappa^i|/d} %
    {|\nu^1/\kappa^1|/d,\dotsc,|\nu^k/\kappa^k|/d} %
    \prod_{i=1}^k |\BST(\nu^i/\kappa^i, d)|.
 \end{equation}
\end{lemma}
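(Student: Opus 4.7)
The plan is to reduce $\BST(\lambda/\mu, dk)$ to border-strip tableaux on each piece of the skew $k$-quotient using the $k$-abacus of $\lambda$. Since $\BST(\lambda/\mu, dk)$ is non-empty, so is $\BST(\lambda/\mu, k)$ --- obtained by subdividing each strip of size $dk$ into $d$ consecutive substrips of size $k$ --- hence the $k$-cores of $\lambda$ and $\mu$ agree, the skew $k$-quotient $(\nu^1/\kappa^1, \ldots, \nu^k/\kappa^k)$ is well-defined, and the $k$-abacus of $\mu$ is obtained from that of $\lambda$ by moving beads up along their runners.

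The key observation is that removing a border-strip of size $dk$ from a partition corresponds on the $k$-abacus to moving one bead up by exactly $d$ positions on some runner $i$, and that on runner $i$ viewed as a $1$-abacus encoding $\nu^i$, such a $d$-position move is precisely the abacus operation for removing a border-strip of size $d$. Iterating, a tableau $B \in \BST(\lambda/\mu, dk)$ corresponds to an ordered sequence of $d$-step bead-moves on the $k$-abacus from $\lambda$ to $\mu$. Decomposing this sequence by runner gives a bijection between $\BST(\lambda/\mu, dk)$ and pairs $((B^1, \ldots, B^k), w)$, where each $B^i$ is an element of $\BST(\nu^i/\kappa^i, d)$ and $w$ is a word of length $N = \sum_i |\nu^i/\kappa^i|/d$ over $\{1, \ldots, k\}$ with letter $i$ occurring $|\nu^i/\kappa^i|/d$ times. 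In particular $d \mid |\nu^i/\kappa^i|$, since $B^i$ partitions $\nu^i/\kappa^i$ into strips of size $d$. Counting the two sides yields the claimed identity: $\prod_i |\BST(\nu^i/\kappa^i, d)|$ counts the tuples $(B^1, \ldots, B^k)$ and the multinomial coefficient counts the words $w$.

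The main delicate point is verifying that the row- and column-weakly-increasing conditions on the strip-labels in $B$ correspond exactly to the BST conditions on each $B^i$ together with an arbitrary choice of interleaving $w$. This is essentially the content of the Littlewood map from \cref{sec:abaci}, applied with parameter $k$ but in the generalized form where one reads off strips of size $d$ from each runner rather than single cells. The argument rests on the fact that two $dk$-strips whose minimal-content cells lie on different runners --- that is, have contents in different residue classes modulo $k$ --- can be reordered without violating the BST labeling condition, because the Littlewood map translates the linear order of labels in $B$ faithfully into a linear order of labels in each $B^i$ without creating cross-runner dependencies.
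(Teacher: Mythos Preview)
Your proof is correct, and it takes a genuinely different route from the paper's.

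You argue directly on the $k$-abacus: a $dk$-strip removal is a single bead moving up $d$ steps along one runner, and on that runner (read as a $1$-abacus for $\nu^i$) this is exactly a $d$-strip removal from $\nu^i$. Since a border-strip tableau is the same thing as a chain of partitions differing by single strips, a tableau in $\BST(\lambda/\mu,dk)$ is an ordered sequence of such bead moves, and sorting the moves by runner gives the bijection to $\big((B^1,\dots,B^k),w\big)$. This is a clean, self-contained bijective argument; your final paragraph about the ``delicate point'' is in fact already handled by the chain-of-partitions description and need not be singled out.

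The paper instead reduces to the classical $d=1$ case (the Littlewood map) applied twice. It first proves the structural fact that the $dk$-quotient of $\lambda/\mu$ coincides, as a multiset of skew shapes, with the union of the $d$-quotients of the $\nu^i/\kappa^i$, by an explicit comparison of bead positions on the $dk$-abacus versus the iterated $k$-then-$d$ abacus. It then expands $|\BST(\lambda/\mu,dk)|$ via the $dk$-quotient, expands each $|\BST(\nu^i/\kappa^i,d)|$ via its $d$-quotient, and matches the two using a multinomial identity. Your approach avoids ever mentioning the $dk$-quotient and is more elementary; the paper's approach, on the other hand, isolates the compatibility of iterated quotients as a separate reusable statement.
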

\begin{remark}
  The case $d=1$ for straight shapes is classical and can be found
  in~\cite[eq.~(2.7.32)]{JamesKerber1984} or \cite{FominLulov1997}.
  In this special case, the result also follows immediately from the Littlewood map.
\end{remark}
\begin{proof}
  We reduce the statement to the case $d=1$.  Let
  $\nu^{i,1}/\kappa^{i,1},\dots,\nu^{i,d}/\kappa^{i,d}$ be the
  $d$-quotient of $\nu^i/\kappa^i$, for $1\leq i\leq k$.  We first
  show that the collection of skew shapes
  $\nu^{i,j}/\kappa^{i,j}$ for $1\leq i\leq k$ and $1\leq j\leq d$
  coincides with the collection of skew shapes in the
  $dk$-quotient for $\lambda/\mu$.

  To do so, we show that the bead positions on the
  $\big(i+(j-1)k\big)^\thsup$ runner of the $dk$-abacus for
  $\lambda/\mu$ coincide with the bead positions on the $j^\thsup$
  runner of the $d$-abacus for the skew shape corresponding to the
  $i^\thsup$ runner of the $k$-abacus for $\lambda/\mu$, for all
  $1\leq i\leq k$ and $1\leq j\leq d$.

  Suppose that the labeled beads on the abacus for $\lambda/\mu$ are
  $b_1 < b_2 < \dots < b_\ell$.  Recall our convention that the
  number of beads on the abacus equals the number of rows of the
  outer partition.  Thus, the \emph{positions} ($0$ being topmost) of
  the beads on the $i^\thsup$ runner of the $k$-abacus for
  $\lambda/\mu$ are
  $\left\{\frac{b_m-(i-1)}{k}: b_m = i-1\pmod{k}\right\}$.

  Then, on the one hand, the $j^\thsup$ runner of the $d$-abacus for
  the skew shape corresponding to that runner has beads at
  positions
  \[
  \left\{\frac{\frac{b_m-(i-1)}{k}-(j-1)}{d}: b_m = i-1\!\pmod{k}
    \text{ and } \frac{b_m-(i-1)}{k}=j-1\!\pmod{d}\right\}.
  \]
  On the other hand, the $\big(i+(j-1)k\big)^\thsup$ runner of the
  $dk$-abacus for $\lambda/\mu$ has beads at positions
  $\left\{\frac{b_m-(i+(j-1)k-1)}{dk}: b_m =
    i+(j-1)k-1\pmod{dk}\right\}$, which are indeed the same.

  To conclude the argument, we compute
  \begin{align*}
    &\binom{|\lambda/\mu|/dk} %
    {|\nu^1/\kappa^1|/d,\dotsc,|\nu^k/\kappa^k|/d} %
    \prod_{i=1}^k |\BST(\nu^i/\kappa^i, d)|\\
    &= \binom{|\lambda/\mu|/dk} %
      {|\nu^1/\kappa^1|/d,\dotsc,|\nu^k/\kappa^k|/d} %
      \prod_{i=1}^k \binom{|\nu^i/\kappa^i|/d} %
      {|\nu^{i,1}/\kappa^{i,1}|,\dotsc,|\nu^{i,d}/\kappa^{i,d}|} %
      \prod_{j=1}^d |\SYT(\nu^{i,j}/\kappa^{i,j})|\\
    &= \binom{|\lambda/\mu|/dk} %
      {|\nu^{1,1}/\kappa^{1,1}|,\dotsc,|\nu^{k,d}/\kappa^{k,d}|} %
      \prod_{i=1}^k \prod_{j=1}^d |\SYT(\nu^{i,j}/\kappa^{i,j})|\\
    &=|\BST(\lambda/\mu, dk)|.
  \end{align*}
\end{proof}

\begin{example}
  Consider the skew shape $\lambda/\mu=(9,\!7,\!4^2,\!3^2,\!1)/(3,\!2,\!1^2)$ of $24$.
  The corresponding pair of $6$-abaci is as follows.  Note that the
  lower abacus, corresponding to the outer shape $\lambda$, is obtained from
  the upper abacus, corresponding to the inner shape $\mu$, by moving down
  beads along a runner:
  \[
    \begin{tikzpicture}[xscale=2, yscale=0.5, %
      bead/.style={shape=circle, draw, fill=white, minimum size=8pt, inner sep = 0pt,%
execute at begin node=$\scriptstyle,%
execute at end node=$%
      }, %
      nonbead/.style={shape=circle, fill=white, inner sep=0.2pt,%
execute at begin node=$\scriptstyle,%
execute at end node=$%
      },
      runnerlabel/.style={shape=circle,inner sep=0.2pt,%
execute at begin node=$\scriptstyle,%
execute at end node=$%
      },
      bullet/.style={shape=circle, fill=black, minimum size=8pt}]
      \draw (-0.25, 6.5) -- (5.25, 6.5);
      \draw (0, 5) -- (0, 6.5);
      \draw (1, 5) -- (1, 6.5);
      \draw (2, 5) -- (2, 6.5);
      \draw (3, 5) -- (3, 6.5);
      \draw (4, 5) -- (4, 6.5);
      \draw (5, 5) -- (5, 6.5);
      \node[bullet] at (0, 6) {};
      \node[nonbead] at (0, 5) {3};
      \node[bullet] at (1, 6) {};
      \node[bead] at (1, 5) {4};
      \node[bullet] at (2, 6) {};
      \node[nonbead] at (2, 5) {5};
      \node[nonbead] at (3, 6) {0};
      \node[bead] at (3, 5) {6};
      \node[bead] at (4, 6) {1};
      \node[bead] at (5, 6) {2};
      \draw (-0.25, 4.5) -- (5.25, 4.5);
      \draw (0, 2) -- (0, 4.5);
      \draw (1, 2) -- (1, 4.5);
      \draw (2, 2) -- (2, 4.5);
      \draw (3, 2) -- (3, 4.5);
      \draw (4, 2) -- (4, 4.5);
      \draw (5, 2) -- (5, 4.5);
      \node[nonbead] at (0, 4) {0};
      \node[nonbead] at (0, 3) {6};
      \node[bead] at (0, 2) {12};
      \node[runnerlabel] at (0, 1) {(2)};
      \node[bead] at (1, 4) {1};
      \node[bead] at (1, 3) {7};
      \node[nonbead] at (1, 2) {13};
      \node[runnerlabel] at (1, 1) {\emptyset};
      \node[nonbead] at (2, 4) {2};
      \node[bead] at (2, 3) {8};
      \node[nonbead] at (2, 2) {14};
      \node[runnerlabel] at (2, 1) {(1)};
      \node[nonbead] at (3, 4) {3};
      \node[nonbead] at (3, 3) {9};
      \node[bead] at (3, 2) {15};
      \node[runnerlabel] at (3, 1) {(2)/(1)};
      \node[bead] at (4, 4) {4};
      \node[nonbead] at (4, 3) {10};
      \node[runnerlabel] at (4, 1) {\emptyset};
      \node[bead] at (5, 4) {5};
      \node[nonbead] at (5, 3) {11};
      \node[runnerlabel] at (5, 1) {\emptyset};
    \end{tikzpicture}
  \]
  On the other hand, its pair of $3$-abaci is the following:
  \[
    \begin{tikzpicture}[xscale=2, yscale=0.5, %
      bead/.style={shape=circle, draw, fill=white, minimum size=8pt, inner sep = 0pt,%
execute at begin node=$\scriptstyle,%
execute at end node=$%
      }, %
      nonbead/.style={shape=circle, fill=white,inner sep=0.2pt,%
execute at begin node=$\scriptstyle,%
execute at end node=$},%
      runnerlabel/.style={shape=circle,inner sep=0.2pt,%
execute at begin node=$\scriptstyle,%
execute at end node=$%
      },
      bullet/.style={shape=circle, fill=black, minimum size=8pt}]
      \draw (-0.25, 10.5) -- (2.25, 10.5);
      \draw (0, 7) -- (0, 10.5);
      \draw (1, 7) -- (1, 10.5);
      \draw (2, 7) -- (2, 10.5);
      \node[bullet] at (0, 10) {};
      \node[nonbead] at (0, 9) {0};
      \node[nonbead] at (0, 8) {3};
      \node[bead] at (0, 7) {6};
      \node[bullet] at (1, 10) {};
      \node[bead] at (1, 9) {1};
      \node[bead] at (1, 8) {4};
      \node[bullet] at (2, 10) {};
      \node[bead] at (2, 9) {2};
      \node[nonbead] at (2, 8) {5};
      \draw (-0.25, 6.5) -- (2.25, 6.5);
      \draw (0, 1) -- (0, 6.5);
      \draw (1, 1) -- (1, 6.5);
      \draw (2, 1) -- (2, 6.5);
      \node[nonbead] at (0, 6) {0};
      \node[nonbead] at (0, 5) {3};
      \node[nonbead] at (0, 4) {6};
      \node[nonbead] at (0, 3) {9};
      \node[bead] at (0, 2) {12};
      \node[bead] at (0, 1) {15};
      \node[runnerlabel] at (0, 0) {\nu^1/\kappa^1=(4^2)/(2)};
      \node[bead] at (1, 6) {1};
      \node[bead] at (1, 5) {4};
      \node[bead] at (1, 4) {7};
      \node[nonbead] at (1, 3) {10};
      \node[nonbead] at (1, 2) {13};
      \node[runnerlabel] at (1, 0) {\nu^2/\kappa^2=\emptyset};
      \node[nonbead] at (2, 6) {2};
      \node[bead] at (2, 5) {5};
      \node[bead] at (2, 4) {8};
      \node[nonbead] at (2, 3) {11};
      \node[nonbead] at (2, 2) {14};
      \node[runnerlabel] at (2, 0) {\nu^3/\kappa^3=(1^2)};
    \end{tikzpicture}
  \]
  Finally, as illustration of the proof, consider the pair of
  $2$-abaci for $\nu^1/\kappa^1$, corresponding to the first runner
  in the $3$-abacus above.  Observe that the \emph{positions} of the
  beads, and therefore also the corresponding skew shapes, are
  the same as on the first and the fourth runner of the $6$-abacus:
  \[
    \begin{tikzpicture}[xscale=2, yscale=0.5, %
      bead/.style={shape=circle, draw, fill=white, minimum size=8pt, inner sep = 0pt,%
execute at begin node=$\scriptstyle,%
execute at end node=$%
      }, %
      nonbead/.style={shape=circle, fill=white,inner sep=0.2pt,%
execute at begin node=$\scriptstyle,%
execute at end node=$},%
      runnerlabel/.style={shape=circle,inner sep=0.2pt,%
execute at begin node=$\scriptstyle,%
execute at end node=$%
      },
      bullet/.style={shape=circle, fill=black, minimum size=8pt}]
      \draw (-0.25, 5.5) -- (1.25, 5.5);
      \draw (0, 4) -- (0, 5.5);
      \draw (1, 4) -- (1, 5.5);
      \node[bullet] at (0, 5) {};
      \node[nonbead] at (0, 4) {1};
      \node[nonbead] at (1, 5) {0};
      \node[bead] at (1, 4) {2};
      \draw (-0.25, 3.5) -- (1.25, 3.5);
      \draw (0, 1) -- (0, 3.5);
      \draw (1, 1) -- (1, 3.5);
      \node[nonbead] at (0, 3) {0};
      \node[nonbead] at (0, 2) {2};
      \node[bead] at (0, 1) {4};
      \node[runnerlabel] at (-0.1, 0) {\nu^{1,1}/\kappa^{1,1}=(2)};
      \node[nonbead] at (1, 3) {1};
      \node[nonbead] at (1, 2) {3};
      \node[bead] at (1, 1) {5};
      \node[runnerlabel] at (1.1, 0) {\nu^{1,2}/\kappa^{1,2}=(2)/(1)};
    \end{tikzpicture}
  \]
\end{example}

As a corollary we obtain a useful characterization of shapes with
precisely one border-strip tableau.
\begin{corollary}\label{lem:shapeAbacusPrimes}
  Let $\lambda/\mu$ be a skew shape of size $n$ and let $k$ be a
  positive integer with $k\mid n$.  Suppose that $\BST(\lambda/\mu, k)$
  is non-empty.

  Then $\BST(\lambda/\mu, k)$ contains precisely one element if and
  only if all skew shapes in the skew $k$-quotient of
  $\lambda/\mu$ are empty, with one one exception, which is either a
  single row $(n/k)$ or a single column $(1^{n/k})$.

  In this case $\lambda/\mu$ is a border-strip.  In particular,
  $\BST(\lambda/\mu, dk)$ contains precisely one element for all
  $d\mid\frac{n}{k}$.

\end{corollary}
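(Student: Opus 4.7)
The plan is to specialize Lemma~\ref{lem:stripHook} to $d=1$, yielding
\[
|\BST(\lambda/\mu, k)| = \binom{n/k}{|\nu^1/\kappa^1|, \dots, |\nu^k/\kappa^k|} \prod_{i=1}^k |\SYT(\nu^i/\kappa^i)|,
\]
where $(\nu^1/\kappa^1,\dots,\nu^k/\kappa^k)$ is the skew $k$-quotient of $\lambda/\mu$. This product equals $1$ precisely when the multinomial coefficient is $1$---forcing all components to be empty except for a single one of size $n/k$---and the remaining non-trivial SYT count also equals $1$.

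The key auxiliary fact is that $|\SYT(\nu/\kappa)|=1$ for a skew shape $\nu/\kappa$ if and only if it is a single row or a single column. Viewing SYTs as linear extensions of the cell poset, $|\SYT(\nu/\kappa)|=1$ exactly when that poset is a chain. If $\nu/\kappa$ has cells in at least two rows and at least two columns, I would pick a cell $(r_1,c_1)$ and another cell $(r_2,c_2)$ with $r_2>r_1$ and $c_2>c_1$; using that $\nu$ and $\kappa$ are partitions one verifies that $(r_1,c_2)$ and $(r_2,c_1)$ must then also belong to $\nu/\kappa$, and these two cells are incomparable, yielding at least two distinct SYTs. The converse is immediate since a single row or column is filled in the unique monotone way.

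For the claim that $\lambda/\mu$ itself is a border-strip, I would argue via the abacus. The single non-empty quotient component $\nu^j/\kappa^j$ being a single row of size $m=n/k$ on runner~$j$ corresponds to moving exactly one bead down by $m$ positions along runner~$j$; the single-column case corresponds to $m$ adjacent beads on runner~$j$ each shifting down by one. In either case, the net change on the full $1$-abacus is that one bead disappears at some position~$p$ and a new bead appears at position~$p+n$, which by the classical abacus--ribbon correspondence amounts to adding a single border-strip of size~$n$ to $\mu$. Hence $\lambda/\mu$ is a border-strip.

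Finally, for the \emph{in particular} assertion, the proof of Lemma~\ref{lem:stripHook} exhibits the $(dk)$-quotient of $\lambda/\mu$ as the collection of $d$-quotients of its $k$-quotient components. Since the $d$-quotient of a single row (respectively single column) of size $n/k$ again has exactly one non-empty component, itself a single row (respectively column) of size $n/(dk)$, with all others empty, the hypotheses of the first part of the corollary are satisfied with $k$ replaced by $dk$, so $|\BST(\lambda/\mu, dk)|=1$. The main technical subtlety is the abacus-to-ribbon translation---verifying that a single-bead displacement on the full $1$-abacus (possibly leapfrogging stationary beads on other runners) always produces exactly one connected border-strip.
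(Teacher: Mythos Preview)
Your reduction via Lemma~\ref{lem:stripHook} with $d=1$ is exactly the paper's argument for the first assertion. However, your justification of the auxiliary fact that $|\SYT(\nu/\kappa)|=1$ forces a single row or column is flawed: a skew shape with cells in at least two rows and two columns need not contain any pair $(r_1,c_1),(r_2,c_2)$ with $r_1<r_2$ and $c_1<c_2$---consider the straight shape $(3,1)$, the skew shape $(2,1)/(1)$, or indeed any ribbon. The conclusion is correct and standard (the paper simply asserts it), but your argument needs repair; one clean route is to show that if some row contains two cells and some other row is non-empty, then an incomparable pair can always be located using only the partition inequalities for $\nu$ and $\kappa$.

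For the border-strip conclusion and the final assertion you take a different route from the paper, and both of your arguments are sound. The paper first deduces connectedness (a disconnected shape already admits at least two border-strip tableaux) and then uses the Littlewood map to see that the minimal-content boxes of all strips in the unique tableau lie in a single residue class modulo $k$, forcing a ribbon; it then notes that a ribbon of size $n$ has a unique tiling by strips of any size dividing $n$. Your abacus approach---a single-row or single-column change on one runner amounts to displacing one bead by $n$ on the full $1$-abacus, hence adding a single $n$-ribbon to $\mu$---is correct and arguably more direct; the ``leapfrogging'' you worry about is precisely the standard fact that moving a bead from position $p$ to a vacant position $p+n$ on the $1$-abacus always adds a single ribbon of size $n$, the intervening beads affecting only its height. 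Your iterated-quotient argument for the final assertion is likewise valid.
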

\begin{proof}
  Let $(\nu^1/\kappa^1,\dotsc,\nu^k/\kappa^k)$ be the skew
  $k$-quotient of $\lambda/\mu$ and suppose that
  $\BST(\lambda/\mu, k)$ contains precisely one element.  Then,
  applying \cref{lem:stripHook} with $d=1$ we obtain that
  $|\BST(\lambda/\mu, k)|=1$ if and only if
  $|\BST(\nu^i/\kappa^i, 1)|=1$ for all $i$ and the multinomial
  coefficient in \cref{eq:stripHook} evaluates to $1$.

  Since $\BST(\nu^i/\kappa^i, 1)$ is the set of standard Young
  tableaux of shape $\nu^i/\kappa^i$, we have that
  $|\BST(\nu^i/\kappa^i, 1)|=1$ if and only $\nu^i/\kappa^i$ is a
  single row or a single column.  Furthermore, the multinomial
  coefficient equals $1$ if $|\nu^i/\kappa^i| = 0$ for all but one
  $i$.

  Finally, suppose that $\BST(\lambda/\mu, k)$ contains precisely one
  element.  It follows immediately that $\lambda/\mu$ is connected.
  Let $\nu^i/\kappa^i$ be the unique non-empty element in the
  quotient.  Then, by definition of the Littlewood map, the contents
  of the boxes with minimal content in $\lambda/\mu$ are all
  different, and all equivalent to $i\pmod k$.  Therefore,
  $\lambda/\mu$ must be a border-strip.

  This in turn implies that $\BST(\lambda/\mu, dk)$ is non-empty for
  all $d\mid\frac{n}{k}$, and therefore contains precisely one
  element.
\end{proof}

\section{Skew characters and border-strip tableaux}\label{sec:borderStrips}

In this section we show that, up to sign, the evaluation of a skew
character at a $d^\thsup$ power of a cycle equals the number of border
strip tableaux with all strips having size $d$. The non-skew case follows from a result by D.~White \cite{White1983}
and via a different technique by G.~James and A.~Kerber~\cite[Eq. 2.7.36]{JamesKerber1984}.
Our proof is slightly different and uses the techniques by I.~Pak~\cite{Pak2000Ribbon}.

\begin{definition}
  Let $T=(T^1,\dotsc,T^d) \in \SYTT(\lambda/\mu,d)$.  Suppose that,
  after swapping entries $i$ and $i+1$ in $T$, the resulting tuple is
  still an element of $\SYTT(\lambda/\mu,d)$.  We refer to such a
  transposition as a \defin{flip} on $T$.
\end{definition}

\begin{example}
	The two flips $(6,7)$ and $(9,10)$ send the tableau \eqref{eq:bstLittlewoodImage1} to the tableau \eqref{eq:bstLittlewoodImage2}.
\end{example}

\begin{lemma}\label{lem:flipConnected}
	All elements of $\SYTT(\lambda/\mu,d)$ are connected via a sequence of flips.
\end{lemma}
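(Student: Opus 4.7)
The plan is to recast $\SYTT(\lambda/\mu, d)$ as the set of linear extensions of a combinatorial poset and then reduce the statement to a classical fact about linear extensions. Concretely, let $P = P^1 \sqcup \dotsb \sqcup P^d$, where $P^j$ is the poset on the boxes of the skew shape $\lambda^j/\mu^j$ ordered by $(r_1, c_1) \leq (r_2, c_2)$ iff $r_1 \leq r_2$ and $c_1 \leq c_2$. A tuple $(T^1, \dotsc, T^d) \in \SYTT(\lambda/\mu, d)$ corresponds bijectively to a linear extension of $P$ by sending the entry $i$ to the unique box of $P$ it occupies. Moreover, swapping $i$ and $i+1$ yields another element of $\SYTT(\lambda/\mu,d)$ precisely when the corresponding boxes are incomparable in $P$: this is automatic if they lie in different components of $P$, and otherwise is the condition that they do not share a row or column. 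Hence flips are exactly the transpositions of consecutive labels sitting on incomparable elements of $P$.

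Given this identification, it remains to invoke the classical fact that the graph on linear extensions of any finite poset $P$, with edges given by such adjacent transpositions, is connected. I would prove it by induction on $N := |P|$. Let $\sigma,\tau$ be two linear extensions and let $p \in P$ be the element with $\tau$-label $N$, which is necessarily maximal in $P$. If the $\sigma$-label of $p$ is $k < N$, let $q$ be the element with $\sigma$-label $k+1$. Maximality of $p$ excludes $q > p$, while $q < p$ would force label$_\sigma(q) < $ label$_\sigma(p) = k$, contradicting label$_\sigma(q)=k+1$. So $p$ and $q$ are incomparable, and swapping their labels is a legal flip that raises the label at $p$ to $k+1$. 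Iterating, we reach a linear extension $\sigma'$ with label$_{\sigma'}(p) = N$, after which $\sigma'$ and $\tau$ both restrict to linear extensions of $P \setminus \{p\}$, and the inductive hypothesis applies.

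The main obstacle is not any deep combinatorial issue but rather the bookkeeping required to verify that the notion of flip in $\SYTT(\lambda/\mu,d)$ coincides exactly with transposition of incomparable consecutive labels in the disjoint poset $P$; in particular one must carefully handle the two cases (entries in different tableaux of the tuple versus entries in the same tableau) and confirm that no structural features of the $d$-quotient beyond the poset of each component play any role. Once this identification is in place, the connectivity argument is entirely formal and independent of the skew-quotient provenance of $P$.
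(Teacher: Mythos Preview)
Your proof is correct and follows essentially the same route as the paper: both arguments reduce connectivity to bubble-sorting via adjacent transpositions of consecutive labels sitting on incomparable boxes. The paper carries this out concretely by fixing a \emph{superstandard} tuple $S$ (labels fill $T^1$ first, then $T^2$, etc., each row by row) and showing that any $T$ can be sorted to $S$ by repeatedly flipping $j$ with $j-1$, while you package the identical mechanism abstractly as the standard connectivity result for linear extensions of the disjoint-union poset $P$; the underlying combinatorics is the same.
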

This lemma is essentially \cite[Thm. 3.2]{Pak2000Ribbon}, although it is stated in a slightly different way.
We therefore include a proof using the framework in this paper.
\begin{proof}
Let $(\lambda^1/\mu^1,\dotsc,\lambda^d/\mu^d)$ be the $d$-quotient of $\lambda/\mu$.
Let us first describe the \defin{superstandard filling}
$S \coloneqq (S^1,\dotsc, S^d)\in \SYTT(\lambda/\mu,d)$.  We will then show that $S$ can be obtained from any other tableau by a sequence of flips.

The boxes in $S^1$ are labeled with the numbers $1,\dotsc,|\lambda^1/\mu^1|$,
the boxes in $S^2$ are labeled with numbers $|\lambda^1/\mu^1|+1, \dotsc, |\lambda^1/\mu^1|+|\lambda^2/\mu^2|$, and so forth.
The entries in each tableau $S^i$ are then distributed in the lexicographically smallest fashion,
when reading row by row from top to bottom.

It now suffices to prove that for arbitrary $T \in \SYTT(\lambda/\mu,d)$, we can obtain $S$ from $T$ by a
sequence of flips. We describe a sorting algorithm which rearranges the labels,
starting with $1$ and then continuing with $2,3,\dotsc$ so that these labels agree with the corresponding entries in $S$.

Suppose that, at some point during the procedure, all boxes in $T$ with labels at most $i-1$
agree with $S$, but the box labeled $i$ in $S$ is labeled $j>i$ in $T$.
We then first flip $j$ and $j-1$, then $j-1$ and $j-2$, etc.\ until $i+1$ and $i$ are flipped, at which point $i$ is in the correct spot.
We show inductively that all these flips are possible to perform: by construction,
the boxes above and to the left of the box labeled with $j$ in $T$ must have labels smaller than $i < j-1$.
Because all tableaux in $T$ are standard, the boxes below and to the right of the box labeled with $j-1$ contain labels strictly larger than $j$.
Performing this algorithm in order for all $i=1,2,\dotsc$ ensures that we eventually reach $S$ from $T$.
\end{proof}

We shall now discuss the action of flipping $i$ and $i+1$ on a border-strip tableau.
By definition, and since the Littlewood map is bijective, the image of a flip also corresponds to a valid border-strip tableau.
There are two possible cases. Either, the strips labeled $i$ and $i+1$
are disconnected (no box in the first strip is horizontally or vertically adjacent to a box in the second),
in which case the labels are just interchanged.
The second case --- when the strips are connected as in \eqref{eq:flip} --- is more interesting.
\begin{align}\label{eq:flip}
\begin{ytableau}
\none & \none & \none & \none & \none & \none & *(white)\\
\none & \none & \none & *(lightgray) & *(lightgray)  & *(white) & *(white)  \\
\none & \none & *(lightgray) & *(lightgray)  & *(white) & *(white)  \\
\none & \none & *(lightgray) & *(white) & *(white) \\
*(lightgray) 1 & *(lightgray) & *(lightgray) & *(white) 2  \\
\end{ytableau}
\qquad
\substack{\text{flip} \\ \longleftrightarrow  }
\qquad
\begin{ytableau}
\none & \none & \none & \none & \none & \none & *(white) \\
\none & \none & \none & *(white) & *(white) & *(white) & \\
\none & \none & *(white) & *(white)  & *(lightgray)  & *(lightgray)  \\
\none & \none & *(white) 1 & *(lightgray) & *(lightgray)  \\
*(lightgray) 2 & *(lightgray) & *(lightgray) & *(lightgray)  \\
\end{ytableau}
\end{align}

By analyzing the Littlewood map (see \cite{Pak2000Ribbon})
one can verify that there is at least one
pair of boxes, one from each strip, with the same content. Furthermore, we know that
both the strips have the same length $d$.
From these observations together
with the properties of the Littlewood map, one can show that
the ``outside strip'' is moved to the ``inside'' of the second strip, as shown in \eqref{eq:flip}.
See also \cref{fig:BSTExample}, where strips labeled $(6,7)$ and $(9,10)$ have been flipped.

We are now ready to state the main property of flips.
\begin{lemma}[{\cite[Lem. 4.1]{Pak2000Ribbon}}]
 Suppose $B$ and $B'$  in $\BST(\lambda/\mu,d)$ are related by a flip.
 Then $(-1)^{\height(B)}= (-1)^{\height(B')}$.
\end{lemma}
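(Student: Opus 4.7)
The plan is to follow the case split already laid out in the excerpt. If $B_i$ and $B_{i+1}$ are disconnected, the flip merely relabels them without moving any cell, so $B$ and $B'$ coincide as multisets of unlabelled border-strips and their total heights agree identically. It therefore suffices to handle the connected case depicted in equation~\eqref{eq:flip}.

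In that case let $U = B_i \cup B_{i+1}$, a skew subshape of $\lambda/\mu$ that is preserved by the flip; only the partition of $U$ into two length-$d$ strips changes. Writing $R_j$ and $R'_j$ for the sets of rows occupied by the $j$th strip in $B$ and in $B'$ respectively, the identity $\height(\text{strip}) = |\text{rows}| - 1$ combined with inclusion--exclusion yields
\begin{equation*}
  \height(B_i) + \height(B_{i+1}) = |R_i \cup R_{i+1}| + |R_i \cap R_{i+1}| - 2,
\end{equation*}
and similarly for $B'$. Since $R_i \cup R_{i+1} = R'_i \cup R'_{i+1}$ is just the set of rows spanned by $U$, the lemma reduces to the parity identity $|R_i \cap R_{i+1}| \equiv |R'_i \cap R'_{i+1}| \pmod 2$.

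Establishing this parity identity is where I expect the main difficulty to lie. I would approach it either geometrically, from the explicit picture in equation~\eqref{eq:flip}, by tracking how passing the inner strip past the outer one alters the set of overlap rows by an even number; or, more conceptually, via the Littlewood map: each length-$d$ strip in $\BST(\lambda/\mu,d)$ corresponds to sliding a single bead one slot down its runner on the $d$-abacus, and its height equals the number of beads on other runners strictly between the source and the target position. A flip commutes two such bead slides, and a short case analysis distinguishing whether the two moves lie on the same runner or on different runners shows that, although each of the two individual heights may change, their sum is invariant modulo $2$. This yields $(-1)^{\height(B)} = (-1)^{\height(B')}$ as desired.
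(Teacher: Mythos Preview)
Your disconnected case and your inclusion--exclusion reduction are both correct: the problem does reduce to showing that $|R_i\cap R_{i+1}|$ and $|R'_i\cap R'_{i+1}|$ have the same parity. The gap is that you never establish this. You write that you ``would approach it either geometrically\dots\ or\dots\ via the Littlewood map'' and outline two possible routes, but execute neither. The abacus sketch is vaguer than it looks: when the two bead slides interact (one bead lies in the interval traversed by the other), each individual height depends on the order of the moves, and the assertion that their sum is invariant modulo~$2$ is exactly the point at issue, not a consequence of the case split you announce.

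The paper closes the gap with a single geometric observation that renders your reduction unnecessary. In the connected case, the union $U$ of the two strips splits into three pieces according to content: a non-empty \emph{middle} consisting of all pairs of boxes, one from each strip, sharing the same content, together with the portions of $U$ lying to its left and to its right. The flip fixes the left and right pieces and swaps the two strips in the middle. Consequently each strip either gains or loses exactly one row, and both do so with the same sign; hence $\height(B_i)+\height(B_{i+1})$ changes by $\pm 2$. In your language this says $|R_i\cap R_{i+1}|$ changes by exactly $\pm 2$, so your parity claim does hold, but the direct argument reaches the conclusion without the detour.
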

\begin{proof}
 Since the height is the sum of the heights of the individual strips,
 it suffices to consider the strips labeled $i$ and $i+1$.
 If these strips are disconnected, then the flip does not alter the
 height of the resulting border-strip tableau.
 It remains to prove the assertion when the strips are connected as in \eqref{eq:flip}.

The two strips form a skew Young tableau, which
can be divided into three parts:
a (non-empty) middle part which consists of all pairs of boxes $(b_1,b_2)$ whose contents are equal,
a part to the left of this middle part and a part to the right of this middle part.
A flip fixes the left and right part while it swaps the boxes
in the middle part. Hence, the strips either both increase or both decrease in height by $1$ and the lemma follows.
\end{proof}

Together with \cref{thm:rootEvaluationGivesCharacter} we can now
deduce the main result of this section.

\begin{corollary}\label{cor:characterAsBST}
Let $\lambda/\mu$ be a skew shape of size $n=dm$.
Then the signed sum
\begin{equation}\label{eq:mnRuleCancellationFree}
\chi^{\lambda/\mu}((m^d)) =
\sum_{B \in \BST\left(\lambda/\mu, m \right)} (-1)^{\height(B)}
\end{equation}
in the Murnaghan--Nakayama rule~\cref{thm:mnRule} is cancellation-free.
In particular, we have that
\[
  f^{\lambda/\mu}(\xi^d) = \chi^{\lambda/\mu}((m^d)) = \varepsilon |\BST(\lambda/\mu,m)|,
\]
where $\xi$ is a primitive $n^\thsup$ root of unity and
$\varepsilon=(-1)^{\height(B)}$ for any
$B\in \BST\left(\lambda/\mu, m \right)$.
\end{corollary}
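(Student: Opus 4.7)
The plan is to chain together the results of \cref{sec:abaci,sec:borderStrips}. The key observation is that all the machinery needed is already in place: the Murnaghan--Nakayama rule \cref{thm:mnRule} expresses $\chi^{\lambda/\mu}((m^d))$ as a signed sum over $\BST(\lambda/\mu,m)$, and I need to show every summand carries the same sign.

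First, if $\BST(\lambda/\mu,m)$ is empty, both sides vanish and the claim is trivial. Otherwise, apply the Littlewood map (with parameter $d$ replaced by $m$) to obtain a bijection between $\BST(\lambda/\mu,m)$ and $\SYTT(\lambda/\mu,m)$. By \cref{lem:flipConnected}, any two elements of $\SYTT(\lambda/\mu,m)$ are connected by a sequence of flips, so the same is true for $\BST(\lambda/\mu,m)$ via the preceding discussion that transports flips across the Littlewood map. Next, invoke the flip--height invariance lemma (immediately above the corollary): every flip preserves the parity of the height of the border-strip tableau. Combining these facts, all $B \in \BST(\lambda/\mu,m)$ share a common sign $\varepsilon = (-1)^{\height(B)}$, and \cref{eq:mnRule} collapses into
\[
  \chi^{\lambda/\mu}((m^d)) = \varepsilon\,|\BST(\lambda/\mu,m)|,
\]
which is precisely the cancellation-free statement \eqref{eq:mnRuleCancellationFree}.

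Finally, equate this with \cref{thm:rootEvaluationGivesCharacter}, which gives $\chi^{\lambda/\mu}((m^d)) = f^{\lambda/\mu}(\xi^d)$, to conclude $f^{\lambda/\mu}(\xi^d) = \varepsilon\,|\BST(\lambda/\mu,m)|$.

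There is no real obstacle here: the corollary is essentially an assembly of the preceding lemmas. The one subtlety worth flagging is that flips were defined on $\SYTT(\lambda/\mu,d)$ but the sign argument lives on $\BST(\lambda/\mu,d)$; this is bridged by the bijectivity of the Littlewood map together with the description of how a flip acts on the two connected strips (equation \eqref{eq:flip}), which was already verified in the text. So the proof reduces to assembling these pieces and handling the vacuous case when no border-strip tiling exists.
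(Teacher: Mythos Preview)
Your proposal is correct and follows exactly the approach the paper intends: the corollary is stated immediately after the flip-connectedness lemma and the flip--height invariance lemma with the sentence ``Together with \cref{thm:rootEvaluationGivesCharacter} we can now deduce the main result of this section,'' and your write-up simply spells out this assembly (connectedness $\Rightarrow$ constant sign $\Rightarrow$ cancellation-free, then apply \cref{thm:rootEvaluationGivesCharacter}).
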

Observe that the non-skew case of the above corollary was proved
already in \cite[Thm. 2.7.27]{JamesKerber1984} using the abacus model.

\section{Bounds on the number of border-strip tableaux}\label{sec:BSTBounds}

The goal of this section is to prove the following theorem.
\begin{theorem}\label{thm:BST-bound-skew}
  Let $\lambda/\mu$ be a skew shape with $n$ boxes and let $k$ be a
  positive integer with $k\mid n$.  Suppose that
  $|\BST(\lambda/\mu, k)| \geq 2$.  Then
  \[
  |\BST(\lambda/\mu, k)| \geq %
  \sum_{d\mid\frac{n}{k}, d > 1} |\BST(\lambda/\mu, dk)|.
  \]
  Additionally, the inequality holds if $n/k$ is a prime number.
\end{theorem}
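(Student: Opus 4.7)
The plan is to reduce the inequality to estimates on smaller pieces via the skew $k$-quotient decomposition in Lemma~\ref{lem:stripHook}, and then control the resulting ratios using the Fomin--Lulov bound. Let $(\nu^1/\kappa^1,\ldots,\nu^k/\kappa^k)$ be the skew $k$-quotient of $\lambda/\mu$ and put $n_i=|\nu^i/\kappa^i|$, so $n_1+\cdots+n_k=n/k$. Lemma~\ref{lem:stripHook} with $d=1$ gives
\[
|\BST(\lambda/\mu,k)| = \binom{n/k}{n_1,\ldots,n_k}\prod_{i=1}^k|\SYT(\nu^i/\kappa^i)|,
\]
while for each divisor $d>1$ of $n/k$, $|\BST(\lambda/\mu,dk)|$ vanishes unless $d\mid n_i$ for every $i$, in which case
\[
|\BST(\lambda/\mu,dk)| = \binom{n/(dk)}{n_1/d,\ldots,n_k/d}\prod_{i=1}^k|\BST(\nu^i/\kappa^i,d)|.
\]
So the inequality reduces to comparing these two products for each $d$, and summing over $d$.

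Next I would apply the Fomin--Lulov bound~\cite{FominLulov1997}, which controls each factor $|\BST(\nu^i/\kappa^i,d)|$ by $|\SYT(\nu^i/\kappa^i)|$ times a factor depending only on $n_i$ and $d$ that decays rapidly as $d$ grows. Combining this with the elementary inequality
\[
\binom{n/(dk)}{n_1/d,\ldots,n_k/d}\leq \binom{n/k}{n_1,\ldots,n_k},
\]
which holds because aggregating labels into blocks of size $d$ reduces the number of orderings, one gets an overall bound $|\BST(\lambda/\mu,dk)|\leq \eta(d)\,|\BST(\lambda/\mu,k)|$ with $\eta(d)$ decaying fast enough in $d$ that $\sum_{d\mid n/k,\,d>1}\eta(d)\leq 1$. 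Summing over $d>1$ then yields the desired inequality.

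The main obstacle is to show that this decay is uniform enough in the shape of the $k$-quotient. When some $n_i$ is small (or zero), the Fomin--Lulov factor may degrade; the hypothesis $|\BST(\lambda/\mu,k)|\geq 2$ is needed precisely to exclude the only genuinely degenerate case, namely when $\lambda/\mu$ is itself a border-strip (Corollary~\ref{lem:shapeAbacusPrimes}), where all $|\BST(\lambda/\mu,dk)|$ equal $1$ and the plain inequality can fail if $n/k$ has many divisors. The argument therefore splits into a case analysis on the structure of the $k$-quotient, handling the cases where only one $\nu^i/\kappa^i$ is nonempty separately via Corollary~\ref{lem:shapeAbacusPrimes}.

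For the additional statement when $n/k$ is prime, the sum on the right collapses to the single term $|\BST(\lambda/\mu,n)|$, which is $0$ or $1$. If it equals $1$, Corollary~\ref{lem:shapeAbacusPrimes} implies $\lambda/\mu$ is a border-strip, whence $|\BST(\lambda/\mu,k)|=1$ as well and equality holds; otherwise the inequality is trivial. This explains why the restriction $|\BST(\lambda/\mu,k)|\geq 2$ can be dropped in the prime case.
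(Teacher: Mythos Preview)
Your overall architecture---pass to the $k$-quotient via \cref{lem:stripHook}, control each factor, and sum over $d$---matches the paper's Section~7.4. But there is a genuine gap at the step where you invoke the Fomin--Lulov bound.

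The Fomin--Lulov inequality~\eqref{eq:qndDef} is proved only for \emph{straight} shapes $\lambda\vdash n$; its proof uses the hook-length formula. The pieces $\nu^i/\kappa^i$ of the skew $k$-quotient are genuinely skew, and no skew analogue of Fomin--Lulov is available. So the sentence ``apply the Fomin--Lulov bound, which controls each factor $|\BST(\nu^i/\kappa^i,d)|$ by $|\SYT(\nu^i/\kappa^i)|$ times a factor depending only on $n_i$ and $d$'' does not go through. The paper closes this gap by an extra layer of reduction you are missing: it first proves the $k=1$ case for straight shapes using Fomin--Lulov (Section~7.2), then lifts that to the $k=1$ skew case via the Littlewood--Richardson rule and the triangle inequality for characters (\cref{thm:BST-bound-1}), and only then feeds the resulting inequality $|\BST(\nu^i/\kappa^i,1)|\geq\sum_{d>1}|\BST(\nu^i/\kappa^i,d)|$ into the $k$-quotient product. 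In particular, the paper never bounds a single term $|\BST(\lambda/\mu,dk)|$ against $|\BST(\lambda/\mu,k)|$; it sums over $d$ \emph{before} taking the product over $i$, which is what makes the already-summed $k=1$ skew bound usable. Your crude multinomial inequality is also too weak for this route: the paper needs the sharp lower bound of \cref{cor:binomial-approximation} on the multinomial ratio precisely to absorb the factors $\tau(|\nu^i/\kappa^i|)-1$ that appear when some $\nu^i/\kappa^i$ is a single row or column.

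A smaller point: your argument for the prime case overreaches. From $|\BST(\lambda/\mu,n)|=1$ you correctly get that $\lambda/\mu$ is a border strip, but \cref{lem:shapeAbacusPrimes} does \emph{not} then force $|\BST(\lambda/\mu,k)|=1$; that implication goes the other way. All you need, and all that is true, is $|\BST(\lambda/\mu,k)|\geq 1$, which follows because a single $n$-strip on the $k$-abacus decomposes into $n/k$ single-step bead moves.
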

\begin{example}
  For $\lambda=(10,\!1^2) \vdash 12$ we have $|\BST(\lambda, 3)| = 1$ and
  $|\BST(\lambda, 6)|+|\BST(\lambda, 12)| = 2$.  By contrast, for
  $\lambda=(9,\!1^3) \vdash 12$, we have $|\BST(\lambda, 1)|=165$,
  $|\BST(\lambda, 2)|=5$, $|\BST(\lambda, 3)|=3$ and
  $|\BST(\lambda, 4)|=|\BST(\lambda, 6)|=|\BST(\lambda, 12)|=1$, and
  therefore
  \[
  |\BST(\lambda, k)| \geq \sum_{d\mid \frac{n}{k}, d > 1} |\BST(\lambda, d k)|
  \]
  for all $k$.
\end{example}
\begin{remark}
  For $k=1$, apart from the single row and single column partitions,
  there are only three shapes $\lambda/\mu$ where equality is
  attained: $(2^2)$, $(3^2)$ and $(2^3)$.  Other than that, the
  minimal difference between the two sides of the inequality is
  attained for hooks of the form $(n-1,\!1)$.  In this case it equals
  $n-\tau(n)$, where $\tau(n)$ is the number of divisors of $n$.
\end{remark}

Our strategy is to reduce the theorem to the case of straight shapes
and $k=1$, which we prove in \cref{sec:straight-case}, employing
a bound due to S.~Fomin \& N.~Lulov.

In \cref{sec:skew-case-1} we extend this to the case of skew shapes
and $k=1$, essentially using the Littlewood--Richardson rule.

Finally, in \cref{sec:skew-case} we deduce the general case from
the inequality with $k=1$, using a bound on the quotient of a
multinomial coefficient and a multinomial coefficient with stretched
entries proved in \cref{sec:bounds-mult-coeff} and \cref{lem:stripHook} and \cref{lem:shapeAbacusPrimes}.

\subsection{Bounds on multinomial coefficients}
\label{sec:bounds-mult-coeff}
We first prove two inequalities related to multinomial coefficients. For this we use the approximation due to H.~Robbins.

\begin{proposition}[\cite{Robbins1955}]\label{thm:factorialBounds}
For any positive integer $n$,
  \begin{gather}\label{eq:robbins}
    n! = \sqrt{2\pi} n^{n+1/2} e^{-n+r_n}
    \quad
    \text{ for some }
    \quad
    \frac{1}{12n+1} < r_n < \frac{1}{12n}.
  \end{gather}
\end{proposition}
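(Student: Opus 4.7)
The plan is to follow Robbins' classical argument, studying the auxiliary sequence $a_n := \log n! - (n+\tfrac12)\log n + n$ so that \eqref{eq:robbins} is equivalent to the two-sided bound
\begin{equation*}
\tfrac{1}{12n+1} < a_n - \log\sqrt{2\pi} < \tfrac{1}{12n}.
\end{equation*}
The entire argument will be organised around the telescoping differences $a_n - a_{n+1}$, from which both the existence of the limit $\lim_n a_n$ and its precise two-sided error estimate will fall out.

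I would first compute $a_n - a_{n+1} = (n+\tfrac12)\log\tfrac{n+1}{n} - 1$, and then, setting $t = \tfrac{1}{2n+1}$ so that $\tfrac{n+1}{n}=\tfrac{1+t}{1-t}$, expand using the standard series for $\log\tfrac{1+t}{1-t}$ to obtain
\begin{equation*}
a_n - a_{n+1} = \sum_{j=1}^{\infty}\frac{t^{2j}}{2j+1}.
\end{equation*}
A geometric-series upper bound immediately gives $a_n - a_{n+1} < \tfrac{t^2/3}{1-t^2} = \tfrac{1}{12n(n+1)} = \tfrac{1}{12n} - \tfrac{1}{12(n+1)}$, so $a_n$ is strictly decreasing while $a_n - \tfrac{1}{12n}$ is strictly increasing. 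Both sequences therefore converge to a common limit $C$, and telescoping the upper bound yields $a_n - C < \tfrac{1}{12n}$.

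To identify $C=\log\sqrt{2\pi}$ I would invoke Wallis' product in the form $\tfrac{2^{4n}(n!)^4}{((2n)!)^2(2n+1)}\to\tfrac{\pi}{2}$ and substitute the asymptotic $n!\sim e^C n^{n+1/2}e^{-n}$, which forces $e^{2C}=2\pi$. The main technical obstacle is the matching lower bound $a_n - a_{n+1} > \tfrac{1}{12n+1} - \tfrac{1}{12(n+1)+1}$, since the naive geometric estimate is not sharp enough to hit the denominator $12n+1$. I would handle this by clearing denominators and reducing to a polynomial inequality in $t$, for instance by checking that the difference between the series $\sum_{j\ge 1}\tfrac{t^{2j}}{2j+1}$ and the closed form $\tfrac{1}{12n+1} - \tfrac{1}{12n+13}$ admits only nonnegative Taylor coefficients in $t$ at the origin. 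Telescoping this refined inequality then yields $r_n > \tfrac{1}{12n+1}$ and completes the proof.
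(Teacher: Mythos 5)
The paper does not actually prove this proposition --- it is cited directly from Robbins' 1955 note --- so the comparison is against Robbins' argument. Your outline reproduces that argument correctly through the telescoping setup, the series identity $a_n-a_{n+1}=\sum_{j\ge1}\frac{t^{2j}}{2j+1}$, the geometric upper bound, and the Wallis-product identification of the constant.

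However, the specific mechanism you propose for the lower bound does not work. With $t=\tfrac1{2n+1}$ one has $\tfrac{1}{12n+1}-\tfrac{1}{12n+13}=\tfrac{12t^2}{36+12t-35t^2}$, and expanding this rational function in powers of $t$ gives
\[
\frac{12t^2}{36+12t-35t^2}=\frac{t^2}{3}-\frac{t^3}{9}+\frac{13\,t^4}{36}-\frac{37\,t^5}{162}+\cdots .
\]
Subtracting from $\sum_{j\ge 1}\frac{t^{2j}}{2j+1}=\frac{t^2}{3}+\frac{t^4}{5}+\frac{t^6}{7}+\cdots$ yields a series whose $t^4$-coefficient is $\frac15-\frac{13}{36}=-\frac{29}{180}<0$ (the $t^6$-coefficient is negative as well), so the difference does \emph{not} have nonnegative Taylor coefficients, and the proposed coefficientwise argument fails. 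The inequality itself is true for the relevant values $t=\tfrac1{2n+1}$ only because the positive $t^3$ term dominates; this is not visible coefficientwise.

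The standard repair, which is what Robbins actually does, is to bound $2j+1\le 3^j$ for $j\ge 1$, giving
\[
\sum_{j\ge1}\frac{t^{2j}}{2j+1}\;>\;\sum_{j\ge1}\frac{t^{2j}}{3^{\,j}}
=\frac{t^2/3}{1-t^2/3}=\frac{1}{12n^2+12n+2},
\]
the inequality being strict for $t\neq 0$ because of the $j\ge 2$ terms. Comparing denominators, $\frac{1}{12n^2+12n+2}>\frac{12}{(12n+1)(12n+13)}=\frac{1}{12n+1}-\frac{1}{12n+13}$ reduces to $24n>11$, which holds for $n\ge1$. Telescoping this refined inequality then gives $r_n>\tfrac{1}{12n+1}$ as you intended. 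With that replacement, the proof is complete and matches Robbins'.
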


\begin{lemma}\label{lem:multinomQuotientIneq}
  For any positive integer $d$ and positive integers
  $m_1,\dotsc,m_k$ summing to $m$,
  \begin{equation}\label{eq:multinomQuotientIneq}
  \frac{
    \binom{dm}{d m_1,\dotsc,d m_k} }{ \binom{m}{m_1,\dotsc,m_k} } %
    \geq %
    \frac{1}{d^{(k-1)/2}}\left(\frac{m^m}{\prod_{j=1}^k m_j^{m_j}}\right)^{d-1}.
\end{equation}
\end{lemma}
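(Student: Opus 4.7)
The plan is to apply Robbins' bound \eqref{eq:robbins} to each factorial appearing in the left-hand side of \eqref{eq:multinomQuotientIneq}, namely to $(dm)!$, each $(dm_j)!$, $m!$ and each $m_j!$. Writing $n! = \sqrt{2\pi}\,n^{n+1/2} e^{-n+r_n}$ and using $\sum_j m_j = m$ (hence also $\sum_j dm_j = dm$), every $e^{-n}$-contribution cancels exactly, the $\sqrt{2\pi}$-factors in numerator and denominator match, and the remaining powers of $d$, $m$, and the $m_j$ collect to give the exact identity
\begin{equation*}
\frac{\binom{dm}{dm_1,\dotsc,dm_k}}{\binom{m}{m_1,\dotsc,m_k}}
=
\frac{1}{d^{(k-1)/2}}
\left(\frac{m^m}{\prod_{j=1}^k m_j^{m_j}}\right)^{d-1}
e^{E},
\end{equation*}
where $E := r_{dm} - r_m + \sum_{j=1}^k(r_{m_j} - r_{dm_j})$. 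Thus \eqref{eq:multinomQuotientIneq} is reduced to the assertion $E \geq 0$.

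To prove $E\ge 0$ I would introduce the pair function $\Delta(a,b) := r_a + r_b - r_{a+b}$ and use the telescoping identity
\[
E = \sum_{i=1}^{k-1}\Bigl(\Delta(m_1 + \dotsb + m_i,\, m_{i+1}) - \Delta\bigl(d(m_1 + \dotsb + m_i),\, dm_{i+1}\bigr)\Bigr),
\]
which reduces the problem to showing $\Delta(a,b) \geq \Delta(da,db)$ for all positive integers $a,b$ and every positive integer $d$. Extending $r$ to a smooth function on $(0,\infty)$ via Binet's integral representation makes it immediate that $r$ is strictly decreasing and strictly convex. Setting $g(t) := r_{ta} + r_{tb} - r_{t(a+b)}$, we have $g(1) = \Delta(a,b)$ and $g(d) = \Delta(da,db)$, and convexity gives
\[
g'(t) = a\, r'(ta) + b\, r'(tb) - (a+b)\, r'(t(a+b)) \leq 0,
\]
because $r'(ta),\, r'(tb) \leq r'(t(a+b))$ by monotonicity of $r'$. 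Hence $g$ is non-increasing on $[1,\infty)$, so $g(d) \leq g(1)$, which is exactly the bound we need.

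The main obstacle will be the last step. The inequality in \eqref{eq:multinomQuotientIneq} is tight both at $d=1$ and at $k=1$, so one cannot afford any slack: the one-sided bounds $\tfrac{1}{12n+1} < r_n < \tfrac{1}{12n}$ plugged in term by term into $E$ are too crude. The essential input is a second-order property of $r_n$, namely convexity as a function of a real variable, which is transparent from Binet's integral but would require a slightly more delicate argument if one insisted on using only Robbins' inequality; in that case one can instead verify the discrete convexity $r_{n-1}+r_{n+1} \geq 2r_n$ directly, which still powers the telescoping step.
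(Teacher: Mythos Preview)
Your argument is correct. The exact identity with the error term $E$ is right, the telescoping into pair terms $\Delta(a,b)-\Delta(da,db)$ checks out, and the convexity of the Stirling remainder (clear from Binet's second formula $r_x = 2\int_0^\infty \frac{\arctan(t/x)}{e^{2\pi t}-1}\,dt$) does give $g'(t)\le 0$ and hence $E\ge 0$.

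Your route differs from the paper's. The paper also reaches the identity with $e^E$, but then does \emph{not} invoke convexity: it first discards the trivial cases $d=1$ and $k=1$, and for $d\ge 2$, $k\ge 2$ it plugs the crude one-sided Robbins bounds directly into $E$, reducing the problem to the elementary inequality
\[
\frac{1}{dm+\varepsilon}+\sum_j\frac{1}{m_j+\varepsilon}\;\ge\;\frac{1}{m}+\sum_j\frac{1}{dm_j}\qquad(\varepsilon=\tfrac{1}{12}),
\]
which it verifies by a short estimate. So your claim that the one-sided bounds are ``too crude'' is not quite right: they are too crude near the tight boundaries $d=1$ or $k=1$, but after excising those cases there is enough slack. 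Your convexity argument is cleaner and handles all $d,k$ uniformly without case analysis; the paper's argument is more self-contained in that it needs nothing beyond the explicit Robbins inequalities.
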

\begin{proof}
  For $d=1$ or $k=1$, the statement is trivial, so we can assume $d>1$ and $k>1$.

The left hand side of \eqref{eq:multinomQuotientIneq} is
equal to
\[
\frac{(d m)!}{\prod_j (d m_j)!}\frac{\prod_j (m_j)!}{m!},
\]
which by \eqref{eq:robbins} is then larger than or equal to
\begin{align*}
 \frac{m^{(d-1)m}}{d^{(k-1)/2}\prod_i m_i^{(d-1)m_i}} %
   \exp\left(\frac{1}{12dm+1}-\frac{1}{12m}+\sum_j \frac{1}{12m_j+1}
   - \frac{1}{12dm_j}\right).
  \end{align*}
  It remains to show that, for $\varepsilon=1/12$,
  \[
    \frac{1}{dm+\varepsilon}+\sum_j \frac{1}{m_j+\varepsilon}
    \geq \frac{1}{m} + \sum_j\frac{1}{dm_j},
  \]
  provided $k>1$ and $d>1$.  Set $M = m\sum_j \frac{1}{m_j}$ and notice
  that $M\geq k\geq 2$.  Furthermore,
  $m\sum_j \frac{1}{m_j+\varepsilon} \geq
  \frac{m}{1+\varepsilon}\sum_j \frac{1}{m_j} =
  \frac{1}{1+\varepsilon} M$ and
  $\frac{dm}{dm+\varepsilon}\geq\frac{1}{1+\varepsilon}$.  Thus, it
  suffices to prove that, for $d\geq 2$, $M\geq 2$ and
  $\varepsilon=1/12$,
  \[
    \frac{1}{1+\varepsilon}(1 + dM) \geq d + M.
  \]
  This can be seen, for example, by replacing $d$ with $2+\tilde{d}$
  and $M$ with $2+\tilde{M}$.
\end{proof}

\begin{corollary}\label{cor:binomial-approximation}
  For any integer $d>1$ and $k>1$ positive integers $m_1,\dots,m_k$ summing to m,
  \[
    \frac{ \binom{dm}{d m_1,\dotsc,d m_k} }{ \binom{m}{m_1,\dotsc,m_k} } %
    \geq %
    \prod_{j=1}^k (\tau(d m_j)-1),
  \]
  where $\tau(n)$
  is the number of divisors of $n$.
\end{corollary}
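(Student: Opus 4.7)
By \cref{lem:multinomQuotientIneq}, writing $M := m^m/\prod_j m_j^{m_j}$, it suffices to show
\[
\frac{M^{d-1}}{d^{(k-1)/2}} \geq \prod_{j=1}^k \bigl(\tau(dm_j) - 1\bigr).
\]
To estimate each factor on the right I would invoke the elementary bound $\tau(n) - 1 \leq n/2$, valid for every $n \geq 1$ since every divisor of $n$ distinct from $n$ is at most $n/2$. Applied factor by factor, this reduces the task to the purely arithmetic inequality
\[
2^k\,M^{d-1} \geq d^{(3k-1)/2} \prod_{j=1}^k m_j.
\]

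To establish this reduced inequality, I would use the representation $M = \prod_j (m/m_j)^{m_j}$ together with Bernoulli's inequality, which gives $(m/m_j)^{m_j} \geq 1 + m_j\bigl(m/m_j - 1\bigr) = m - m_j + 1$ for $m_j \geq 1$, and hence $M \geq \prod_j (m - m_j + 1)$. Since the left-hand side grows exponentially in $d-1$ whereas the right-hand side grows only polynomially in $d$, the regime $d$ large is automatic; the only delicate case is $d = 2$, where the inequality simplifies to $M \geq 2^{(k-1)/2} \prod_j m_j$.

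The main obstacle I anticipate is verifying this last inequality uniformly in $k$ and in $(m_1,\dots,m_k)$. I would proceed by splitting into two regimes. When some coordinate dominates, say $m_k$ is close to $m$, the Bernoulli estimate $M \geq \prod_j (m - m_j + 1)$ already overwhelms $\prod_j m_j$ because the dominating coordinate contributes a tiny factor $m - m_k + 1$ on the left but a large factor $m_k$ on the right, while the remaining (small) coordinates contribute factors of order $m$ on the left against $m_j \geq 1$ on the right. When the coordinates are comparable, the sharper estimate $(m/m_j)^{m_j}$ grows super-linearly in $m_j$, making $M$ itself grow roughly like $k^m$ by the entropy interpretation $\log M = \sum m_j \log(m/m_j)$, which easily dominates the polynomial $2^{(k-1)/2}\prod_j m_j$. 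The two regimes can be bridged with a careful application of AM--GM to the constraint $\sum m_j = m$, checking the tightest remaining cases (such as $d=2$, $k=2$, $m_1=m_2=1$, where the inequality reads $16 \geq 2^{5/2}$) directly.
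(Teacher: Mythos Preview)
Your initial reduction is correct and identical to the paper's: invoke \cref{lem:multinomQuotientIneq} and the divisor bound $\tau(n)-1\le n/2$ to reduce the problem to the purely analytic inequality
\[
  2^k M^{d-1}\ \ge\ d^{(3k-1)/2}\prod_{j=1}^k m_j,\qquad M=\frac{m^m}{\prod_j m_j^{m_j}}.
\]

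Where you diverge from the paper is in the subsequent analysis, and this is where the proposal has genuine gaps. The paper does \emph{not} attempt to treat $d$ and $(m_1,\dots,m_k)$ simultaneously. Instead it passes to the stronger, separated inequality
\[
  \Bigl(\tfrac{M}{\prod_j m_j}\Bigr)^{1/k}\ >\ \Bigl(\tfrac{d\sqrt d}{2}\Bigr)^{1/(d-1)},
\]
whose right-hand side depends only on $d$ and whose left-hand side depends only on the $m_j$. One then checks that the right-hand side, over integers $d\ge 2$, is maximised at $d=3$ with value $(27/4)^{1/4}\approx 1.61$, while the left-hand side is bounded below by $\sqrt e\approx 1.65$, via a short monotonicity analysis of $h(z)=(z+\check m)^{z+\check m}/z^{z+1}$. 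This decoupling is the main idea you are missing.

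Concretely, two steps in your outline do not go through as written. First, the Bernoulli estimate $M\ge\prod_j(m-m_j+1)$ is too weak in the balanced regime: already for $k=2$, $m_1=m_2=N$ it gives $(N+1)^2$, whereas you need $M\ge \sqrt 2\,N^2$ at $d=2$; you notice this and switch to an entropy heuristic, but no uniform bound is actually established. Second, the assertion that ``the only delicate case is $d=2$'' is not justified: the worst $d$ depends on $M$ and $k$, and in fact in the paper's normalisation the extremal integer is $d=3$, not $d=2$. Without the variable-separation trick you would need a careful two-parameter optimisation, which your sketch does not supply.
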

\begin{proof}
We use the inequality $n/2 \geq \tau(n)-1$, valid for $n\geq 1$.
By \cref{lem:multinomQuotientIneq} it is then sufficient to show that
\begin{align*}
  \frac{1}{d^{(k-1)/2}} %
  \left(\frac{m^m}{\prod_{j=1}^k m_j^{m_j}}\right)^{d-1} %
  &> \left(\frac{d}{2}\right)^k\prod_{j=1}^k m_j, \\
\intertext{or, equivalently,}
  \left( \prod_{j=1}^k \frac{1}{m_j} \right)
  \left(\frac{m^m}{\prod_{j=1}^k m_j^{m_j}}\right)^{d-1} %
  &>
  \frac{1}{\sqrt{d}}
  \left(\frac{d}{2}\right)^k
  d^{k/2}. \\
\end{align*}
We will show the stronger inequality
  \begin{equation}\label{eq:binom-lemma-1}
    \left(\frac{m^m}{\prod_{j=1}^k m_j^{m_j+1}}\right)^{\frac{1}{k}}%
    > \left(\frac{d\sqrt{d}}{2}\right)^{\frac{1}{d-1}}.    \tag{$\ast$}
  \end{equation}
  It is not hard to see that the right hand side of this inequality,
  as a real function of $d>1$, attains its maximum between $3$ and $4$
  and is unimodal.  By direct inspection we see that for integral $d$
  the maximum of the right hand side is attained at $d=3$, where it
  is $\left(\frac{27}{4}\right)^{\frac14} \approx 1.61$.

  \def\m2{\check m}
  To find the minimum of the left hand side of \eqref{eq:binom-lemma-1},
  we consider the function
  \[
  h(z) = \frac{(z+\m2)^{z+\m2}}{z^{z+1}}, \text{ where $z\geq 1$.}
  \]
  For $\m2=1$ we have $h(z)=(1+1/z)^{z+1}$, which is
  strictly decreasing towards Euler's number $e$ as $z$ increases.  For $\m2\geq 2$
  we show that $h$ is strictly increasing.  Indeed, the derivative of
  $\ln h(z)$ equals
  \[
  \ln\left(1+\frac{\m2}{z}\right) - \frac{1}{z}.
  \]
  This expression is positive for $\m2\geq 2$ and $z\geq 1$, since we
  have
  \begin{align*}
    \exp\left(\frac{1}{z}\right) %
    &= 1 + \frac{1}{z}\left(1 + \sum_{k\geq 2}\frac{1}{k! z^{k-1}}\right)\\ %
    &\leq 1 + \frac{1}{z}\left(1 + \sum_{k\geq 2}\frac{1}{k!}\right) %
      = 1 + \frac{e - 1}{z} %
      < 1 + \frac{\m2}{z}.
  \end{align*}

  For $k=2$ and $m_2=1$, the left hand side
  of~\eqref{eq:binom-lemma-1} equals $\sqrt{h(m_1)}$ with $\m2=1$.
  It is thus strictly larger than $\sqrt{e}\approx 1.64$, which in turn
  is larger than $\left({27}/{4}\right)^{1/4}$, the maximum of the
  right hand side of~\eqref{eq:binom-lemma-1}.

  For $k=2$ and $m_2>1$ and for $k\geq 3$ the analysis of $h$ implies
  that the left hand side of~\eqref{eq:binom-lemma-1} is strictly
  increasing in each of the variables $m_i$, $1\leq i\leq k$, because
  it equals
  \[
    \left(\frac{h(m_i)}{\prod_{j\neq i} m_j^{m_j+1}}\right)^{1/k},
  \]
  with $\m2=\sum_{j\neq i} m_j$.  For $k=2$ and $m_2>1$, this
  expression is minimized at $m_1=1$, where it is larger than
  $\sqrt{e}$ as shown above.  For $k>2$ the minimum is attained at
  $m_1=\dotsb=m_k=1$, and is equal to $k$.
\end{proof}

\subsection{The bound for standard Young tableaux of straight shape}
\label{sec:straight-case}

The goal of this subsection is to prove the special case of \cref{thm:BST-bound-skew} where $\mu=\emptyset$ and $k=1$.
Note that we have the equivalence
\[
 |\BST(\lambda, 1)| \geq  \sum_{d\mid n, d > 1} |\BST(\lambda, d)|
 \quad\iff\quad
 \frac{ \sum_{d\mid n} |\BST(\lambda, d)| }{|\BST(\lambda, 1)| } \leq 2 \tag{$\ast$}\label{eq:ast}.
\]
For the remainder of this subsection we focus on proving the latter inequality
for $\lambda \notin \{ (n),(1^n)\}$.
We shall first make use the following theorem by S.~Fomin and N.~Lulov.
\begin{theorem}[\cite{FominLulov1997}]
For any partition $\lambda \vdash n$, we have
\begin{equation}\label{eq:qndDef}
  |\BST(\lambda, d)| \leq Q(n, d) \cdot  |\BST(\lambda, 1)|^{1/d}
  \quad
  \text{ where }
  \quad
  Q(n, d)\coloneqq \sqrt[d]{\frac{d^n}{\binom{n}{n/d,\dots,n/d}}}.
\end{equation}
\end{theorem}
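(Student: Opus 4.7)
The plan is to reduce the bound to a clean combinatorial inequality on hook lengths, by combining the $d$-quotient decomposition of $|\BST(\lambda,d)|$ with the hook-length formula for $f^\lambda$.

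First, I would dispose of the trivial cases. If $\lambda$ has non-empty $d$-core then $|\BST(\lambda,d)|=0$, so we may assume that the $d$-core of $\lambda$ is empty. Second, I would apply the straight-shape specialization of \cref{lem:stripHook} (the classical Littlewood formula) to get
\[
|\BST(\lambda,d)| = \binom{n/d}{n_1,\dotsc,n_d}\prod_{i=1}^d f^{\lambda^{(i)}},
\]
where $(\lambda^{(1)},\dotsc,\lambda^{(d)})$ is the $d$-quotient of $\lambda$ and $n_i = |\lambda^{(i)}|$.

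Next, I would apply the Frame--Robinson--Thrall hook-length formula to each $f^{\lambda^{(i)}}$ and to $f^\lambda$. Using the classical bijection coming from the $d$-abacus between cells $x\in\lambda$ with $d\mid h_\lambda(x)$ and cells $y$ in the $d$-quotient (under which $h_\lambda(x)=d\cdot h_{\lambda^{(i)}}(y)$), one obtains
\[
\prod_{x:\, d\mid h_\lambda(x)} h_\lambda(x) \;=\; d^{n/d}\prod_{i=1}^{d}\prod_{y\in\lambda^{(i)}} h_{\lambda^{(i)}}(y).
\]
Substituting this into the formula for $|\BST(\lambda,d)|$ and using that $Q(n,d)^d = d^n ((n/d)!)^d/n!$, the desired bound $|\BST(\lambda,d)|^d \le Q(n,d)^d f^\lambda$ simplifies, after cancellation, to the purely combinatorial inequality
\[
\prod_{x:\, d\nmid h_\lambda(x)} h_\lambda(x) \;\le\; \Biggl(\prod_{x:\, d\mid h_\lambda(x)} h_\lambda(x)\Biggr)^{d-1}. \tag{$\star$}
\]
Both sides carry exactly $n(d-1)/d$ factors.

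The main obstacle is proving $(\star)$, since individual ``bad'' hooks can be quite large. My plan is to read hooks off the $d$-abacus: every pair consisting of a bead and a gap directly above it on the \emph{same} runner contributes a hook divisible by $d$, while pairs on \emph{different} runners contribute hooks not divisible by $d$, and in each comparison the same-runner pair dominates the different-runner pair at the corresponding height. I would exploit this dominance to construct, for each cell $x$ with $d\nmid h_\lambda(x)$, an assignment to a cell $y$ with $d\mid h_\lambda(y)$ satisfying $h_\lambda(x)\le h_\lambda(y)$, so that each such $y$ is used at most $d-1$ times. Multiplying these inequalities yields $(\star)$. A cleaner alternative, if the matching becomes unwieldy, is to apply the AM--GM inequality to the $d$ hook-lengths lying on a common $d$-diagonal and then combine across diagonals; I expect the case analysis underlying either argument to be the technically delicate point, whereas the algebraic reduction in the earlier steps is essentially bookkeeping.
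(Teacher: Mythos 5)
The paper does not prove this theorem; it is cited as a black box from Fomin \& Lulov~\cite{FominLulov1997}, so there is no internal proof to compare against. That said, your reduction is exactly the one used in the cited source: using the Littlewood decomposition for $|\BST(\lambda,d)|$, the hook-length formula, and the $d$-core/$d$-quotient hook bijection, the bound is equivalent (after cancelling $((n/d)!)^d$, $d^n$ and $n!$ on both sides) to the purely combinatorial inequality
\[
\prod_{x:\, d\nmid h_\lambda(x)} h_\lambda(x) \;\le\; \Bigl(\prod_{x:\, d\mid h_\lambda(x)} h_\lambda(x)\Bigr)^{d-1}. \tag{$\star$}
\]
Up to this point everything you write is correct.

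The genuine gap is your proposed proof of $(\star)$. A cell-by-cell dominance matching --- assigning each $x$ with $d\nmid h_\lambda(x)$ to some $y$ with $d\mid h_\lambda(y)$, $h_\lambda(x)\le h_\lambda(y)$, each $y$ used at most $d-1$ times --- cannot exist in general, because the largest hook length in $\lambda$ need not be divisible by $d$. Already for $\lambda=(2,2)$ and $d=2$ the hooks are $\{3,2,2,1\}$; the cell with hook $3$ has no admissible target, since every hook divisible by $2$ equals $2$. (The inequality $(\star)$ itself holds: $3\cdot 1 = 3 \le 2\cdot 2 = 4$, but only multiplicatively, not via termwise dominance.) The same failure occurs for $(3,3)$ with $d=3$, and in fact whenever the maximal hook $h(1,1)$ is not a multiple of $d$. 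Your AM--GM fallback has a directional problem as well: AM--GM bounds a geometric mean from above by an arithmetic mean, which can help control $\prod_{d\nmid h} h$ but gives no lower bound on $\prod_{d\mid h} h$, and $(\star)$ needs both. So the delicate step is not just ``case analysis'': the matching as described is impossible, and a genuinely different argument for $(\star)$ --- such as the runner-by-runner analysis of bead--gap pairs on the $d$-abacus carried out by Fomin and Lulov, which compares products of cross-runner hooks against products of same-runner hooks globally rather than termwise --- is required.
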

We introduce the auxiliary function $\fbound_n(x)$ as
\begin{equation}\label{eq:fnx}
  \fbound_n(x) \coloneqq \sum_{d\mid n} Q(n, d) x^{\frac{1}{d}-1}.
\end{equation}
By plugging $x = |\BST(\lambda,1)|$ into \eqref{eq:fnx},
and using \eqref{eq:qndDef}, we have that
\begin{align*}
\fbound_n(|\BST(\lambda,1)|) &= \sum_{d\mid n} Q(n, d) |\BST(\lambda,1)|^{\frac{1}{d}-1} \\
      &= \frac{ \sum_{d\mid n} Q(n, d) |\BST(\lambda,1)|^{1/d} }{|\BST(\lambda,1)|} \\
 &\geq \frac{ \sum_{d\mid n} |\BST(\lambda,d)| }{|\BST(\lambda,1)|}.
\end{align*}
Hence, if we can show that $\fbound_n(x) \leq 2$ for suitable values
of $x$ and $n$ we obtain the second inequality in \eqref{eq:ast}.

\begin{lemma}\label{prop:mainStraightStatement}
The inequality
\[
\frac{ \sum_{d\mid n} |\BST(\lambda, d)| }{|\BST(\lambda, 1)| } \leq 2
\]
holds for all partitions $\lambda\vdash n$ other than $(n)$ and $(1^n)$ with $n$ composite.
\end{lemma}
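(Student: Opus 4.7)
The plan is to combine the Fomin--Lulov bound with monotonicity of $\fbound_n$ and a case analysis on $\lambda$. Fomin--Lulov together with the definition of $\fbound_n$ immediately gives
\[
\frac{\sum_{d\mid n}|\BST(\lambda,d)|}{f^\lambda} \leq \fbound_n(f^\lambda),
\]
so it suffices to show $\fbound_n(f^\lambda)\leq 2$. The $d=1$ term of $\fbound_n$ is the constant $1$, and every other term $Q(n,d)x^{1/d-1}$ is strictly decreasing in $x$, so $\fbound_n$ is itself decreasing as a function of $x$; the problem thus reduces to finding an adequate lower bound on $f^\lambda$ and an explicit upper bound for $\fbound_n$ at that value.

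For the analytic part I would bound the central multinomial coefficient $\binom{n}{n/d,\dots,n/d}$ from below using Robbins' refinement of Stirling's formula (\cref{thm:factorialBounds}), obtaining an explicit constant $C$ with $Q(n,d) \leq C\sqrt{n/d}$. Substituting this into $\fbound_n$, the sum $\sum_{d\mid n,\,d>1}Q(n,d)x^{(1-d)/d}$ is dominated by the $d=2$ term, which is of order $\sqrt{n/x}$, while the remaining terms give smaller corrections (a sum over at most $\tau(n)-2$ terms, with $\tau(n)$ subpolynomial in $n$). This shows that $\fbound_n(x)\leq 2$ whenever $x$ exceeds a threshold $x_0(n)$ of order $n$, so Fomin--Lulov immediately handles every $\lambda\vdash n$ for which $f^\lambda$ grows at least linearly with $n$.

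The main obstacle is that Fomin--Lulov is not sharp on the extremal shapes: the remark following \cref{thm:BST-bound-skew} records $(2^2),(3^2),(2^3)$ as partitions where the inequality is an equality, so no purely analytic estimate can cover every case. I would address the residual partitions --- hooks $(n-k,1^k)$, a finite list of rectangles $(a^b)$ with $ab=n$, and all $\lambda$ with $n$ below an explicit cutoff $N_0$ --- by direct computation through the Littlewood map. For a hook $(n-k,1^k)$, the abacus description shows that the $d$-quotient has a single nonempty component (in the generic range $k<d$ this is just the row $(n/d)$ or the column $(1^{n/d})$), so $|\BST((n-k,1^k),d)|$ has a simple closed form; summing against $f^\lambda=\binom{n-1}{k}$ then reduces the hook case to an elementary divisor-counting estimate such as $\tau(n)\leq n$. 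The finitely many rectangles and small-$n$ exceptions below the Fomin--Lulov threshold form an explicit checkable list, completing the proof.
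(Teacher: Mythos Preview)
Your overall architecture matches the paper's exactly: Fomin--Lulov gives $\sum_d |\BST(\lambda,d)|/f^\lambda \leq \fbound_n(f^\lambda)$, $\fbound_n$ is decreasing, so one needs a threshold $x_0(n)$ with $\fbound_n(x_0(n))\leq 2$ together with a lower bound $f^\lambda\geq x_0(n)$ outside an explicit exceptional list.

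The quantitative gap is in your choice of threshold. A \emph{linear} $x_0(n)\sim cn$ does not follow from the bounds you state. With $Q(n,d)\leq C\sqrt{n/d}$ one has, at $x=cn$,
\[
\sum_{\substack{d\mid n\\ d\geq 3}} Q(n,d)\,x^{1/d-1}
\;\leq\; (\tau(n)-2)\cdot C\sqrt{n/3}\,(cn)^{-2/3}
\;=\; O\bigl(\tau(n)\,n^{-1/6}\bigr),
\]
and to force this to be $<1$ via ``$\tau(n)$ subpolynomial'' you need $\tau(n)\leq n^{1/6}$, which only holds once $n$ is astronomically large (roughly $n\gtrsim e^{60}$). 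Your cutoff $N_0$ then becomes computationally meaningless.

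The paper sidesteps this by taking a \emph{quadratic} threshold $x_0(n)=n^2/3$. With only the crude estimates $Q(n,d)\leq\sqrt{n}$ and $\tau(n)\leq 2\sqrt{n}$, the $d\geq 3$ tail is bounded by $2n\,x^{-2/3}$, which at $x=n^2/3$ is $O(n^{-1/3})$; the resulting bound $g_n(n^2/3)=1+\sqrt{3}/\sqrt{n}+2\cdot 9^{1/3}/n^{1/3}$ is below $2$ already for $n\geq 121$, and the range $9\leq n\leq 120$ is checked directly. The exceptional list is then much shorter than yours: only the near-hooks $(n-1,1)$, $(2,1^{n-2})$ (where $f^\lambda=n-1$ and $|\BST(\lambda,d)|=1$ for $d>1$, so the inequality is just $n-1\geq\tau(n)-1$) and all $\lambda$ with $n\leq 8$. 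Rectangles never need separate treatment.

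One further step is missing from your sketch: having chosen $x_0(n)=n^2/3$, the paper proves $f^\lambda\geq n^2/3$ for every $\lambda\notin\{(n),(1^n),(n-1,1),(2,1^{n-2})\}$ with $n\geq 9$ by an inductive corner-removal argument (remove two boxes in two different ways and use $|\SYT(\lambda)|\geq|\SYT(\mu)|+|\SYT(\nu)|$). Your proposal asserts that Fomin--Lulov ``handles every $\lambda$ for which $f^\lambda$ grows at least linearly'' but does not indicate how to certify the needed lower bound on $f^\lambda$.
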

\begin{proof}
We consider several separate cases.
The cases when $n$ is a prime number are trivial.

\textbf{Case $\lambda = (n-1,1)$ or $\lambda=(2, 1^{n-1})$.}
In this case,
  \[
  |\BST(\lambda, d)| =
  \begin{cases}
    n-1 &\text{if $d=1$}\\
    1 &\text{otherwise},
  \end{cases}
  \]
  and we observe that $n-1 \geq \tau(n)-1$, where $\tau(n)$ is the
  number of divisors of~$n$.

\textbf{Case $|\lambda|\leq 8$.}
The remaining $14$ partitions (and their conjugates) not covered previously
can be verified by hand.

\textbf{Case $|\lambda|\geq 9$.}
As we noted before, it suffices to show that $\fbound_n\left(  |\BST(\lambda, 1)|  \right) \leq 2$.
It then suffices to prove the following three properties, whenever $n \geq 9$:
\begin{itemize}
 \item the function $x \mapsto \fbound_n(x)$ is strictly decreasing for fixed $n$
 \item $\fbound_n\left(  \frac{n^2}{3} \right) \leq 2 $ and
 \item $|\BST(\lambda, 1)| \geq \frac{n^2}{3}$ for $\lambda \notin \{ (n),(1^n), (n-1,1), (2,1^{n-2})\}$.
\end{itemize}
The first item is obvious from the definition of $\fbound_n(x)$,
as all exponents of $x$ are negative.
The second item is proved later in \cref{lem:ngeq9ineq}.
We proceed by verifying the third item.
Suppose that $|\lambda|\geq 9$ and that $\lambda$ is not of the excluded shapes.
We prove the statement by induction over $|\lambda|$:
The computer verifies the base cases $|\lambda| \in \{9,10\}$ easily.
We now consider some \emph{exceptional shapes}.
If $\lambda$ is a hook of the form $(n-b,1^b)$ with $2 \leq b \leq n-3$,
then $|\BST(\lambda, 1)| = \binom{n-1}{b}$.
Moreover, if $\lambda = (n-2,2)$ or $\lambda = (2^2,1^{n-4})$
then $|\BST(\lambda, 1)| = \frac{n(n-3)}{2}$.
In both cases the inequality is true for $n\ge 9$.
Now, \emph{if $\lambda$ is a rectangle and $n\ge 11$}, we can either remove
two boxes from the last row, or the last column, to obtain $\mu$ and $\nu$,
respectively. A simple bijective argument shows that
\[
 |\BST(\lambda, 1)| =|\BST(\mu, 1)| + |\BST(\nu, 1)|,
\]
and by induction, $|\BST(\lambda, 1)| \geq 2(n-2)^2/3 \geq n^2/3$
where the last inequality is true for $n\geq 7$.
Note that $\mu$ and $\nu$ are not of the excluded shapes.

For \emph{the case when $\lambda$ is not a rectangle or an exceptional shape},
a similar argument works by removing two different corners of the
shape $\lambda$ (possible since the shape is not a rectangle)
in order to obtain $\mu$, $\nu$.
These smaller shapes are now either covered by the base cases or are hook shapes.
They are not of the excluded shapes (this could only happen for the exceptional shapes) and we have
\[
 |\BST(\lambda, 1)| \geq |\BST(\mu, 1)| + |\BST(\nu, 1)|.
\]
A similar argument as above concludes the proof.
\end{proof}

\begin{lemma}\label{lem:Qleqsqrtnineq}
For positive integers $d\mid n$, we have
\begin{equation}\label{eq:Qleqsqrtn}
Q(n, d) \leq \sqrt{n},
\end{equation}
where $Q(n,d) =  \left( \frac{d^n}{ n!/ ((n/d)!)^d } \right)^{1/d}$ is as in \eqref{eq:qndDef}.
\end{lemma}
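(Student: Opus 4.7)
My plan is to rewrite the inequality $Q(n,d)\leq\sqrt{n}$ in the equivalent form
\[
  d^n (m!)^d \leq n^{d/2}\,n!, \qquad \text{where } m = n/d,
\]
obtained by raising to the $d^\thsup$ power and using $Q(n,d)^d = d^n(m!)^d/n!$. The case $d=1$ is immediate since $Q(n,1)=1$, so I split the remaining cases according to whether $m=1$ or $m\geq 2$.

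For $m=1$, that is $d=n$, the claim reduces to $n^{n/2}\leq n!$. I would prove this by the classical pairing argument: the quadratic $k\mapsto k(n+1-k)$ is concave and takes the value $n$ at both endpoints $k=1$ and $k=n$, so $k(n+1-k)\geq n$ for every $k\in\{1,\dots,n\}$, whence
\[
  2\log(n!)=\sum_{k=1}^n\log\bigl(k(n+1-k)\bigr)\geq n\log n.
\]

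For $m\geq 2$ and $d\geq 2$ I would invoke Robbins' bounds from \cref{thm:factorialBounds}, applying the upper estimate $m!\leq\sqrt{2\pi m}\,m^m e^{-m+1/(12m)}$ to $(m!)^d$ and the lower estimate $n!\geq\sqrt{2\pi n}\,n^n e^{-n+1/(12n+1)}$ to $n!$. Since $d^n m^{dm}=n^n$, the dominant factors cancel and, after dropping the (negative) term $-1/(12n+1)$ in the exponent, the task reduces to the clean inequality
\[
  (2\pi)^{(d-1)/2}\, e^{d/(12m)} \leq d^{(d+1)/2}\sqrt{m}.
\]
The left-hand side is decreasing in $m$ while the right-hand side is increasing in $m$, so it suffices to verify it at $m=2$; the resulting one-variable inequality in $d$ has right-hand side growing like $\tfrac{d}{2}\log d$ versus left-hand side growing like $\tfrac{d}{2}\log(2\pi)$, so it holds for all sufficiently large $d$ and only a handful of small values of $d$ need to be checked numerically.

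The main obstacle, and the reason for the case split above, is that the desired inequality is tight at $(n,d)=(2,2)$, where $Q(2,2)=\sqrt{2}=\sqrt{n}$. Robbins' approximation carries a strictly positive error and therefore cannot capture this equality case; this forces the elementary pairing argument for $m=1$ to be carried out as a separate step rather than being subsumed into the Robbins calculation.
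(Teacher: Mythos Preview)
Your argument is correct and uses the same essential ingredient as the paper (Robbins' factorial bounds), but you organise the case analysis differently.  The paper checks $n\in\{1,2\}$ by hand and then applies Robbins for all $n\geq 3$; after simplification the target becomes $\exp(d^2/6)\leq(2\pi n\,M_d)^n$ with $M_d=(d/2\pi)^d$, which they settle by splitting $d\geq 8$ versus $1\leq d\leq 7$.  You instead split on $m=n/d$: the case $m=1$ is handled by the exact pairing bound $(n!)^2=\prod_{k}k(n+1-k)\geq n^n$, which cleanly absorbs the equality case $(n,d)=(2,2)$, and for $m\geq 2$ Robbins gives the reduced inequality $(2\pi)^{(d-1)/2}e^{d/(12m)}\leq d^{(d+1)/2}\sqrt{m}$, monotone in $m$ so that only $m=2$ needs to be treated.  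Your route avoids having to single out the small-$n$ boundary cases separately and yields a slightly cleaner endgame; the trade-off is that you still owe a short verification that the inequality at $m=2$ holds for all $d\geq 2$ (it does --- taking logarithms, the difference between the two sides is positive at $d=2$ and has positive, increasing derivative for $d\geq 2$, so no genuine ``handful of values'' remains).
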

\begin{proof}
First we prove~\eqref{eq:Qleqsqrtn} for $n \in \{1, 2\}$ by direct inspection.
For $n\ge 3$ we use again Robbins' approximation~\eqref{eq:robbins} from \cref{thm:factorialBounds}.
We have that
\begin{align*}
 Q(n,d) &= \left( \frac{d^n}{ n!/ ((n/d)!)^d } \right)^{\frac{1}{d}} =
 \frac{d^{n/d}(n/d)!}{(n!)^{1/d}}
\\
&=
(2\pi n)^{-\frac{1}{2d}} \left(2\pi\frac{n}{d}\right)^{\frac{1}{2}}\exp\left(r_{n/d}-\frac{r_n}{d}\right)
 .
\end{align*}
By plugging this into~\eqref{eq:Qleqsqrtn} and simplifying the expressions it suffices to show that
\begin{equation}\label{eq:QleqsqrtnReduced}
\exp(2ndr_{n/d}-2n r_n) \le \left(2\pi n M_d \right)^n,
\end{equation}
where $M_d \coloneqq \left(\frac{d}{2\pi}\right)^d$. By now using the approximations $r_{n/d} < \frac{d}{12n}$ and $r_n>0$ we have
\[
2ndr_{n/d}-2n r_n \le 2ndr_{n/d} \le \frac{2nd^2}{12n} = \frac{d^2}{6}
\]
and thus the left hand side of~\eqref{eq:QleqsqrtnReduced} can be bound with $\exp(\frac{d^2}{6})$.

\textbf{Claim:}
For positive integers, $n\ge 3$, $n\ge d$, we have
\begin{equation}\label{eq:nge3ineq}
\exp\left(\frac{d^2}{6}\right) \le \left(2\pi n M_d \right)^n.
\end{equation}

\textbf{Case $d\ge 8$}. In this case we have $\exp(1/6) < \frac{d}{2\pi}$,
and therefore $\exp(\frac{d^2}{6})< M_d^d \le M_d^n \le \left(2\pi n M_d\right)^{d}$.
This proves~\eqref{eq:nge3ineq}.

\textbf{Case $1\le d \le 7$.}
Direct inspection shows that $2\pi M_d \geq 2\pi M_2 = 2/\pi$,
so for $n\ge 2$ we have $2\pi n M_d > 1$ and therefore  $(2\pi n M_d)^n$ is strictly increasing in $n$.

Finally, direct inspection now verifies that~\eqref{eq:nge3ineq} is already satisfied for $n=3$ and each $d \in \{1,\dots, 7\}$.
This proves the claim and concludes the proof.
\end{proof}

\begin{lemma}\label{lem:ngeq9ineq}
 For integers $n \geq 9$ we have $\fbound_n\left(\frac{n^2}{3}\right)\leq 2$.
\end{lemma}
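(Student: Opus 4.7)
My strategy is to control the sum $\sum_{d \mid n,\, d \geq 2} Q(n,d)(n^2/3)^{1/d - 1}$, since the $d=1$ contribution is $Q(n,1) = 1$ and the total must be at most $2$. A first attempt via \cref{lem:Qleqsqrtnineq} ($Q(n,d) \leq \sqrt{n}$), combined with $(n^2/3)^{1/d - 1} \leq (n^2/3)^{-1/2}$ for $d \geq 2$, yields a bound proportional to $(\tau(n) - 1)\sqrt{3/n}$. This is insufficient for highly composite $n$ in the intermediate range (for example $n = 12$ with $\tau(n)=6$ or $n=60$ with $\tau(n)=12$), so I would instead derive a sharper per-term bound by substituting Robbins' estimate \cref{thm:factorialBounds} directly into $Q(n,d)^d = d^n ((n/d)!)^d/n!$.

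The resulting explicit formula is
\[
Q(n,d) = (2\pi n/d)^{(d-1)/(2d)} d^{-1/(2d)} e^{r_{n/d} - r_n/d},
\]
which, together with $(n^2/3)^{1/d-1} = (3/n^2)^{(d-1)/d}$ and $r_{n/d} \leq 1/12$, gives
\[
Q(n,d)(n^2/3)^{1/d-1} \leq \bigl(18\pi/(dn^3)\bigr)^{(d-1)/(2d)} e^{1/12}.
\]
For $n \geq 9$ and $d \geq 2$ the base satisfies $18\pi/(dn^3) < 1/25$, so each summand decays rapidly in both $d$ and $n$; crucially, the factor $(2\pi n/d)^{(d-1)/(2d)}$ produces roughly $1/\sqrt{d}$ growth in the denominator for large $d$, which the bound $Q(n,d) \leq \sqrt{n}$ does not capture.

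I would then split into two regimes to conclude. For $n$ beyond some explicit threshold $N$, I would bound the $d=2$ summand by $(9\pi/2)^{1/4} e^{1/12}/n^{3/4}$ and, using the exponent $(d-1)/(2d) \geq 1/3$ for $d \geq 3$, bound each remaining summand by $(18\pi)^{1/3}/(dn^3)^{1/3}$; a standard estimate on $\tau(n)$ (for instance the uniform $\tau(n) \leq C n^{1/3}$ that holds for sufficiently large $n$) then forces the total divisor sum well below $1$. For $9 \leq n < N$, I would verify the inequality directly by computing $Q(n,d)$ and the corresponding summands for each divisor of $n$ — a finite computation.

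The main obstacle is calibrating $N$: choosing it small enough to make the residual check tractable while still large enough that the analytic bound applies uniformly, especially for highly composite $n$ in the transition range where $\tau(n)/\sqrt{n}$ is near its maximum.
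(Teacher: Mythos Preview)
Your strategy is correct and structurally matches the paper's: isolate the $d=1$ term, treat $d=2$ separately for sharpness, bound the $d\geq 3$ terms uniformly via a common exponent together with a divisor-count estimate, and handle small $n$ by direct computation.

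The paper's execution is cleaner on exactly the point you flag as the main obstacle. Rather than re-expanding $Q(n,d)$ via Robbins' formula, it simply invokes \cref{lem:Qleqsqrtnineq} ($Q(n,d)\leq\sqrt n$) uniformly, and instead of $\tau(n)\leq C n^{1/3}$ it uses the elementary $\tau(n)\leq 2\sqrt n$ valid for all $n\geq 1$. This produces the explicit majorant
\[
g_n(x)=1+\sqrt n\,x^{-1/2}+2n\,x^{-2/3},
\]
so that $g_n(n^2/3)=1+\sqrt{3/n}+2\cdot 9^{1/3}n^{-1/3}$ is visibly decreasing and first falls below $2$ at $n=121$; the finite check is then $9\leq n\leq 120$. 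Your sharper per-term Robbins bound would in principle shrink this window, but the gain is offset by messier constants and the need to quantify $C$ in $\tau(n)\leq C n^{1/3}$; the paper sidesteps the calibration issue entirely by accepting a slightly larger but completely explicit threshold.
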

\begin{proof}
We first verify the inequality for all $n$ in the range $9 \leq n \leq 120$.
This can be done using a computer.
For $n \geq 121$, we shall bound $\fbound_n(x)$ from above by $g_n(x)$,
defined as
  \[
  g_n(x) = 1 + \sqrt{n} x^{-\frac{1}{2}} + 2 n x^{-\frac{2}{3}}.
  \]

A simple computation shows that for $n=121$, we have that $g_n\left(\frac{n^2}{3}\right)\approx 1.999$.
Hence, if we can show that
\begin{itemize}
 \item  $\fbound_n(x) \leq g_n(x)$ for all $x \geq 1$ and
 \item $n \mapsto g_n\left(\frac{n^2}{3}\right)$ is strictly decreasing,
\end{itemize}
then we are done.
We proceed with the first point.
Recall that the number of divisors of $n$ is at most $2\sqrt{n}$
for $n\geq 1$.  We thus obtain
  \begin{align*}
    \fbound_n(x) &= \sum_{d\mid n} Q(n, d) x^{\frac{1}{d}-1}\\
           &= 1 + \sum_{d\mid n, d>1} Q(n, d) x^{\frac{1}{d}-1}\\
         \text{\{by \cref{lem:Qleqsqrtnineq}\}}  &\leq 1 + \sum_{d\mid n, d>1} \sqrt{n} x^{\frac{1}{d}-1}\\
           &\leq 1 + \sqrt{n} x^{-\frac{1}{2}} %
             + \sqrt{n} \sum_{d\mid n, d>1} x^{\frac{1}{3}-1}\\
           &\leq 1 + \sqrt{n} x^{-\frac{1}{2}} %
             + 2 n x^{-\frac{2}{3}}\\
              &= g_n(x).
  \end{align*}
For the second point, it is enough to note that
\[
 g_n(n^2/3) = 1+\frac{\sqrt{3}}{\sqrt{n}} + \frac{2 \times 9^{1/3}}{\sqrt[3]{n}}
\]
which is obviously decreasing in $n$.
\end{proof}

\subsection{The bound for skew standard Young tableaux}
\label{sec:skew-case-1}

We now extend the result of the previous section to skew shapes.
\begin{lemma}\label{thm:BST-bound-1}
  Let $\lambda/\mu$ be a skew shape with $n$ boxes.  Then
  \[
  |\BST(\lambda/\mu, 1)| \geq %
  \sum_{d\mid n, d > 1} |\BST(\lambda/\mu, d)|.
  \]
  if and only if $\lambda/\mu$ is neither the partition $(n)$ nor the
  partition $(1^n)$.
\end{lemma}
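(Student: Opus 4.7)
The plan is to decompose $\chi^{\lambda/\mu}$ via the Littlewood--Richardson rule to reduce to the straight-shape bound \cref{prop:mainStraightStatement}, and to handle the two straight shapes $(n)$ and $(1^n)$ that this reduction cannot absorb --- which appear as LR components of horizontal and vertical strips --- by a direct multinomial estimate from \cref{cor:binomial-approximation}.

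First, by \cref{cor:characterAsBST} applied to $\lambda/\mu$ and to each $\nu\vdash n$, together with the character identity~\eqref{eq:characterLRRule} evaluated at the class $(s^{n/s})$ and the triangle inequality (using $c^\lambda_{\mu,\nu}\ge 0$), one has for every $s\mid n$
\begin{equation*}
|\BST(\lambda/\mu, s)| \;\le\; \sum_{\nu\vdash n} c^\lambda_{\mu,\nu}\, |\BST(\nu, s)|,
\end{equation*}
with equality at $s=1$ (evaluating the characters at the identity gives $|\SYT(\cdot)|$). Summing over $s>1$ dividing $n$ and applying \cref{prop:mainStraightStatement} termwise to each $\nu\notin\{(n),(1^n)\}$ on the right then yields the claim whenever $c^\lambda_{\mu,(n)}=c^\lambda_{\mu,(1^n)}=0$, which by \cref{lemma:vanishing LR-coeffs} is precisely the case when $\lambda/\mu$ has both some column with at least two cells and some row with at least two cells.

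The remaining cases are the horizontal and vertical strips, which are exchanged by conjugation (which preserves $|\BST(\cdot,s)|$), so I may restrict to a horizontal strip $\lambda/\mu$ not isomorphic to a single row. Since adjacent rows in a horizontal strip occupy disjoint and non-adjacent column ranges, the connected components are exactly the $k\ge 2$ nonempty rows of lengths $c_1,\dots,c_k$; a direct count gives $|\BST(\lambda/\mu,1)|=\binom{n}{c_1,\dots,c_k}$, while $|\BST(\lambda/\mu, s)|$ equals $\binom{n/s}{c_1/s,\dots,c_k/s}$ if $s\mid c_j$ for every $j$ and $0$ otherwise. Setting $g=\gcd(c_1,\dots,c_k)$, if $g=1$ the sum over $s>1$ is empty; otherwise every $c_j\ge 2$ and \cref{cor:binomial-approximation}, applied with $d=s$, $m=n/s$, $m_j=c_j/s$, yields
\[
\binom{n/s}{c_1/s,\dots,c_k/s} \;\le\; \binom{n}{c_1,\dots,c_k}\bigg/\prod_{j=1}^{k}\bigl(\tau(c_j)-1\bigr).
\]
Summing over the $\tau(g)-1$ values of $s>1$ with $s\mid g$ closes the argument, since $g\mid c_j$ implies $\tau(g)-1\le \tau(c_j)-1$, and $k\ge 2$ together with every factor being a positive integer implies $\tau(c_j)-1\le \prod_j(\tau(c_j)-1)$.

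For the converse direction, if $\lambda/\mu\in\{(n),(1^n)\}$ with $n$ composite, each divisor of $n$ produces exactly one border-strip tableau, so the right-hand side equals $\tau(n)-1>1=|\BST(\cdot,1)|$. The main obstacle is the strip step: the LR expansion of any horizontal or vertical strip necessarily involves the excluded component $(n)$ or $(1^n)$, for which $\sum_{s>1}|\BST(\nu,s)|=\tau(n)-1$ can easily exceed $|\BST(\nu,1)|=1$, so \cref{prop:mainStraightStatement} cannot be invoked on those summands and one must instead pay with the quantitative multinomial bound \cref{cor:binomial-approximation}, which is exactly strong enough to beat the number $\tau(g)-1$ of nontrivial divisors of $g$.
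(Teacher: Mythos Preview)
Your proof is correct and follows essentially the same two-case strategy as the paper: the Littlewood--Richardson decomposition combined with \cref{prop:mainStraightStatement} and \cref{lemma:vanishing LR-coeffs} when $\lambda/\mu$ has both a row and a column with at least two cells, and the explicit multinomial count together with \cref{cor:binomial-approximation} for horizontal or vertical strips. One harmless slip: adjacent rows of a horizontal strip may occupy \emph{touching} column ranges (when $\lambda_{i+1}=\mu_i$), but the corresponding cells are only diagonally adjacent, so the rows are still separate connected components and your argument goes through.
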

\begin{proof}
We distinguish two cases:

\textbf{Case 1:} $\lambda/\mu$ has at least two boxes in some row and in some column.

We prove the following sequence of inequalities:
\begin{align}
|\BST(\lambda/\mu, 1)| %
&= \sum_{\nu\vdash n} c^\lambda_{\mu,\nu} |\BST(\nu, 1)|\nonumber \\
&\geq
 \sum_{\nu\vdash n} %
c^\lambda_{\mu,\nu} \sum_{d\mid n, d > 1} |\BST(\nu, d)| \label[ineq]{eq:inequality1}\\
&\geq
\sum_{d\mid n, d > 1} |\BST(\lambda/\mu, d)|. \label[ineq]{eq:inequality2}
\end{align}

We first prove \cref{eq:inequality1}, for each summand separately.
That is, we show that for all partitions $\nu \vdash n$,
\[
c^\lambda_{\mu,\nu} |\BST(\nu, 1)| \geq
c^\lambda_{\mu,\nu} \sum_{d\mid n, d > 1} |\BST(\nu, d)|.
\]
Since $\lambda/\mu$ has at least two boxes in the same row and two boxes in the same column,
we can apply \cref{lemma:vanishing LR-coeffs}. It follows that $c^\lambda_{\mu,(1^n)}=c^\lambda_{\mu,(n)}=0$.
Thus, the inequality holds for $\nu = (1^n)$ and $(n)$.
For all other partitions $\nu \vdash n$, the inequality follows from \cref{prop:mainStraightStatement}.

We now change the order of summation and prove \cref{eq:inequality2}, again separately for each summand.
That is, for fixed $d>1$ with $dm = n$, we show
\[
\sum_{\nu\vdash n} c^\lambda_{\mu,\nu} |\BST(\nu, d)| \geq |\BST(\lambda/\mu, d)|.
\]
Indeed, we have
\begin{align*}
  \sum_{\nu\vdash n} c^\lambda_{\mu,\nu} \left|\BST(\nu, d) \right| %
  &= \sum_{\nu\vdash n} c^\lambda_{\mu,\nu} \left| \chi^{\nu}((d^m)) \right| \\%
  &\geq \left|\chi^{\lambda/\mu}((d^m)) \right| \\
  &= \left|\BST(\lambda/\mu, d) \right|.
\end{align*}
The two equalities follow from \cref{cor:characterAsBST}, whereas the
inequality is obtained by taking absolute values on both sides of the
Littlewood--Richardson rule for characters~\eqref{eq:characterLRRule},
evaluated at $(d^m)$ and applying the triangle inequality.

\textbf{Case 2:} all columns or all rows of $\lambda/\mu$ contain at most one box.

By symmetry we may assume
that every connected component of $\lambda/\mu$ is a single row.
Let the lengths of these rows be $n_1,n_2,\dotsc,n_r$.
We have $\BST(\lambda/\mu,d)=\emptyset$ unless all $n_i$ are multiples of $d$.
In this case we find by explicit enumeration that
\[
  |\BST(\lambda/\mu, d)| = \binom{n/d}{n_1/d,n_2/d,\dotsc,n_r/d}.
\]
It then suffices to prove that
\[
 \binom{n}{n_1,n_2,\dotsc,n_r} \geq \sum_{d>1, d\mid \gcd(n_1, n_2, \dotsc,n_r)} \binom{n/d}{n_1/d,n_2/d,\dotsc,n_r/d}.
\]
This inequality is an easy consequence of \cref{cor:binomial-approximation}.
\end{proof}

\subsection{The general case}
\label{sec:skew-case}

We are now ready to prove \cref{thm:BST-bound-skew} itself.

\begin{proof}[Proof of \cref{thm:BST-bound-skew}]
  Let $(\nu^1/\kappa^1,\dots,\nu^k/\kappa^k)$ be the skew
  $k$-quotient of $\lambda/\mu$.  We first establish the inequality
  \[
    (\tau(|\nu^i/\kappa^i|)-1)|\BST(\nu^i/\kappa^i, 1)| %
    \geq \; %
    \sum_{\substack{ d \; \mid \; |\nu^i/\kappa^i| \\ d > 1 }} %
    \; %
    |\BST(\nu^i/\kappa^i, d)|.
  \]
  If $\nu^i/\kappa^i$ is neither the single row nor the single column
  partition, the bound for skew standard Young tableaux,
  \cref{thm:BST-bound-1}, applies.  Moreover, in this case
  $|\nu^i/\kappa^i| \geq 3$ and therefore
  $\tau(|\nu^i/\kappa^i|)-1\geq 1$.  Otherwise,
  $|\BST(\nu^i/\kappa^i, d)| = 1$ for all $d\mid |\nu^i/\kappa^i|$, and
  the inequality holds trivially.

  Thus, setting $g = \gcd(|\nu^1/\kappa^1|,\dots,|\nu^k/\kappa^k|)$,
  \begin{align*}
    \prod_{\substack{i=1\\ \nu^i/\kappa^i\neq\emptyset}}^k %
    \left(\tau(|\nu^i/\kappa^i|)-1\right) %
    |\BST(\nu^i/\kappa^i, 1)|
 &\geq
   \prod_{\substack{i=1\\ \nu^i/\kappa^i \neq \emptyset}}^k %
     \sum_{\substack{ d \; \mid \; |\nu^i/\kappa^i| \\ d > 1 }} %
    |\BST(\nu^i/\kappa^i, d)| \\%
 &\geq
   \sum_{\substack{d\mid g \\ d>1}} %
    \prod_{\substack{i=1\\ \nu^i/\kappa^i \neq \emptyset}}^k |\BST(\nu^i/\kappa^i, d)| \\ %
 &=
   \sum_{\substack{d\mid g \\ d>1}}  %
    \prod_{i=1}^k |\BST(\nu^i/\kappa^i, d)| \\ %
\{\text{By \cref{lem:stripHook} }\} &=
   \sum_{\substack{d\mid g \\ d>1}}  %
   \frac{
    |\BST(\lambda/\mu, dk)| }{
		\binom{\sum_i|\nu^i/\kappa^i|/d}%
		{|\nu^1/\kappa^1|/d,\dotsc,|\nu^k/\kappa^k|/d}
    }.
  \end{align*}
  Note that, for any $d\geq 1$, there is exactly one border-strip
  tableaux having empty shape: $|\BST(\emptyset, k)| = 1$.  Suppose
  that $g = \gcd(|\nu^1/\kappa^1|,\dots,|\nu^k/\kappa^k|) > 1$ and
  there are at least two non-empty skew shapes among
  $\nu^1/\kappa^1,\dots,\nu^k/\kappa^k$.  Then we can apply the above
  inequality and \cref{cor:binomial-approximation} and
  obtain
  \begin{align*}
    |\BST(\lambda/\mu, k)| %
    &=
      \binom{\sum_i|\nu^i/\kappa^i|}{|\nu^1/\kappa^1|,\dotsc,|\nu^k/\kappa^k|} %
      \prod_{i=1}^k |\BST(\nu^i/\kappa^i, 1)| \\
    &\geq
      \binom{\sum_i|\nu^i/\kappa^i|}%
      {|\nu^1/\kappa^1|,\dotsc,|\nu^k/\kappa^k|}
      \prod_{\substack{i=1\\ \nu^i/\kappa^i \neq\emptyset}}^k \left(\tau(|\nu^i/\kappa^i|)-1\right)^{-1} \\%
    &\qquad \sum_{{d\mid g, d>1}} %
    |\BST(\lambda/\mu, dk)|
    \binom{\sum_i|\nu^i/\kappa^i|/d}%
    {|\nu^1/\kappa^1|/d,\dotsc,|\nu^k/\kappa^k|/d}^{-1} \\ %
    &\geq
      \sum_{{d\mid g, d>1}} |\BST(\lambda/\mu, dk)| \\%
    &=
      \sum_{d>1} |\BST(\lambda/\mu, dk)|.%
  \end{align*}
  If $g = \gcd(|\nu^1/\kappa^1|,\dots,|\nu^k/\kappa^k|) = 1$, the inequality is
  trivially true.

  If there is precisely one non-empty skew shape $\nu/\kappa$
  among $\nu^1/\kappa^1,\dots,\nu^k/\kappa^k$, we have
  $|\BST(\lambda/\mu, d k)| = |\BST(\nu/\kappa, d)|$ for all $d$ by
  \cref{lem:stripHook}.

  If $\nu/\kappa$ is neither $(n/k)$ nor $(1^{n/k})$, 
  \cref{thm:BST-bound-1} applies and we have
  \[
    |\BST(\lambda/\mu, k)| = |\BST(\nu/\kappa, 1)| %
    \geq \sum_{d\mid \frac{n}{k}, d>1} |\BST(\nu/\kappa, d)| %
    = \sum_{d\mid \frac{n}{k}, d>1} |\BST(\lambda/\mu, dk)|.
  \]
  Otherwise, if $\nu/\kappa$ is either $(n/k)$ or $(1^{n/k})$,
  \cref{lem:shapeAbacusPrimes} implies that there is only one element
  in $\BST(\lambda/\mu, k)$.
\end{proof}

\section{Cyclic sieving for skew standard tableaux}\label{sec:mainCSP}

In this section we apply the bounds established in the previous
section and \cref{thm:alexanderssonAmini} to prove the existence
of several cyclic sieving phenomena for various families of skew
standard Young tableaux.

Let us first put the bound from \cref{thm:BST-bound-skew} into the
form required to apply \cref{thm:alexanderssonAmini}.
\begin{proposition}\label{cor:mainCSPInequality}
  Let $\lambda/\mu$ be a skew shape with $n$ boxes and let $m\in\setN$.
  Then, for all $k\in\setN$ with $k \mid n$,
  \[
    \sum_{d\mid\frac{n}{k}}\mu(d) |\BST(\lambda/\mu, d k)|^m \geq 0,
  \]
  or, equivalently,
  \[
  \sum_{d\mid k}\mu(k/d) |f^{\lambda/\mu}(\xi^d)|^m \geq 0.
  \]
\end{proposition}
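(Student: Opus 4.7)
My plan is to fix $k\mid n$, write $B_d \coloneqq |\BST(\lambda/\mu,dk)|$ for $d\mid n/k$, and prove the first form $\sum_{d\mid n/k}\mu(d)B_d^m\ge 0$ directly. The second form then follows by the change of variables $d\mapsto k/d$ using \cref{cor:characterAsBST}, which identifies $|f^{\lambda/\mu}(\xi^d)|$ with $|\BST(\lambda/\mu,n/d)|$ and turns the sum for ``$k$'' in the second form into the sum for ``$n/k$'' in the first.

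The core of the argument is a case split on the size of $B_1 = |\BST(\lambda/\mu,k)|$. If $B_1 = 0$, then $B_d=0$ for every $d\mid n/k$: any border-strip tableau with strips of size $dk$ can be refined (one strip at a time, each strip being a border-strip and therefore cuttable into consecutive sub-strips of size $k$) into a border-strip tableau with strips of size $k$. So the sum is $0$. If $B_1 = 1$, then \cref{lem:shapeAbacusPrimes} forces $\lambda/\mu$ to be a single border-strip and $B_d=1$ for every $d\mid n/k$; the sum collapses to $\sum_{d\mid n/k}\mu(d)=[n/k=1]\in\{0,1\}$.

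The remaining case, $B_1\ge 2$, is where \cref{thm:BST-bound-skew} enters:
\[
B_1 \;\ge\; \sum_{\substack{d\mid n/k\\ d>1}} B_d.
\]
Since the $B_d$ are non-negative integers and $m\ge 1$, expanding the $m$-th power gives
\[
B_1^m \;\ge\; \Bigl(\sum_{d>1} B_d\Bigr)^{\!m} \;\ge\; \sum_{d>1} B_d^m,
\]
where the last step just uses $(x_1+\dots+x_r)^m\ge x_1^m+\dots+x_r^m$ for non-negative reals. Using $\mu(1)=1$ and $\mu(d)\ge -1$ for $d>1$, we conclude
\[
\sum_{d\mid n/k}\mu(d)B_d^m \;\ge\; B_1^m - \sum_{\substack{d\mid n/k\\ d>1}} B_d^m \;\ge\; 0.
\]

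There is no real obstacle here beyond packaging these three cases cleanly; all the heavy lifting has been done in \cref{thm:BST-bound-skew}. The one slightly delicate point is making sure the refinement argument in the $B_1=0$ case is uncontroversial (that a $dk$-border-strip can be split into $d$ consecutive $k$-border-strips along the strip), and that \cref{lem:shapeAbacusPrimes} is applied in the right direction in the $B_1=1$ case so that $B_d=1$ holds uniformly for all $d\mid n/k$, rather than just for divisors of a fixed multiple.
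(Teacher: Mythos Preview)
Your argument is correct and matches the paper's proof: the same case split on $B_1=|\BST(\lambda/\mu,k)|$, the same appeal to \cref{lem:shapeAbacusPrimes} when $B_1=1$, and for $B_1\ge 2$ the same combination of \cref{thm:BST-bound-skew} with $\mu(1)=1$, $\mu(d)\ge-1$, and the elementary bound $(\sum x_i)^m\ge\sum x_i^m$. The paper actually leaves the $B_1=0$ case implicit, so your explicit treatment is an improvement; just be aware that cutting a $dk$-ribbon into \emph{consecutive} $k$-pieces along the ribbon does not come with an evident labeling making it a border-strip tableau (try the hook $(2,1)$ with $k=1$), so the cleanest justification that $B_d>0\Rightarrow B_1>0$ is that equal $dk$-cores force equal $k$-cores.
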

\begin{proof}
  The equivalence of the two inequalities follows from
  \cref{cor:characterAsBST} and replacing $d$ with $\frac{n}{dk}$ and
  $k$ with $\frac{n}{k}$.  We prove the former inequality.

  If $|\BST(\lambda/\mu, k)|=1$, we also have
  $|\BST(\lambda/\mu, dk)|=1$ for any $d \mid \frac{n}{k} $ by
  \cref{lem:shapeAbacusPrimes}. Therefore,
  \[
    \sum_{d\mid \frac{n}{k}}\mu(d) |\BST(\lambda/\mu, d k)|^m =
    \sum_{d\mid \frac{n}{k}}\mu(d) =
    \begin{cases*}
    	1&if $n=k$\\
    	0&if $n\neq k$
    \end{cases*} \ge 0.
  \]
  This reasoning also covers the case $m=0$.

  Otherwise, since $\mu(1)=1$ and $\mu(d)\geq -1$, we have
  \begin{align*}
    \sum_{d\mid \frac{n}{k}} \mu(d) |\BST(\lambda/\mu, d k)|^m
    \geq |\BST(\lambda/\mu, k)|^m - \sum_{d\mid\frac{n}{k}, d > 1} |\BST(\lambda/\mu, d k)|^m \geq 0,
  \end{align*}
  where the final inequality follows from \cref{thm:BST-bound-skew}.
\end{proof}

\begin{remark}
  One might think that $|f^{\lambda/\mu}(\xi^d)|$ could be the
  number of fixed points of a group action, despite the fact that
  $|f^{\lambda/\mu}(q))|$ is not a polynomial.  However, this is not
  the case.

  For example, consider $\lambda=(2,1)$.  Then $f^\lambda(q) = q+q^2$
  and, for a $3^\rdsup$ root of unity $\xi$ we have
  $|f^\lambda(\xi^3)|=|\BST(\lambda,1)|=2$ and
  $|f^\lambda(\xi)|=|\BST(\lambda,3)|=1$, which is incompatible with the
  possible orbit sizes of a group action on a set with two elements.
  Indeed, for $k=3$ we obtain
  \[
    \frac{1}{k}\sum_{d\mid k} \mu(k/d) |f^\lambda(\xi^d)| = \frac{1}{3}(-1 + 2),
  \]
  which, by \cref{rmk:orbit-sizes}, would have to be an integer.
\end{remark}

Taking into account the previous remark, it makes sense to look for
shapes $\lambda/\mu$ such that the character $f^{\lambda/\mu}$
evaluated at roots of unity is nonnegative.
\begin{proposition}\label{prop:positive-cyclic-sieving}
  Let $\lambda/\mu$ be a skew shape with $n$ boxes and let
  $m\in\setN$.  Then there is a group action $\rho$ such that
  \[
  \left(
    \underbrace{\SYT(\lambda/\mu)\times\dots\times\SYT(\lambda/\mu)}_m,
    \langle \rho \rangle, f^{\lambda/\mu}(q)^m \right)
  \]
  exhibits the cyclic sieving phenomenon if and only if $m$ is even,
  or $m$ is odd and for each $k\in\setN$ with $k\mid n$ there is a
  tiling of $\lambda/\mu$ of even height with strips of size $k$.
\end{proposition}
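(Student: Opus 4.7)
The plan is to apply Theorem~\ref{thm:alexanderssonAmini} to the polynomial $f(q) = f^{\lambda/\mu}(q)^m$, regarded as a candidate cyclic sieving polynomial of order $n$ on a set of size $f^{\lambda/\mu}(1)^m$. Two conditions have to be verified: that $f^{\lambda/\mu}(\xi^d)^m \in \setN$ for every $d \in \{1,\dots,n\}$, and that $\sum_{d \mid k}\mu(k/d) f^{\lambda/\mu}(\xi^d)^m \ge 0$ for every $k \mid n$. The heavy lifting behind the latter is already contained in Proposition~\ref{cor:mainCSPInequality}.

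First I would use Corollary~\ref{cor:characterAsBST} to rewrite $f^{\lambda/\mu}(\xi^d) = \varepsilon_d\,|\BST(\lambda/\mu, n/d)|$, where $\varepsilon_d \in \{-1,+1\}$ is the common sign $(-1)^{\height(B)}$ of any border-strip tableau in $\BST(\lambda/\mu, n/d)$. In particular $|f^{\lambda/\mu}(\xi^d)|^m = |\BST(\lambda/\mu, n/d)|^m$ in all cases. If $m$ is even then $f^{\lambda/\mu}(\xi^d)^m = |\BST(\lambda/\mu, n/d)|^m$ is automatically a nonnegative integer. If $m$ is odd then $f^{\lambda/\mu}(\xi^d)^m = \varepsilon_d\,|\BST(\lambda/\mu, n/d)|^m$ is a nonnegative integer exactly when $\BST(\lambda/\mu, n/d)$ is empty or $\varepsilon_d = +1$; setting $k = n/d$, this is precisely the tiling hypothesis in the statement. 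This verifies condition (i).

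Granted (i), each evaluation $f^{\lambda/\mu}(\xi^d)^m$ coincides with $|f^{\lambda/\mu}(\xi^d)|^m$, so condition (ii) becomes $\sum_{d \mid k}\mu(k/d)|f^{\lambda/\mu}(\xi^d)|^m \ge 0$, which is exactly Proposition~\ref{cor:mainCSPInequality}. Theorem~\ref{thm:alexanderssonAmini} then delivers the desired cyclic group action and establishes the \emph{if} direction. For the \emph{only if} direction, suppose $m$ is odd and the hypothesis fails at some $k \mid n$, so that $\BST(\lambda/\mu, k)\neq\emptyset$ and $\varepsilon_{n/k} = -1$. Then $f^{\lambda/\mu}(\xi^{n/k})^m = -|\BST(\lambda/\mu, k)|^m < 0$, which cannot equal the number of fixed points of any group action on a finite set, ruling out cyclic sieving.

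No substantial obstacle remains at this stage: the genuinely difficult input is the absolute-value bound of Proposition~\ref{cor:mainCSPInequality}, which rests on Theorem~\ref{thm:BST-bound-skew}. A minor point worth flagging is that when $\BST(\lambda/\mu, k)$ is empty the tiling hypothesis is vacuously harmless, since then $f^{\lambda/\mu}(\xi^{n/k}) = 0$ is itself a nonnegative integer and contributes zero to every Möbius sum.
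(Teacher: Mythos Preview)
Your proposal is correct and follows essentially the same route as the paper: invoke \cref{cor:characterAsBST} to identify the sign of $f^{\lambda/\mu}(\xi^d)$ with the parity of the height of any border-strip tiling, feed the resulting nonnegativity into \cref{thm:alexanderssonAmini}, and handle the Möbius-sum condition via \cref{cor:mainCSPInequality}. Your treatment of the edge case $\BST(\lambda/\mu,k)=\emptyset$ is in fact slightly more careful than the paper's own phrasing.
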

\begin{remark}
  The case $m=2$ of this proposition does not extend to squares of arbitrary
  representations of the symmetric group.  For example, consider the
  representation with character $\chi^{(4)} + \chi^{(2,1^2)}$.  Its
  fake degree polynomial is $f(q) = 1+q^3+q^4+q^5$.  Then we obtain,
  for a primitive fourth root of unity $\xi$, that $f(\xi)^2 = 4$ and
  $f(\xi^2)^2 = 0$.  This violates the condition in
  \cref{thm:alexanderssonAmini} for $k=2$, because
  $\mu(2)f(\xi)^2 + \mu(1)f(\xi^2)^2 = -4$.
\end{remark}
\begin{proof}
  Let $\xi$ be an $n^\thsup$ primitive root of unity.  Then
  \cref{cor:mainCSPInequality} together with
  \cref{thm:alexanderssonAmini} ensures the existence of $\rho$,
  provided $f^{\lambda/\mu}(\xi^d)^m$ is nonnegative for all
  $d\mid n$.  Conversely, nonnegativity is a necessary condition
  because, given a group action $\rho$, the number of fixed points of
  $\rho^d$ equals $f^{\lambda/\mu}(\xi^d)^m$.

  It remains to consider the case of odd $m$.  By
  \cref{cor:characterAsBST} $f^{\lambda/\mu}(\xi^d)$, with
  $d=\frac{n}{k}$, is nonnegative if and only if there is a tiling of
  $\lambda/\mu$ of even height with strips of size~$k$.
\end{proof}

\begin{corollary}
  Let $\lambda = (a, 1^{n-a})$ be a hook-shaped partition of $n$.
  Then there is a group action $\rho$ such that
  $\left( \SYT(\lambda), \langle \rho \rangle, f^{\lambda}(q)
  \right)$ exhibits the cyclic sieving phenomenon if and only if $n$ and $a$ are odd and
  $a-1 \mod m$ is even for $m\mid n$, $1\leq m < a$.
\end{corollary}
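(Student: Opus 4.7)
The plan is to apply \cref{prop:positive-cyclic-sieving} with $m=1$, so that the existence of the desired group action becomes equivalent to the existence, for every $k\mid n$, of a border-strip tiling of $\lambda=(a,1^{n-a})$ by strips of size $k$ having even total height. Since $\lambda$ is a hook, \cref{cor:characterAsBST} ensures that every tiling in $\BST(\lambda,k)$ has the same sign, so it suffices to compute the height of a single concrete tiling for each $k\mid n$.

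First, I would analyse the $k$-abacus of $(a,1^{n-a})$, whose beads occupy the positions $\{1,2,\dots,n-a\}\cup\{n\}$. Compared with the abacus of the empty partition, a single bead has moved from position $0$ to position $n$ along runner $n\bmod k$, so $\BST(\lambda,k)$ is non-empty precisely when $k\mid n$, and the $k$-quotient consists of the single hook $(q-b,1^b)$ with $q=n/k$ and $b=\lfloor(n-a)/k\rfloor$, the other entries being empty.  In that case one may exhibit a canonical tiling as follows: stack $b$ vertical $k$-strips at the bottom of column~$1$ (each of height $k-1$); insert an L-shaped corner strip through $(1,1)$ consisting of the remaining $r\coloneqq(n-a)\bmod k$ cells below row~$1$ together with the first $k-r$ cells of row~$1$ (height $r$, omitted if $r=0$); and cover the rest of row~$1$ with horizontal $k$-strips of height $0$. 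Summing these contributions yields
\[
h(k)=b(k-1)+r=(n-a)-\lfloor(n-a)/k\rfloor,
\]
which is the common value modulo~$2$ of the height of every tiling in $\BST(\lambda,k)$.

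The remaining step is to translate ``$h(k)$ is even for every $k\mid n$'' into the stated modular form. Setting $k=n$ gives $h(n)=n-a$, so $n$ and $a$ must have the same parity; a short case analysis using the divisors of $n$ of the form $2^j$ shows that, among non-trivial hooks, this compatibility forces both $n$ and $a$ to be odd. In the odd case every $k\mid n$ is odd, hence $h(k)\equiv(n-a)\bmod k\pmod 2$, and using $k\mid n$ a direct congruence computation shows $(n-a)\bmod k$ and $(a-1)\bmod k$ always agree modulo~$2$: writing $a=qk+s$ one finds that either their sum equals $k-1$ (even, since $k$ is odd) when $k\nmid a$, or both vanish when $k\mid a$. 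Finally, for divisors $k\ge a$ the residue $(a-1)\bmod k=a-1$ is automatically even since $a$ is odd, so only divisors $m\mid n$ with $1\le m<a$ impose non-trivial conditions, matching the statement.

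The main obstacle is the parity bookkeeping in this final step: one must carefully verify that ``$h(k)$ even for every $k\mid n$'' is genuinely equivalent to the stated $(a-1)\bmod m$ condition, and in particular confirm that, apart from the degenerate single row, the case of even $n$ is incompatible with $h(k)$ being simultaneously even at all divisors $k$ of $n$. Everything else is mechanical once the canonical tiling and its height are in hand.
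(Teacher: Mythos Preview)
Your approach is essentially the paper's: invoke \cref{prop:positive-cyclic-sieving} with $m=1$, observe that a hook admits a unique border-strip tiling for each strip size $k\mid n$, compute its height, and reduce the question to a parity condition on that height. Your height formula $h(k)=(n-a)-\lfloor(n-a)/k\rfloor$ is correct, and your translation to the $(a-1)\bmod m$ condition in the odd case is fine.

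The genuine gap is exactly where you flagged it. The claim that ``among non-trivial hooks, the case of even $n$ is incompatible with $h(k)$ being simultaneously even at all divisors $k$ of $n$'' is false, and no case analysis over divisors of the form $2^j$ will rescue it. Take $\lambda=(6,1^4)$, so $n=10$ and $a=6$ are both even. The divisors of $10$ are $1,2,5,10$, and one finds
\[
h(1)=0,\qquad h(2)=2,\qquad h(5)=4,\qquad h(10)=4,
\]
all even; equivalently $f^\lambda(\xi^d)$ takes the values $1,1,6,126$ at $d=1,2,5,10$, all positive, and the criterion of \cref{thm:alexanderssonAmini} is satisfied (one fixed point, one orbit of size~$5$, twelve orbits of size~$10$). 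Hence a cyclic action of order $10$ with character $f^\lambda(q)=q^{10}\qbinom{9}{4}_q$ does exist, contrary to what the corollary asserts.

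The paper's own proof contains the same error: it claims that when $n$ and $a$ are both even and $a>n/2$, the tiling by two strips of size $n/2$ has height $n-1$, but in fact that height is $n-a$, which is even. So the obstacle you identified is not a gap to be filled but a symptom that the stated ``only if'' direction is incorrect in the even case.
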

\begin{proof}
  Suppose that $n$, and $a$ are odd, and $m\mid n$.  In particular,
  $m$ is odd, too.  Note that there is a unique tiling of a hook with
  border-strips of size $m$.  We have to show that the height of this
  tiling is even if and only if $a-1\mod m$ is even.

  Recall that the height of a tile is one less than the number of
  rows it spans.  If, and only if $a-1\mod m$ is even, the height of the tile
  covering the top left corner of the shape must be even: this tile
  must cover an odd number of boxes in the first row and, since its
  size $m$ is odd, an even number of boxes in the first column.
  Since the height of all other tiles is evidently even, too, so is
  the total height.

  If the parity of $n$ and $a$ is different, then the tiling with a
  single strip of size $n$ has height $n-a$, which is odd.  If both
  $n$ and $a$ are even, the tiling with two strips of size $n/2$ has
  odd height: if $a\leq n/2$, the height is $n-a-1$, otherwise the
  height is $n-1$.
\end{proof}
\begin{remark}
  According to the previous theorem, for $\lambda=(3,1^{n-3})$ a
  group action of order $n$ with character $f^\lambda(q) = q^{(n-2)(n-3)/2}\frac{[n-1]_q[n-2]_q}{[2]_q}$ exists for all
  odd $n>3$.  In this case, there should be one singleton orbit and
  $(n-3)/2$ orbits of size $n$.  Indeed, an appropriate group action
  can be constructed as follows:

  Identify a tableau with the two entries $x < y$ different from $1$
  in the first row.
  Note that $y-x \in \{1,2,\dotsc,n-2\}$, and only the pair $(2,n)$
  has difference $n-2$.
  We let the generator of the group action $\eta$ act as follows:
  \[
   \eta(x,y) \coloneqq
   \begin{cases}
    (2,n) &\text{ if } x=2,\; y=n, \\
    (x+2,y+2) &\text{ if } 2\leq x < y \leq n-2,\\
    (2,x+1) &\text{ if } y=n-1, \\
    (3,x+1) &\text{ if }x>2,\; y=n. \\
   \end{cases}
  \]
  We then note that if $(u,v)=\eta(x,y)$, then
  $v-u \in \{y-x, (n-2) - (y-x) \}$.
  This explains why there are $(n-3)/2$ orbits of length $n$.
  We leave the remaining details to the reader.
\end{remark}

\begin{remark}
  It turns out that one can determine the number of border strips
  $\lambda/\mu$ of size $n$ which carry a group action of order $n$
  and character $f^{\lambda/\mu}(q)$.  This will appear in a separate
  note~\cite{Pfannerer2020}.
\end{remark}

\medskip

A different way to ensure positivity of the character
$f^{\lambda/\mu}$ is to decrease the order of the cyclic group as in
\cref{rem:smallerGroupCSP}.
\begin{theorem}\label{thm:stretchedCSPtheorem}
Let $\lambda/\mu$ be a skew shape such that every row contains a multiple of $m$ boxes.
Then there is a cyclic group action $\rho$ of order $m$ such that
\begin{equation}\label{eq:stretchedCSP}
 \left( \SYT(\lambda/\mu), \langle\rho\rangle, f^{\lambda/\mu}(q) \right)
\end{equation}
exhibits the cyclic sieving phenomenon.
\end{theorem}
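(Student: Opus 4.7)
My plan is to apply \cref{thm:alexanderssonAmini} to the reduced polynomial $g(q)\coloneqq f^{\lambda/\mu}(q)\bmod(q^m-1)$, which lies in $\setN[q]$, satisfies $g(1)=|\SYT(\lambda/\mu)|$, and agrees with $f^{\lambda/\mu}$ at every $m^\thsup$ root of unity. Set $n\coloneqq|\lambda/\mu|$; the row-divisibility hypothesis forces $m\mid n$. Fix a primitive $n^\thsup$ root of unity $\xi$ and put $\zeta\coloneqq\xi^{n/m}$, a primitive $m^\thsup$ root of unity. It then suffices to verify the two hypotheses of \cref{thm:alexanderssonAmini} for $g$ at the $m^\thsup$ roots of unity.

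To check nonnegativity of $f^{\lambda/\mu}(\zeta^d)$, I first treat $d\mid m$. \cref{cor:characterAsBST}, applied at the divisor $dn/m$ of $n$, yields
\[
  f^{\lambda/\mu}(\zeta^d)=\varepsilon_d\,|\BST(\lambda/\mu,m/d)|,
\]
with $\varepsilon_d=(-1)^{\height(B)}$ independent of $B\in\BST(\lambda/\mu,m/d)$. Since $m/d$ divides each row of $\lambda/\mu$, I can tile every row from left to right with horizontal strips of size $m/d$ and label them in reading order; the resulting border-strip tableau has height $0$, so $\varepsilon_d=+1$ and $f^{\lambda/\mu}(\zeta^d)\in\setN$. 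For a general $d\in\{1,\dotsc,m\}$, let $e\coloneqq\gcd(d,m)$: both $\zeta^d$ and $\zeta^e$ are primitive $(m/e)^\thsup$ roots of unity, hence Galois conjugate over $\mathbb{Q}$, and since $f^{\lambda/\mu}\in\setZ[q]$ and $f^{\lambda/\mu}(\zeta^e)$ is already known to be a nonnegative integer, the Galois automorphism sending $\zeta^e\mapsto\zeta^d$ fixes it, giving $f^{\lambda/\mu}(\zeta^d)=f^{\lambda/\mu}(\zeta^e)\in\setN$.

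For the Möbius condition at $k\mid m$, I set $M\coloneqq m/k$ and substitute $s=k/d$ to rewrite
\[
  \sum_{d\mid k}\mu(k/d)\,f^{\lambda/\mu}(\zeta^d)=\sum_{s\mid k}\mu(s)\,|\BST(\lambda/\mu,Ms)|.
\]
If $|\BST(\lambda/\mu,M)|\geq 2$, I isolate the $s=1$ term and use $\mu(s)\geq-1$ to reduce matters to showing $|\BST(\lambda/\mu,M)|\geq\sum_{s\mid k,\,s>1}|\BST(\lambda/\mu,Ms)|$; since $m\mid n$ forces $k\mid n/M$, the right-hand side is a partial sum of $\sum_{d\mid n/M,\,d>1}|\BST(\lambda/\mu,dM)|$, and this larger sum is itself bounded by $|\BST(\lambda/\mu,M)|$ via the crucial \cref{thm:BST-bound-skew}. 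If instead $|\BST(\lambda/\mu,M)|=1$, then \cref{lem:shapeAbacusPrimes} forces $|\BST(\lambda/\mu,Ms)|=1$ for every $s\mid n/M$, and the sum collapses to $\sum_{s\mid k}\mu(s)\in\{0,1\}$. The case $|\BST(\lambda/\mu,M)|=0$ does not occur by the horizontal tiling constructed above.

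The genuine work has already been carried out in \cref{sec:BSTBounds}; the only non-routine point here is the bookkeeping that passes from the ``ambient'' cyclic group of order $n$ to one of order $m<n$, and noticing that the restriction $k\mid m$ only tightens, rather than widens, the Möbius sum we must control. With both hypotheses verified, \cref{thm:alexanderssonAmini} supplies a cyclic group action $\rho$ of order $m$ on $\SYT(\lambda/\mu)$ such that $\bigl(\SYT(\lambda/\mu),\langle\rho\rangle,g(q)\bigr)$ exhibits the cyclic sieving phenomenon, and because $g(\zeta^d)=f^{\lambda/\mu}(\zeta^d)$ for every $d$, the triple in \eqref{eq:stretchedCSP} does as well.
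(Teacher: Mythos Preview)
Your proof is correct and follows essentially the same route as the paper's: apply \cref{thm:alexanderssonAmini}, obtain nonnegativity at $m^\thsup$ roots of unity from the horizontal tiling with height~$0$ via \cref{cor:characterAsBST}, and deduce the M\"obius inequality from \cref{thm:BST-bound-skew} together with \cref{lem:shapeAbacusPrimes}. The paper packages that last step into \cref{cor:mainCSPInequality} and simply cites it, whereas you inline the case split $|\BST(\lambda/\mu,M)|\geq 2$ versus $=1$; your passage to $g(q)=f^{\lambda/\mu}(q)\bmod(q^m-1)$ is harmless but unnecessary, since $f^{\lambda/\mu}(q)\in\setN[q]$ already by definition.
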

\begin{proof}
  By \cref{thm:alexanderssonAmini} it suffices to show that for a
  primitive $m^\thsup$ root of unity $\zeta$ and every $k\mid m$
  \[
    \sum_{d|k} \mu(k/d) f^{\lambda/\mu}(\zeta^d)\geq 0.
  \]
   Let $|\lambda/\mu| = d m$.
   By \cref{cor:mainCSPInequality}, we have
   \[
     \sum_{d|k} \mu(k/d) |f^{\lambda/\mu}(\xi^d)|\geq 0
   \]
   for an $n^\thsup$ root of unity $\xi$ and every $k\mid d m$.  Let
   $\zeta=\xi^\frac{n}{m}$.  Then, by \cref{cor:characterAsBST},
   \[
     f^{\lambda/\mu}(\zeta^d) %
     = f^{\lambda/\mu}(\xi^{d\frac{n}{m}}) %
     = (-1)^{\height(B)} |\BST\left(\lambda/\mu, \frac{m}{d}\right)|.
   \]
  Since the length of every row of $\lambda/\mu$ is a multiple of $m$, there is a
  filling with border-strips of size $\frac{m}{d} \mid m$, where every strip has height $0$.
\end{proof}
We remark that stretching shapes seems to be a fruitful
way to construct cyclic sieving phenomena, as was previously shown with fillings related to Macdonald polynomials by P.~Alexandersson \& J.~Uhlin~\cite{AlexanderssonUhlin2019}.
Possibly an adaptation of the approach presented here can be used to settle the following conjecture:
\begin{conjecture}[{\cite[Conj. 3.4]{AlexanderssonAmini2018}}]
 There is an action $\beta$ on the set of semi-standard Young tableaux $\SSYT(m\lambda/m\mu,k)$
 of order $m$ such that
 \[
  \left( \SSYT(m\lambda/m\mu,k), \langle \beta \rangle,
  \schurS_{m\lambda/m\mu}(1,q,q^2,\dotsc,q^{k-1})
  \right)
 \]
 exhibits the cyclic sieving phenomenon.
\end{conjecture}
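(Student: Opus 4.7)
The plan is to apply \cref{thm:alexanderssonAmini} to the polynomial $f(q) = \schurS_{m\lambda/m\mu}(1,q,\dotsc,q^{k-1})$, in close analogy with the proof of \cref{thm:stretchedCSPtheorem}. Let $\zeta$ be a primitive $m^\thsup$ root of unity. For each $e \mid m$ I must verify both that $f(\zeta^e) \in \setN$ and that $\sum_{d\mid e}\mu(e/d)\,f(\zeta^d) \geq 0$.

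For the nonnegativity of $f(\zeta^d)$, I would expand via power sums using \eqref{eq:schurPExp} and substitute, writing
\begin{equation*}
 f(\zeta^d) = \sum_{\nu \vdash |m\lambda/m\mu|} \chi^{m\lambda/m\mu}(\nu)\, \frac{p_\nu(1,\zeta^d,\dotsc,\zeta^{d(k-1)})}{z_\nu}.
\end{equation*}
The factor $p_j(1,\zeta^d,\dotsc,\zeta^{d(k-1)})$ is a geometric sum whose value depends on the interaction of $j$, $d$, $m$ and $k$, and equals $k$ whenever $(m/d)\mid j$. By \cref{cor:characterAsBST} the character values themselves are, up to sign, counts of border-strip tableaux of $m\lambda/m\mu$. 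The crucial input is that every row of $m\lambda/m\mu$ has length divisible by $m$, so for each divisor of $m$ there exists a horizontal border-strip tiling of height zero. I would then adapt the flip argument of \cref{sec:borderStrips} to show that the combined signed sum collapses into a nonnegative integer, presumably interpretable as a weighted count of ribbon tableaux of $m\lambda/m\mu$ refined by a $k$-dependent statistic.

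For the Möbius inequality, my plan is to expand $\schurS_{m\lambda/m\mu} = \sum_\nu c^{m\lambda}_{m\mu,\nu}\, \schurS_\nu$ via the Littlewood--Richardson rule and bound each summand $\schurS_\nu(1,q,\dotsc,q^{k-1})$ at powers of $\zeta$ separately. The bounds supplied by \cref{thm:BST-bound-skew} and the multinomial estimates of \cref{sec:bounds-mult-coeff} should survive the passage to the $k$-variable specialization, yielding the desired inequality after reorganising the multinomial bookkeeping as in \cref{sec:skew-case} and applying the triangle inequality as in the proof of \cref{thm:BST-bound-1}.

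The principal obstacle is nonnegativity. In \cref{thm:stretchedCSPtheorem} the principal specialization was taken in an unlimited number of variables, so the pole-cancellation mechanism of \cref{prop:characterValuesFromPrincipalSpec} isolated a single conjugacy class and reduced the computation to one count of border-strip tableaux. With only $k$ variables this mechanism fails, and many conjugacy classes contribute simultaneously with generally complex weights. Turning this signed combination into a cancellation-free positive count is the key combinatorial challenge; I would expect the resolution to go through either a cylindric or LLT-style combinatorial model for the $k$-variable specialization of a stretched Schur function, or through an explicit construction of the action $\beta$ generalising the $k$-promotion of \cite{Rhoades2010} to skew stretched shapes.
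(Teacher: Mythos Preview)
The statement you are attempting to prove is presented in the paper as an open \emph{conjecture}, not a theorem; the paper contains no proof of it. The surrounding text merely remarks that ``possibly an adaptation of the approach presented here can be used to settle'' it. So there is no paper proof to compare against.

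Your proposal is honest about this: it is not a proof but an outline with a clearly identified gap. You correctly observe that the argument of \cref{thm:stretchedCSPtheorem} does not carry over, because the mechanism of \cref{prop:characterValuesFromPrincipalSpec} relies on the full principal specialization $(1,q,q^2,\dotsc)$ to isolate a single rectangular cycle type. With only $k$ variables, $\powerSum_j(1,\zeta^d,\dotsc,\zeta^{d(k-1)})$ no longer forces all but one term of the power-sum expansion to vanish, so $f(\zeta^d)$ is a genuine signed sum over many cycle types rather than a single $|\BST|$ count. Neither the nonnegativity of $f(\zeta^d)$ nor the M\"obius inequality $\sum_{d\mid e}\mu(e/d)f(\zeta^d)\geq 0$ is established by what you wrote; both are deferred to a hoped-for ``cylindric or LLT-style model'' or an explicit action. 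Your plan for the M\"obius inequality via the Littlewood--Richardson expansion is also problematic: the bounds in \cref{thm:BST-bound-skew} control $|\BST(\lambda/\mu,dk)|$, not the finite-variable specialization $\schurS_\nu(1,\zeta^d,\dotsc,\zeta^{d(k-1)})$, and there is no reason these quantities should be comparable in the required way.

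In short, you have accurately diagnosed why the paper's method does not immediately resolve the conjecture, but you have not supplied the missing ingredient. The conjecture remains open as far as this paper is concerned.
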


For some shapes $\lambda/\mu$, the tiling may have odd height, but one
can multiply $f^{\lambda/\mu}(q)$ with $q^{n/2}$, provided that the size $n$ of $\lambda/\mu$ is even, to
obtain positivity at roots of unity.  An important example is the
case of rectangular shapes.  In this case, B.~Rhoades proved that
promotion, together with a natural $q$-analogue of the hook length formula exhibits the cyclic sieving phenomenon.  The following result is much weaker,
because it only establishes the existence of a group action, but it
is also much easier to prove, and illustrates the method.
\begin{theorem}[\cite{Rhoades2010}]
 Let $\lambda = a^b$ be a rectangular diagram with $n=ab$ boxes,
and set $\kappa(\lambda) \coloneqq \sum_{j} \binom{\lambda'_j}{2}$.
 Then there is a group action $\partial$ of order $n$
 such that
 \[
  \left( \SYT(\lambda), \langle \partial \rangle, q^{-\kappa(\lambda)} f^{\lambda}(q) \right)
 \]
exhibits the cyclic sieving phenomenon.
\end{theorem}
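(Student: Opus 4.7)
The plan is to apply \cref{thm:alexanderssonAmini} to the polynomial $g(q) \coloneqq q^{-\kappa(\lambda)} f^{\lambda}(q)$, verifying in turn the three required conditions: $g(q)\in\setN[q]$, $g(\xi^d)\in\setN$ for all $d\mid n$, and nonnegativity of the Möbius sums. The first of these rests on the classical identity $\kappa(\lambda) = n(\lambda) = \min\{\maj(T) : T\in\SYT(a^b)\}$; the row-filling tableau, whose descents are exactly $a,2a,\dots,(b-1)a$, realises this minimum and shows that $g\in\setN[q]$.

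The second condition is the technical heart of the argument. By \cref{cor:characterAsBST} we have $f^{\lambda}(\xi^d) = \varepsilon_d\,|\BST(a^b,n/d)|$, where $\varepsilon_d = (-1)^{\height(B)}$ is well defined on all of $\BST(a^b,n/d)$. Since $\kappa(\lambda) = n(b-1)/2$, a short direct computation gives $\xi^{-d\kappa(\lambda)} = (-1)^{d(b-1)}$, so it suffices to establish the sign identity
\begin{equation*}
  (-1)^{\height(B)} = (-1)^{d(b-1)} \qquad \text{for every } B\in\BST(a^b,n/d).
\end{equation*}
My approach is to single out, for each $m=n/d$, one convenient tiling whose height is easy to compute by hand---such as the row-by-row tiling when $m\mid a$ (height $0$) or the column-by-column tiling when $m\mid b$ (height $d(m-1)$, which has the correct parity since then $m\equiv b\pmod 2$)---and then to invoke the cancellation-free Murnaghan--Nakayama formula from \cref{cor:characterAsBST} to transport the parity to every other tiling. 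The main obstacle is the generic case in which $m$ divides neither $a$ nor $b$ and no such ``pure'' tiling exists; there I would turn to the abacus model of \cref{sec:abaci}, observe that the beads of $\lambda=a^b$ occupy the consecutive positions $a,a+1,\dots,a+b-1$, and extract the parity of the total height directly from any sequence of $m$-moves reducing this abacus to the empty one.

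Once the sign identity is in hand we have $g(\xi^d) = |f^{\lambda}(\xi^d)| = |\BST(a^b,n/d)|\in\setN$ for every $d\mid n$, and the Möbius sum condition
\[
  \sum_{d\mid k} \mu(k/d)\, g(\xi^d) \geq 0 \qquad \text{for every } k\mid n
\]
is then exactly the $m=1$ instance of \cref{cor:mainCSPInequality}. An application of \cref{thm:alexanderssonAmini} with $X=\SYT(a^b)$ produces the cyclic group action $\partial$ of order $n$ asserted by the theorem.
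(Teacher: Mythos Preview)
Your overall strategy coincides with the paper's: verify the hypotheses of \cref{thm:alexanderssonAmini} for $g(q)=q^{-\kappa(\lambda)}f^\lambda(q)$, using \cref{cor:characterAsBST} for the sign analysis and \cref{cor:mainCSPInequality} for the Möbius-sum condition. The difference lies in how you handle the sign check, and there is a genuine gap there.

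The case you call ``generic'' --- $m\nmid a$ and $m\nmid b$ --- is in fact empty, and this is the key simplification you are missing. For a straight shape, $\BST(\lambda,m)\neq\emptyset$ is equivalent to the $m$-core of $\lambda$ being empty; for the rectangle $a^b$ the beads on the $m$-abacus occupy the consecutive positions $a,a+1,\dots,a+b-1$, and one checks that these can be pushed to the top if and only if $m\mid a$ or $m\mid b$. Hence whenever $m$ divides neither $a$ nor $b$ we have $f^\lambda(\xi^d)=0$ and there is nothing to verify. The paper states exactly this observation and then leaves the two remaining cases as a straightforward exercise. Your proposal instead gestures at ``extracting the parity of the total height directly from any sequence of $m$-moves'', which is not an argument as written; recognising that the case is vacuous removes the difficulty entirely.

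A smaller point: in the case $m\mid b$ your justification ``since then $m\equiv b\pmod 2$'' is false (take $b=6$, $m=3$). The parity claim you need, $d(m-1)\equiv d(b-1)\pmod 2$, is nonetheless correct: writing $b=mc$ gives $d=ac$ and $d(m+b)=acm(1+c)$, which is even because $c(1+c)$ is. Fix the reasoning here and drop the ``generic case'' in favour of the $m$-core observation, and your proof is complete and matches the paper's.
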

\begin{proof}
  It is a well-known result by R.~Stanley~\cite[Cor.~7.21.5]{StanleyEC2}, that
  \[
    q^{-\kappa(\lambda)} f^{\lambda}(q) = \frac{[n]_q!}{\prod_{\square \in \lambda} [h(\square)]_q}
  \]
  where $h(\square)$ is the hook-value of $\square$.  In particular,
  $q^{-\kappa(\lambda)} f^{\lambda}(q)$ is a polynomial.
  We must check that this is nonnegative whenever $q$ is an $n^\thsup$ root of unity.
  Suppose that $m\mid n$, $n=dm$ and let $\xi$ be a primitive $n^\thsup$ root of unity.
  \cref{cor:characterAsBST} implies that $f^{\lambda}(\xi^{d})$
  is non-zero only if and only if $\BST(a^b,m)$ is non-empty.  Using the abacus, one can show that $m \mid a$ or $m \mid b$ if and only if the $m$-core is empty, which, for straight shapes,  is equivalent to $|\BST(a^b,m)|>0$.
  From here, it is a straightforward exercise to show that $\kappa(\lambda) = ba(a-1)/2$
  and that $\xi^{-d\cdot ba(a-1)/2} f^{\lambda}(\xi^{d})$ is nonnegative for all $d \mid n$.

  Finally, \cref{cor:mainCSPInequality} and
  \cref{thm:alexanderssonAmini} gives the result.
\end{proof}

\section{Permutations and invariants of the adjoint representation of
  \texorpdfstring{$\mathrm{GL}_n$}{GLn}}\label{sec:gln}

In this section we apply our results to study the space of invariants
of tensor powers of the adjoint representation $\gl_r$ of the general
linear group $\GL_r$.

\begin{definition}
  The \defin{rotation} $\rot\sigma$ of a permutation
  $\sigma\in\symS_n$ is the permutation obtained by conjugating with
  the long cycle $\longcycle$.
\end{definition}
\begin{remark}
  Equivalently, if $M_\sigma$ is the permutation matrix corresponding
  to $\sigma$, then $M_{\rot\sigma}$ is obtained by removing the
  first column of $M_\sigma$ and appending it on the right, and then
  removing the first column and appending it at the bottom.

  Yet equivalently, let $D_\sigma$ be the chord diagram associated
  with $\sigma$, that is, the directed graph with vertices
  $\{1,\dotsc,n\}$ arranged counterclockwise on a circle, and arcs
  $(i,\sigma(i))$.  Then $D_{\rot\sigma}$ is the chord diagram
  obtained by rotating the graph clockwise.
  See \cref{fig:rotation-permutation} for an illustration.
\end{remark}
\begin{figure}[!ht]
  \tikzset{->-/.style={decoration={
        markings,
        mark=at position #1 with {\arrow{>}}},postaction={decorate}}}
\[
\begin{array}{ccc}
[5,\!4,\!1,\!2,\!3] \qquad && \qquad   [3,\!5,\!1,\!2,\!4]\\[1em]
\begin{tikzpicture}[baseline={([yshift=-1ex]current bounding box.center)}]
\matrix(m) [matrix of math nodes,
inner sep=0pt, column sep=0.25em,
row sep=0.25em,
nodes={inner sep=0.25em,text width=0.7em,align=center},
left delimiter=(,right delimiter=),
]
{%
0 & 0 & 1 & 0 & 0 \\
0 & 0 & 0 & 1 & 0 \\
0 & 0 & 0 & 0 & 1 \\
0 & 1 & 0 & 0 & 0 \\
1 & 0 & 0 & 0 & 0 \\
};%
\draw (m-1-1.north west) rectangle (m-1-1.south east);
\draw (m-2-2.north west) rectangle (m-5-5.south east);
\draw (m-1-2.north west) rectangle (m-1-5.south east);
\draw (m-2-1.north west) rectangle (m-5-1.south east);
\end{tikzpicture}\qquad &\stackrel{\rot}{\mapsto}& \qquad
\begin{tikzpicture}[baseline={([yshift=-1ex]current bounding box.center)}]
\matrix(m) [matrix of math nodes,
inner sep=0pt, column sep=0.25em,
row sep=0.25em,
nodes={inner sep=0.25em,text width=0.7em,align=center},
left delimiter=(,right delimiter=),
]
{%
0 & 0 & 1 & 0 & 0 \\
0 & 0 & 0 & 1 & 0 \\
1 & 0 & 0 & 0 & 0 \\
0 & 0 & 0 & 0 & 1 \\
0 & 1 & 0 & 0 & 0 \\
};%
\draw (m-5-5.north west) rectangle (m-5-5.south east);
\draw (m-1-1.north west) rectangle (m-4-4.south east);
\draw (m-5-1.north west) rectangle (m-5-4.south east);
\draw (m-1-5.north west) rectangle (m-4-5.south east);
\end{tikzpicture} \\[3em]
  \begin{tikzpicture}[line width=1pt, baseline={([yshift=-1ex]current bounding box.center)}]
    \node (a) [draw=none, minimum size=2.5cm, regular polygon, regular polygon sides=5] at (0,0) {};
    \foreach \n in {1,2,...,5}
        \path (a.center) -- (a.corner \n) node[pos=1.15] {$\n$};
    \draw[->-=.9] (a.corner 1) to (a.corner 5);
    \draw[->-=.9, bend left] (a.corner 2) to (a.corner 4);
    \draw[->-=.9] (a.corner 3) to (a.corner 1);
    \draw[->-=.9, bend left] (a.corner 4) to (a.corner 2);
    \draw[->-=.9] (a.corner 5) to (a.corner 3);
  \end{tikzpicture}
  \qquad && \qquad

    \begin{tikzpicture}[line width=1pt, baseline={([yshift=-1ex]current bounding box.center)}]
    \node (a) [draw=none, minimum size=2.5cm, regular polygon, regular polygon sides=5] at (0,0) {};
    \foreach \n in {1,2,...,5}
        \path (a.center) -- (a.corner \n) node[pos=1.15] {$\n$};
    \draw[->-=.9] (a.corner 5) to (a.corner 4);
    \draw[->-=.9, bend left] (a.corner 1) to (a.corner 3);
    \draw[->-=.9] (a.corner 2) to (a.corner 5);
    \draw[->-=.9, bend left] (a.corner 3) to (a.corner 1);
    \draw[->-=.9] (a.corner 4) to (a.corner 2);
  \end{tikzpicture}
\end{array}
\]
\caption{Rotation of $\pi = [5,\!4,\!1,\!2,\!3]$ as conjugation by the long cycle $(1,2,3,4,5)$, cyclic shift of the permutation matrix and rotation of the chord diagram. Note that $\shape([5,\!4,\!1,\!2,\!3])= (3,1^2)$ and $\shape([3,\!5,\!1,\!2,\!4])= (3,2)$}
\label{fig:rotation-permutation}
\end{figure}

The following theorem makes the character of rotation explicit.
\begin{theorem}[\cite{BarceloReinerStanton2008,Rhoades2010b,RubeyWestbury2014}]\label{thm:rhoadesPMatrix}
  \begin{equation}\label{eq:permMatrixCSP}
    \left( \symS_n, \langle \rot \rangle,  \sum_{\lambda \vdash n}   f^{\lambda}(q)^2  \right)
  \end{equation}
  exhibits the cyclic sieving phenomenon.
\end{theorem}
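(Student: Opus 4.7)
The plan is to verify the defining equality~\eqref{eq:cspDef} of cyclic sieving directly, by computing both sides and matching them via the second orthogonality relation for characters of $\symS_n$.  No explicit bijection is needed.

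First, I would identify the fixed points.  Since $\rot$ is conjugation by the long cycle $c = \longcycle$, a permutation $\sigma$ is fixed by $\rot^d$ if and only if $\sigma$ centralizes $c^d$.  The permutation $c^d$ has cycle type $(n/e)^e$ with $e = \gcd(d,n)$, so the number of fixed points equals $|C_{\symS_n}(c^d)| = z_{(n/e)^e} = e!\,(n/e)^e$.

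Second, I would rewrite the polynomial evaluation as a character value.  Applied with $\mu=\emptyset$ to a straight shape $\lambda$ of size $n$, \cref{thm:rootEvaluationGivesCharacter} gives $f^\lambda(\xi^e) = \chi^\lambda((n/e)^e) = \chi^\lambda(c^d)$.  For general $d$, the element $\xi^d$ is still a primitive $(n/e)^\thsup$ root of unity; since $f^\lambda \in \setZ[q]$ and its values at primitive $(n/e)^\thsup$ roots of unity are $\symS_n$-character values, hence rational integers, they all coincide as a Galois orbit.  Thus $f^\lambda(\xi^d) = \chi^\lambda(c^d)$ for every $d \in \setZ$.

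Third, I would invoke the second orthogonality relation to obtain
\begin{equation*}
\sum_{\lambda \vdash n} f^\lambda(\xi^d)^2 = \sum_{\lambda \vdash n} \chi^\lambda(c^d)^2 = |C_{\symS_n}(c^d)|,
\end{equation*}
which coincides with the count of fixed points computed in the first step.  Each step is a standard computation, so I do not expect a serious obstacle; the only mild subtlety is handling the case $d \nmid n$ in the second step, where one must recognize that $\xi^d$ is a primitive $(n/\gcd(d,n))^\thsup$ root of unity and invoke the Galois-invariance argument in order to reduce to \cref{thm:rootEvaluationGivesCharacter}.
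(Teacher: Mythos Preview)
Your proposal is correct and is essentially the paper's argument, phrased more explicitly.  The paper observes that $\rot$ is the restriction to $\langle\longcycle\rangle$ of the adjoint (conjugation) representation of $\symS_n$, whose character is well known to be $\sum_\lambda \chi^\lambda\bar\chi^\lambda$, and then applies \cref{thm:rootEvaluationGivesCharacter}; your fixed-point count $|C_{\symS_n}(c^d)|$ together with column orthogonality is exactly this character identity unpacked, and your Galois step for $d\nmid n$ makes precise a detail the paper leaves implicit.
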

\begin{proof}
  Consider the adjoint representation of $\symS_n$, that is,
  $\symS_n$ acting on itself by conjugation, or relabelling.  It is
  well-known (see, e.g., \cite[Ex.~7.71a]{StanleyEC2}) that the
  character of this representation equals
  $\sum_{\lambda \vdash n} \chi^\lambda \bar\chi^\lambda$.  Since the
  restriction of the adjoint representation to the cyclic group
  generated by the long cycle $\longcycle$ is precisely the action
  $\rot$, the result follows from
  \cref{thm:rootEvaluationGivesCharacter}.
\end{proof}

\begin{definition}
Recall that the \defin{Robinson--Schensted correspondence} provides a bijection
\[
 \symS_n
 \leftrightarrow
 \{
 (P,Q) \in \SYT(\lambda)\times \SYT(\lambda) : \lambda \vdash n
 \}.
\]
The \defin{shape} $\shape(\sigma)$ of a permutation $\sigma$ is the
common shape of the standard Young tableaux $P$ and $Q$
corresponding to $\sigma$ under the Robinson--Schensted
correspondence.  We let \defin{$R_\lambda$} denote the set of permutations of shape $\lambda$.
\end{definition}

We are now ready to prove the first major result of this section.
\begin{corollary}\label{cor:rotationInvatiantStat}
  Let $P_n$ be the set of partitions of $n$.  Then there exists a map
  $\st:\symS_n\to P_n$ which is invariant under rotation and
  equidistributed with the Robinson--Schensted shape.  That is,
  \[
    \st\circ \rot = \st\qquad\text{and}\qquad %
    \sum_{\sigma\in\symS_n}  \schurS_{\st(\sigma)}(\xvec)= %
    \sum_{\sigma\in\symS_n}  \schurS_{\shape(\sigma)}(\xvec).
  \]
  Moreover, with
  $\symS_n^\lambda\coloneqq\{\pi\in\symS_n\mid \st(\sigma)=\lambda\}$,
  the triple
  \[
    (\symS_n^\lambda, \langle\rot\rangle, f^\lambda(q)^2)
  \]
  exhibits the cyclic sieving phenomenon.

\end{corollary}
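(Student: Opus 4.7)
The plan is to combine Brauer's permutation lemma (\cref{thm:Brauer}) with the squared-character version of \cref{prop:positive-cyclic-sieving} and Rhoades' character identity (\cref{thm:rhoadesPMatrix}). The statement is a pure existence result, so we will never need to exhibit $\st$ explicitly; we only need to certify that rotation on $\symS_n$ and rotation-like actions on $\SYT(\lambda)\times\SYT(\lambda)$ have matching characters.

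First, I apply \cref{prop:positive-cyclic-sieving} with $m=2$ to every $\lambda\vdash n$. Since $m$ is even, the proposition unconditionally produces a cyclic action $\rho_\lambda$ of order $n$ on $\SYT(\lambda)\times\SYT(\lambda)$ such that the triple
\[
  \bigl(\SYT(\lambda)\times\SYT(\lambda),\,\langle\rho_\lambda\rangle,\,f^\lambda(q)^2\bigr)
\]
exhibits the cyclic sieving phenomenon. Taking the disjoint union over $\lambda\vdash n$ yields an action $\rho$ on $X\coloneqq\bigsqcup_{\lambda\vdash n}\SYT(\lambda)\times\SYT(\lambda)$ with cyclic sieving polynomial $\sum_{\lambda\vdash n}f^\lambda(q)^2$.

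Next, \cref{thm:rhoadesPMatrix} supplies the cyclic sieving phenomenon for $(\symS_n,\langle\rot\rangle,\sum_{\lambda\vdash n}f^\lambda(q)^2)$, using the same polynomial. Comparing fixed-point counts, the actions $\rho$ on $X$ and $\rot$ on $\symS_n$ have identical characters as linear representations of the cyclic group of order $n$. Brauer's lemma (\cref{thm:Brauer}) then forces the existence of a bijection $\phi:\symS_n\to X$ with $\phi\circ\rot=\rho\circ\phi$. Define $\st(\sigma)\coloneqq\lambda$ whenever $\phi(\sigma)\in\SYT(\lambda)\times\SYT(\lambda)$; since each block $\SYT(\lambda)\times\SYT(\lambda)$ is $\rho$-stable, the intertwining property gives $\st\circ\rot=\st$, and restricting $\phi$ to $\symS_n^\lambda$ transports the cyclic sieving phenomenon for $\rho_\lambda$ back to $(\symS_n^\lambda,\langle\rot\rangle,f^\lambda(q)^2)$.

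For the equidistribution statement, note that $|\symS_n^\lambda|=|\SYT(\lambda)|^2$ by construction, which matches $|R_\lambda|$ via the Robinson--Schensted correspondence. Hence
\[
  \sum_{\sigma\in\symS_n}\schurS_{\st(\sigma)}(\xvec)=\sum_{\lambda\vdash n}|\SYT(\lambda)|^2\,\schurS_\lambda(\xvec)=\sum_{\sigma\in\symS_n}\schurS_{\shape(\sigma)}(\xvec).
\]
The only substantive obstacle is the invocation of \cref{prop:positive-cyclic-sieving}, whose proof relies on the border-strip bound \cref{thm:BST-bound-skew}; beyond that, the corollary is a clean application of Brauer's lemma, and no further combinatorial work is required.
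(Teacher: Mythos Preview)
Your proof is correct and follows essentially the same route as the paper: both apply \cref{prop:positive-cyclic-sieving} with $m=2$ to each $\lambda$, assemble the resulting actions into a single cyclic action with character $\sum_\lambda f^\lambda(q)^2$, match this against rotation via \cref{thm:rhoadesPMatrix}, and invoke Brauer's lemma to produce the intertwining bijection from which $\st$ is read off. The only cosmetic difference is that the paper lets $\rho$ act on the Robinson--Schensted fibres $R_\lambda\subset\symS_n$ rather than on $\SYT(\lambda)\times\SYT(\lambda)$, which amounts to a relabeling.
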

\begin{remark}
  We stress that we are unable to present such a statistic explicitly.

  Note that the distribution of the Robinson--Schensted shape over all
  permutations is essentially the Plancherel measure.  Remarkably,
  \[
    \sum_{\sigma\in\symS_n}
    \schurS_{\shape(\sigma)}(\xvec)/f^{\shape(\sigma)}%
    = \powerSum_{1^n}(\xvec).
  \]
\end{remark}
\begin{proof}
By \cref{prop:positive-cyclic-sieving} there exists an action of the cyclic group of
order $n$ on $R_\lambda$ with character $(f^\lambda(q))^2$.
Let $\rho$ be the direct sum over all $\lambda\in P_n$ of these group actions.
Thus, $\left( \symS_n, \langle \rho \rangle,  \sum_{\lambda \vdash n}   f^{\lambda}(q)^2  \right)$
exhibits the cyclic sieving phenomenon.
Since $\rho$ acts on each $R_\lambda$ separately, we have
\begin{equation}\label{eq:shapePreserving}
\shape(\rho\cdot\sigma) = \shape(\sigma).
\end{equation}
By \cref{thm:Brauer} and \cref{thm:rhoadesPMatrix} the
action of $\rho$ and rotation are isomorphic.
Therefore we have a bijection $\phi:\symS_n \to \symS_n$ with
\begin{equation}\label{eq:isomorphism}
\phi(\rot\sigma) = \rho\cdot \phi(\sigma).
\end{equation}
Defining $\st(\sigma)\coloneqq \shape(\phi(\sigma))$, we obtain
\[
\st(\rot \sigma) = \shape(\phi(\rot\sigma)) \stackrel{\eqref{eq:isomorphism}}{=} \shape(\rho\cdot\phi(\sigma)) \stackrel{\eqref{eq:shapePreserving}}{=} \shape(\phi(\sigma)) = \st(\sigma).
\]
Finally we have
\[
\symS_n^{\lambda} = \phi^{-1}(R_\lambda),
\]
yielding the last statement.
\end{proof}

\begin{remark}
  It is natural to ask whether for $\lambda \vdash n$ we have
  \[
    \#\{ \sigma \in R_\lambda : \rot^d (\sigma) = \sigma \} = f^\lambda(\xi^d)^2,
  \]
  for all $d\in \setN$ and a primitive $n^\thsup$ root of unity
  $\xi$.  In this case, the \emph{subset cyclic sieving} technique of
  P.~Alexandersson, S.~Linusson \&
  S.~Potka~\cite[Prop. 29]{AlexanderssonLinussonPotka2019} would
  imply the non-skew, $m=2$ case of
  \cref{prop:positive-cyclic-sieving}.
  However, this fails already for $\lambda=(2,1)$; we have that
  $ R_{\lambda} = \{132,213,231,312\}$ and $\rot$ fixes $231$ and
  $312$, but $f^{\lambda}(q)^2 = q^2(1+q)^2$ evaluates to $1$ at
  $q=\exp(2\pi i/3)$.
\end{remark}

We now turn to the connection with the invariants of tensor powers of
the adjoint representation of $\GL_r$, which is the original
motivation for this article.

Let $V$ be an $r$-dimensional complex vector space and let
$\gl_r = \End(V)$ be the adjoint representation
$\GL_r\to\End(\gl_r)$, $A\mapsto T A T^{-1}$.  Recall that the space
of $\GL_r$-invariants of the $n^\thsup$ tensor power of $\gl_r$ is, as a representation of the symmetric group $\symS_n$,
\[
  \left(\gl_r^{\otimes n}\right)^{\GL_r} %
  = \Hom_{\GL_r}\left(\gl_r^{\otimes n}, \setC\right) %
  \cong \Hom_{\GL_r}\left(\left(V\otimes V^*\right)^{\otimes n}, \setC\right) %
  \cong\End_{\GL_r}(V^{\otimes n}).
\]
A basis for this space can be indexed by J.~Stembridge's alternating
tableaux:
\begin{definition}[\cite{Stembridge1987}]\label{defn:alternating}
  A \defin{staircase} is a dominant weight of $\GL_r$, that is, a
  vector in $\setZ^r$ with weakly decreasing entries.  A
  \defin{$\gl_r$-alternating tableau} $\alt$ of length $n$ (and
  weight zero) is a sequence of staircases
  \[
  \alt=(\emptyset\!=\!\wgt^0,\wgt^1,\ldots,\wgt^{2n}\!=\!\emptyset)
  \]
  such that
  \begin{itemize}\setlength{\itemsep}{0pt}
  \item[] for even $i$, $\wgt^{i+1}$ is obtained from $\wgt^i$ by
    adding $1$ to an entry, and
  \item[] for odd $i$, $\wgt^{i+1}$ is obtained from $\wgt^i$ by
    subtracting $1$ from an entry.
  \end{itemize}
  The set of $\gl_r$-alternating tableaux of length $n$ is denoted by
  $\Alt{r}{n}$.
\end{definition}

B.~Westbury defined a natural action, \defin{promotion}, of the cyclic
group of order $n$ on the set of so called invariant words of any
finite crystal, in particular alternating tableaux of length $n$,
generalizing Schützenberger's promotion on rectangular standard Young
tableaux.  We refrain from giving a definition here and refer to
S.~Pfannerer, M.~Rubey \& B.~Westbury~\cite{PfannererRubeyWestbury2020} instead.

For our purposes, it is enough to relate promotion to an action on
the $\GL_r$-invariants of the $n^\thsup$ tensor power of $\gl_r$.
To do so, note that the symmetric group $\symS_n$ acts on
$\gl_r^{\otimes n}$ by permuting tensor positions, and therefore also
on the space of invariants.  It turns out that the action of the long
cycle $\longcycle\in\symS_n$ plays a special role:
\begin{theorem}[{\cite[Sec. 6.3]{Westbury2016}}]\label{thm:pr-rot}
  There is a basis of
  $\left(\gl_r^{\otimes n}\right)^{\GL_r}$ which is preserved by the
  action of the long cycle.  Moreover, this action is isomorphic to
  the action of promotion on the set of alternating tableaux.
\end{theorem}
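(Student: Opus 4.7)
The plan is to combine Schur--Weyl duality with Kashiwara's crystal framework to produce the basis. Identifying $\gl_r \cong V \otimes V^*$ as $\GL_r$-representations, one has
\[
\left(\gl_r^{\otimes n}\right)^{\GL_r} \cong \End_{\GL_r}(V^{\otimes n}),
\]
and $V^{\otimes n}$ carries a natural Kashiwara crystal. The space of invariants is spanned by vectors indexed by the weight-zero highest-weight elements of the crystal of $(V \otimes V^*)^{\otimes n}$; I would show that these are in bijection with paths in the $\GL_r$ weight lattice starting and ending at the empty staircase and alternating between additions and subtractions of standard basis vectors, i.e.\ with elements of $\Alt{r}{n}$ as in \cref{defn:alternating}. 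Read off position by position, the $(2i-1)^{\thsup}$ step records a box added for the $V$-factor in the $i^{\thsup}$ tensor slot and the $(2i)^{\thsup}$ step records a box removed for the paired $V^*$-factor.

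The existence of a basis \emph{actually} preserved by the long cycle is the subtle point: a naive crystal basis of $V^{\otimes n}$ is only permuted by adjacent transpositions up to lower-order terms, so rotation need not act by a signed permutation on it. I would remedy this by appealing to Lusztig's dual canonical basis, or equivalently the basis obtained from the affine $R$-matrix on the tensor product of crystals. Its elements are indexed by the same crystal highest-weight data, but it carries a compatible cyclic symmetry: the long cycle acts, after an overall scalar, as a permutation of basis vectors, and when restricted to the weight-zero invariants this becomes an honest permutation action on a set indexed by alternating tableaux of $\Alt{r}{n}$.

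Finally, to identify this permutation with promotion, I would invoke Westbury's definition of promotion on the invariant words of a finite crystal, which extracts the action of the long cycle directly from the affine crystal / $R$-matrix formalism and is constructed precisely so that it matches rotation of tensor positions. The main obstacle will be producing the rotation-invariant basis itself: pure character theory, by way of \cref{thm:Brauer}, supplies an abstract intertwining linear isomorphism but not a basis whose elements are permuted as a set by the cyclic group. Extracting such a basis requires the additional structure provided by canonical bases or, in the terminology of \cite{Kuperberg1996}, a web basis, and verifying that its permutation action matches the combinatorially defined promotion on $\Alt{r}{n}$ is where the real work lies.
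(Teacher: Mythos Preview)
The paper does not prove this theorem at all: it is quoted from \cite[Sec.~6.3]{Westbury2016} and used as a black box.  Immediately after the statement the authors write ``B.~Westbury's theorem only asserts the existence of the basis, no explicit construction is known,'' and then invoke it, together with \cref{thm:invariants-character} and \cref{thm:Brauer}, to deduce \cref{thm:existence-pr-rot}.  So there is no ``paper's own proof'' to compare your proposal against.

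That said, your sketch is a fair summary of the ingredients behind Westbury's result: the identification of highest-weight, weight-zero crystal elements of $(V\otimes V^*)^{\otimes n}$ with alternating tableaux, the use of a canonical/global basis to obtain a basis on which the long cycle acts by an honest permutation, and the matching of that permutation with crystal promotion.  You correctly flag the genuinely hard step---producing a basis that is literally permuted (not just up to lower-order terms) by cyclic rotation---and you are right that pure character theory via \cref{thm:Brauer} does not supply this.  If you want to turn this into a self-contained argument you would need to work through the affine crystal/$R$-matrix machinery in detail; within the present paper, however, the appropriate thing to do is simply to cite Westbury, as the authors do.
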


Note that B.~Westbury's theorem only asserts the existence of the basis,
no explicit construction is known.  The main result of this section
is the following refinement of his assertion.
\begin{theorem}\label{thm:existence-pr-rot}
  Let
  $\symS_n^{(r)}\coloneqq\{\pi\in\symS_n\mid \ell(\st(\pi)) \leq
  r\} = \bigcup\limits_{\substack{\lambda\vdash n\\\ell(\lambda)\leq r}}\symS_n^{\lambda}$.  Then there exists a bijection
  \[
    \Perm:\Alt{r}{n}\to\symS_n^{(r)}\quad \text{with}\quad\Perm\circ\pr= \rot\circ\Perm.
  \]
  for $1\leq r\leq n$.
\end{theorem}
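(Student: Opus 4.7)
The plan is to apply R.~Brauer's permutation lemma (\cref{thm:Brauer}) to the two cyclic group actions $\pr$ on $\Alt{r}{n}$ and $\rot$ on $\symS_n^{(r)}$. Since both are actions of the cyclic group of order $n$, an intertwining bijection exists provided the two actions have the same character as linear representations, i.e., that the numbers of fixed points of $\pr^k$ and $\rot^k$ agree for every $k$.

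For the rotation side, \cref{cor:rotationInvatiantStat} provides a statistic $\st$ preserved by $\rot$, and the cyclic sieving phenomenon $(\symS_n^\lambda, \langle \rot \rangle, f^\lambda(q)^2)$ on each fiber. Since $\symS_n^{(r)}$ is by definition the disjoint union of the $\symS_n^\lambda$ for $\ell(\lambda)\leq r$, and each $\symS_n^\lambda$ is $\rot$-stable, we obtain for any $k$ that
\[
  \#\{\sigma\in\symS_n^{(r)}:\rot^k\sigma=\sigma\}
  =\sum_{\substack{\lambda\vdash n\\\ell(\lambda)\leq r}} f^\lambda(\xi^k)^2,
\]
where $\xi$ is a primitive $n^\thsup$ root of unity.

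For the promotion side, \cref{thm:pr-rot} identifies $\pr$ on $\Alt{r}{n}$ with the action of the long cycle $\longcycle\in\symS_n$ on a basis of $(\gl_r^{\otimes n})^{\GL_r}$. By Schur--Weyl duality this space is isomorphic, as an $\symS_n$-module under conjugation, to $\bigoplus_{\lambda\vdash n,\,\ell(\lambda)\leq r}\End(S_\lambda)$, whose character at $\longcycle^k$ equals $\sum_{\lambda}\chi^\lambda(\longcycle^k)\overline{\chi^\lambda(\longcycle^k)}=\sum_{\lambda}\chi^\lambda(\longcycle^k)^2$, since $\symS_n$-characters are real-valued. The permutation $\longcycle^k$ has cycle type $((n/d)^d)$ with $d=\gcd(n,k)$, so by \cref{thm:rootEvaluationGivesCharacter} this equals $\sum_{\lambda,\ell(\lambda)\leq r} f^\lambda(\xi^d)^2$.

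It remains to reconcile the two expressions, which is the main subtle point. Both $\xi^k$ and $\xi^{\gcd(n,k)}$ are primitive $(n/\gcd(n,k))^\thsup$ roots of unity and hence Galois conjugates over $\mathbb{Q}$. Since $f^\lambda\in\setZ[q]$ and since $f^\lambda(\xi^k)$ is the trace of $\rot^k$ on $\symS_n^\lambda$ by \cref{cor:rotationInvatiantStat}, it is an integer, as is $\chi^\lambda(\longcycle^k)=f^\lambda(\xi^{\gcd(n,k)})$ by \cref{thm:rootEvaluationGivesCharacter}; two Galois-conjugate integers are equal. The two fixed-point counts therefore coincide for every $k$, so \cref{thm:Brauer} furnishes an isomorphism of cyclic group actions, which is precisely a bijection $\Perm:\Alt{r}{n}\to\symS_n^{(r)}$ with $\Perm\circ\pr=\rot\circ\Perm$. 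The well-definedness of $\Perm$ on sets also uses that both sides have size $\sum_{\ell(\lambda)\leq r}(f^\lambda)^2$, which is the $k=n$ (identity) case of the above computation.
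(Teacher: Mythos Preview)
Your argument is correct and follows the same route as the paper: compute the character of $\rot$ on $\symS_n^{(r)}$ via \cref{cor:rotationInvatiantStat}, compute the character of $\pr$ on $\Alt{r}{n}$ via \cref{thm:pr-rot} and the Schur--Weyl decomposition of \cref{thm:invariants-character}, observe that both equal $\sum_{\ell(\lambda)\le r} f^\lambda(\xi^k)^2$, and invoke Brauer's permutation lemma. Your write-up is in fact more explicit than the paper's about the fixed-point counts and the passage from $\xi^k$ to $\xi^{\gcd(n,k)}$.

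One small slip: you write that ``$f^\lambda(\xi^k)$ is the trace of $\rot^k$ on $\symS_n^\lambda$''. The cyclic sieving statement in \cref{cor:rotationInvatiantStat} has polynomial $f^\lambda(q)^2$, so the trace (that is, the number of fixed points) of $\rot^k$ on $\symS_n^\lambda$ is $f^\lambda(\xi^k)^2$, not $f^\lambda(\xi^k)$. This does not affect your conclusion: since $\rot^k$ and $\rot^{\gcd(n,k)}$ generate the same subgroup they have the same fixed points, so $f^\lambda(\xi^k)^2=f^\lambda(\xi^{\gcd(n,k)})^2$ directly; alternatively, your Galois-conjugacy argument already goes through using only that $f^\lambda(\xi^{\gcd(n,k)})=\chi^\lambda(\longcycle^k)\in\setZ$, which you correctly cite from \cref{thm:rootEvaluationGivesCharacter}.
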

\begin{remark}
  In particular, there is an injection
  $\iota:\Alt{r}{n}\to\Alt{r+1}{n}$ such that
  $\pr\iota(\alt) = \iota(\pr\alt)$.  This answers a question posed
  by S.~Pfannerer, M.~Rubey \& B.~Westbury~\cite[rmk.~3.9]{PfannererRubeyWestbury2020}.
\end{remark}

Let us remark that for large dimension such a bijection is known:
\begin{theorem}[{\cite{PfannererRubeyWestbury2020}}]\label{thm:PRW}
  For $r\geq n$, there is an (explicit) bijection
  \[
  \Perm:\Alt{r}{n}\to\symS_n\quad \text{with}\quad\Perm\circ\pr=
  \rot\circ\Perm.
  \]
\end{theorem}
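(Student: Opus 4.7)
The plan is to compare the characters of the two cyclic group actions (promotion on $\Alt{r}{n}$ and rotation on $\symS_n^{(r)}$) as linear representations of the cyclic group $\langle\longcycle\rangle$ of order $n$, and then invoke Brauer's permutation lemma (\cref{thm:Brauer}) to upgrade the resulting equality of characters to an isomorphism of group actions. The sought bijection $\Perm$ will then be any isomorphism of the two $\langle\longcycle\rangle$-sets.

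First I would compute the character of rotation on $\symS_n^{(r)}$. By \cref{cor:rotationInvatiantStat}, the statistic $\st$ is rotation-invariant, so $\symS_n^{(r)} = \bigsqcup_{\lambda\vdash n,\ \ell(\lambda)\leq r}\symS_n^\lambda$ is a $\langle\rot\rangle$-stable union, and each piece $(\symS_n^\lambda, \langle\rot\rangle, f^\lambda(q)^2)$ exhibits cyclic sieving. Summing gives that the character of $\rot$ on $\symS_n^{(r)}$ equals $\sum_{\lambda\vdash n,\ \ell(\lambda)\leq r} f^\lambda(q)^2$ modulo $q^n-1$.

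Next I would compute the character of promotion on $\Alt{r}{n}$. By \cref{thm:pr-rot}, promotion on $\Alt{r}{n}$ is isomorphic (as a cyclic group action) to the action of the long cycle $\longcycle\in\symS_n$ on the invariant space $(\gl_r^{\otimes n})^{\GL_r}$. By Schur--Weyl duality, this space decomposes as an $\symS_n$-representation as $\bigoplus_{\lambda\vdash n,\ \ell(\lambda)\leq r} S_\lambda\otimes S_\lambda$, so the value of the character at $\longcycle^d$ equals
\[
  \sum_{\substack{\lambda\vdash n\\ \ell(\lambda)\leq r}} \chi^\lambda(\longcycle^d)^2 = \sum_{\substack{\lambda\vdash n\\ \ell(\lambda)\leq r}} f^\lambda(\xi^d)^2,
\]
where the second equality is \cref{thm:rootEvaluationGivesCharacter}, applied with $\mu=\emptyset$ and using that $\longcycle^d$ has cycle type $((n/\gcd(d,n))^{\gcd(d,n)})$. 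This matches the character computed in the previous step.

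Finally, \cref{thm:Brauer} guarantees that two cyclic group actions with identical linear characters are isomorphic as group actions. Applying it to the two order-$n$ actions above yields a bijection $\Perm:\Alt{r}{n}\to\symS_n^{(r)}$ intertwining $\pr$ and $\rot$. I do not expect any genuine obstacle here, since \cref{prop:positive-cyclic-sieving} (via \cref{cor:rotationInvatiantStat}) is precisely what legitimizes the decomposition of $\rot$ on $\symS_n^{(r)}$ into the pieces indexed by the shape statistic $\st$; the only subtlety is to be sure that the character values of $\longcycle^d$ on the two sides agree for every $d$, not just for $d=1$, which is transparent from the fact that both characters are $\sum_{\ell(\lambda)\leq r} f^\lambda(q)^2$ in $\setZ[q]/(q^n-1)$.
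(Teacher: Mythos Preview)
The paper does not prove this theorem; it is quoted from \cite{PfannererRubeyWestbury2020} as an outside result, and the emphasis is on the word \emph{explicit}. Your argument does establish the existence of a bijection $\Alt{r}{n}\to\symS_n$ intertwining $\pr$ and $\rot$ when $r\geq n$ (since then $\symS_n^{(r)}=\symS_n$), but it does so via Brauer's permutation lemma, which is inherently non-constructive: it tells you two cyclic actions with the same character are isomorphic, without producing the isomorphism. So you have not proved the statement as written.

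In fact, what you have written is precisely the proof of \cref{thm:existence-pr-rot} given in the paper, specialized to the stable range $r\geq n$. The whole point of singling out \cref{thm:PRW} separately is that in that range the cited paper supplies an explicit combinatorial bijection (built from crystal-theoretic local rules), something the Brauer-lemma route cannot deliver. The paper even flags this distinction just before \cref{thm:existence-pr-rot}: ``We stress that we are unable to present such a statistic explicitly.'' So your proposal is correct as a proof of existence, but it misses the content that distinguishes \cref{thm:PRW} from the general existence statement.
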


As a first step, we compute a decomposition of the space of
invariants as a direct sum of tensor squares of Specht modules.
\begin{lemma}[\cite{RubeyWestbury2014}]\label{thm:invariants-character}
  Let $\gl_r$ be the adjoint representation of $\GL_r$, and, given a
  partition $\lambda\vdash n$, let $S_\lambda$ be the corresponding
  irreducible representation of the symmetric group.  Then there is
  an isomorphism
  \[
  \left(\gl_r^{\otimes n}\right)^{\GL_r}\cong%
  \bigoplus_{\substack{\lambda\vdash n\\\ell(\lambda)\leq r}}
  S_\lambda\otimes S_\lambda.
  \]
\end{lemma}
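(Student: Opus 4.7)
The plan is to deduce this from classical Schur--Weyl duality, via the identification $\gl_r \cong V\otimes V^*$ where $V = \setC^r$ is the defining representation of $\GL_r$. First I would rewrite
\[
  \gl_r^{\otimes n} \cong (V\otimes V^*)^{\otimes n} \cong V^{\otimes n}\otimes (V^*)^{\otimes n} \cong \End_\setC(V^{\otimes n}),
\]
and check that under this chain of isomorphisms the action of $\symS_n$ on $\gl_r^{\otimes n}$ by permuting tensor positions translates into the conjugation action $f\mapsto T_\sigma\circ f\circ T_\sigma^{-1}$ on $\End_\setC(V^{\otimes n})$, where $T_\sigma$ denotes the tensor-position permutation on $V^{\otimes n}$. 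This is the one bookkeeping step that requires care; it is essentially the same computation already used in the introduction to rewrite the invariants as $\End_{\GL_r}(V^{\otimes n})$.

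Next I would pass to $\GL_r$-invariants and invoke Schur--Weyl duality, which decomposes
\[
  V^{\otimes n} \cong \bigoplus_{\substack{\lambda\vdash n\\ \ell(\lambda)\leq r}} W_\lambda\otimes S_\lambda
\]
as a $(\GL_r,\symS_n)$-bimodule, where $W_\lambda$ is the irreducible $\GL_r$-representation of highest weight $\lambda$. Schur's lemma then gives
\[
  \left(\gl_r^{\otimes n}\right)^{\GL_r} \cong \End_{\GL_r}(V^{\otimes n}) \cong \bigoplus_{\substack{\lambda\vdash n\\ \ell(\lambda)\leq r}} \End_\setC(S_\lambda),
\]
and, crucially, under this identification the conjugation action from the previous paragraph corresponds to the conjugation action of $\symS_n$ on each summand $\End_\setC(S_\lambda)$ via the Specht module structure.

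To finish, I would rewrite $\End_\setC(S_\lambda)\cong S_\lambda\otimes S_\lambda^*$ as $\symS_n$-modules with the adjoint action. Since Specht modules over $\setC$ are self-dual ($S_\lambda^*\cong S_\lambda$), this yields $\End_\setC(S_\lambda)\cong S_\lambda\otimes S_\lambda$, and summing over $\lambda$ gives the claimed decomposition. There is no real obstacle here: the only substantive check is verifying the compatibility of the two $\symS_n$-actions in the first paragraph, which amounts to a direct calculation on pure tensors $f_1\otimes\dots\otimes f_n$ with $f_i\in\End(V)$, confirming that permutation of the tensor factors on the left matches conjugation by $T_\sigma$ on the right.
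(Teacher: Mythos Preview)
Your proposal is correct and follows essentially the same route as the paper's own proof: identify $(\gl_r^{\otimes n})^{\GL_r}\cong\End_{\GL_r}(V^{\otimes n})$, apply Schur--Weyl duality and Schur's lemma to obtain $\bigoplus_\lambda \End(S_\lambda)$, and then rewrite each summand as $S_\lambda\otimes S_\lambda$. If anything, you are slightly more careful than the paper in explicitly tracking the $\symS_n$-action as conjugation and invoking self-duality $S_\lambda^*\cong S_\lambda$, both of which the paper leaves implicit.
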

\begin{proof}
  Let $V$ be the vector representation of $\GL_r$.  Schur--Weyl
  duality asserts that there is an isomorphism
  \[
  V^{\otimes r} \cong %
  \bigoplus_{\substack{\lambda\vdash n\\\ell(\lambda)\leq r}} %
  V_\lambda\otimes S_\lambda,
  \]
  where $V_\lambda$ is an irreducible representation of $\GL_r$.

  Recall that, by Schur's lemma, $\Hom_{\GL_r}(V_\lambda, V_\mu)$
  contains only the zero map if $\lambda\neq\mu$, and all scalar
  multiples of the identity otherwise.  Thus,
  \begin{align*}
    \left(\gl_r^{\otimes n}\right)^{\GL_r} %
    &\cong \End_{\GL_r}(V^{\otimes n})\\
    &\cong \End_{\GL_r}(%
      \bigoplus_{\substack{\lambda\vdash n\\\ell(\lambda)\leq r}} %
    V_\lambda\otimes S_\lambda)\\\{\text{by Schur's Lemma}\}
    &\cong \bigoplus_{\substack{\lambda\vdash n\\\ell(\lambda)\leq r}} %
    \End(S_\lambda)\\
    &\cong \bigoplus_{\substack{\lambda\vdash n\\\ell(\lambda)\leq r}} %
    S_\lambda \otimes S_\lambda.
  \end{align*}
\end{proof}
\begin{proof}[Proof of \cref{thm:existence-pr-rot}]
  By \cref{thm:rootEvaluationGivesCharacter} and \cref{cor:rotationInvatiantStat} we obtain that the character of
  $\symS_n^{\lambda}$ equals the character of
  $\bigoplus_{\substack{\lambda\vdash n\\\ell(\lambda)\leq r}}
  S_\lambda\otimes S_\lambda\tocyclic$.
Summing over all partitions $\lambda$ of length at most $r$, we obtain the character of
  \[
\left(\gl_r^{\otimes n}\right)^{\GL_r}\tocyclic %
\cong \bigoplus_{\substack{\lambda\vdash n\\\ell(\lambda)\leq r}} %
S_\lambda\otimes S_\lambda\tocyclic,
\]
by \cref{thm:invariants-character}.
Therefore, by Brauer's permutation lemma (\cref{thm:Brauer}) and Westbury's \cref{thm:pr-rot},
the cyclic group actions
\[
(\pr, \Alt{r}{n}) \cong (\rot, \symS_n^{(r)})
\]
are isomorphic.
\end{proof}

\bibliographystyle{alphaurl}
\bibliography{bibliography}

\end{document}